\begin{document}
\newcommand{\dyle}{\displaystyle}
\newcommand{\R}{{\mathbb{R}}}
 \newcommand{\Hi}{{\mathbb H}}
\newcommand{\Ss}{{\mathbb S}}
\newcommand{\N}{{\mathbb N}}
\newcommand{\Rn}{{\mathbb{R}^n}}
\newcommand{\ieq}{\begin{equation}}
\newcommand{\eeq}{\end{equation}}
\newcommand{\ieqa}{\begin{eqnarray}}
\newcommand{\eeqa}{\end{eqnarray}}
\newcommand{\ieqas}{\begin{eqnarray*}}
\newcommand{\eeqas}{\end{eqnarray*}}
\newcommand{\Bo}{\put(260,0){\rule{2mm}{2mm}}\\}
\def\L#1{\label{#1}} \def\R#1{{\rm (\ref{#1})}}


\theoremstyle{plain}
\newtheorem{theorem}{Theorem} [section]
\newtheorem{corollary}[theorem]{Corollary}
\newtheorem{lemma}[theorem]{Lemma}
\newtheorem{proposition}[theorem]{Proposition}
\def\neweq#1{\begin{equation}\label{#1}}
\def\endeq{\end{equation}}
\def\eq#1{(\ref{#1})}

\theoremstyle{definition}
\newtheorem{definition}[theorem]{Definition}
\newtheorem{remark}[theorem]{Remark}

\numberwithin{figure}{section}
\newcommand{\res}{\mathop{\hbox{\vrule height 7pt width .5pt depth
0pt \vrule height .5pt width 6pt depth 0pt}}\nolimits}
\def\at#1{{\bf #1}: } \def\att#1#2{{\bf #1}, {\bf #2}: }
\def\attt#1#2#3{{\bf #1}, {\bf #2}, {\bf #3}: } \def\atttt#1#2#3#4{{\bf #1}, {\bf #2}, {\bf #3},{\bf #4}: }
\def\aug#1#2{\frac{\displaystyle #1}{\displaystyle #2}} \def\figura#1#2{ \begin{figure}[ht] \vspace{#1} \caption{#2}
\end{figure}} \def\B#1{\bibitem{#1}} \def\q{\int_{\Omega^\sharp}}
\def\z{\int_{B_{\bar{\rho}}}\underline{\nu}\nabla (w+K_{c})\cdot
\nabla h} \def\a{\int_{B_{\bar{\rho}}}}
\def\b{\cdot\aug{x}{\|x\|}}
\def\n{\underline{\nu}} \def\d{\int_{B_{r}}}
\def\e{\int_{B_{\rho_{j}}}} \def\LL{{\mathcal L}}
\def\itr{\mathrm{Int}\,}
\def\D{{\mathcal D}}
 \def\tg{\tilde{g}}
\def\A{{\mathcal A}}
\def\S{{\mathcal S}}
\def\H{{\mathcal H}}
\def\M{{\mathcal M}}
\def\T{{\mathcal T}}
\def\U{{\mathcal U}}
\def\N{{\mathcal N}}
\def\I{{\mathcal I}}
\def\F{{\mathcal F}}
\def\J{{\mathcal J}}
\def\E{{\mathcal E}}
\def\F{{\mathcal F}}
\def\G{{\mathcal G}}
\def\HH{{\mathcal H}}
\def\W{{\mathcal W}}
\def\H{\D^{2*}_{X}}
\def\d{d^X_M }
\def\LL{{\mathcal L}}
\def\H{{\mathcal H}}
\def\HH{{\mathcal H}}
\def\itr{\mathrm{Int}\,}
\def\vah{\mbox{var}_\Hi}
\def\vahh{\mbox{var}_\Hi^1}
\def\vax{\mbox{var}_X^1}
\def\va{\mbox{var}}
\def\SS{{\mathcal S}}
 \def\Y{{\mathcal Y}}
\def\length{{l_\Hi}}
\newcommand{\average}{{\mathchoice {\kern1ex\vcenter{\hrule
height.4pt width 6pt depth0pt} \kern-11pt} {\kern1ex\vcenter{\hrule height.4pt width 4.3pt depth0pt} \kern-7pt} {} {} }}
\def\weak{\rightharpoonup}
\def\detu{{\rm det}(D^2u)}
\def\detut{{\rm det}(D^2u(t))}
\def\detvt{{\rm det}(D^2v(t))}
\def\detv{{\rm det}(D^2v)}
\def\uuu{u_xu_yu_{xy}}
\def\uuut{u_x(t)u_y(t)u_{xy}(t)}
\def\uuus{u_x(s)u_y(s)u_{xy}(s)}
\def\uuutn{u_x(t_n)u_y(t_n)u_{xy}(t_n)}
\def\vvv{v_xv_yv_{xy}}
\newcommand{\ave}{\average\int}

\title[A 4th order parabolic PDE involving the Hessian]{Global existence versus blow-up results
for a fourth order parabolic PDE involving  the Hessian}

\author[C. Escudero, F. Gazzola, I. Peral]{Carlos Escudero, Filippo Gazzola, Ireneo Peral}
\address{}
\email{Corresponding author: Filippo Gazzola, filippo.gazzola@polimi.it}

\keywords{Epitaxial growth, higher order parabolic equations, global solutions, blow-up in finite time, potential well.
\\ \indent 2010 {\it MSC:   35J30, 35K25, 35K35, 35K55, 35G31, 35Q70.}}

\date{\today}

\begin{abstract}
We consider a partial differential equation that arises in the coarse-grained description of epitaxial growth
processes. This is a parabolic equation whose evolution is governed by the competition between the determinant
of the Hessian matrix of the solution and the biharmonic operator.
This model might present a gradient flow structure depending on the boundary conditions.
We first extend previous results on the existence of stationary solutions to this model for Dirichlet boundary conditions.
For the evolution problem we prove local existence of solutions
for arbitrary data and global existence of solutions for small data. By exploiting the boundary conditions and
the variational structure of the equation, according to the size of the data we prove finite time blow-up of the solution and/or
convergence to a stationary solution for global solutions.\par\noindent
{\bf R\'esum\'e.} On consid\`ere une \'equation diff\'erentielle qui d\'ecrit la croissance \'epitaxiale d'une couche rugueuse de fa\c{c}on macroscopique.
Il s'agit d'une \'equation parabolique pour laquelle l'\'evolution est gouvern\'ee par une comp\'etiton entre le d\'eterminant Hessien
de la solution et l'op\'erateur biharmonique. Ce mod\`ele peut pr\'esenter une structure de flux gradient suivant les conditions au bord.
On \'etend d'abord des r\'esultats pr\'ec\'edents sur l'existence de solutions stationnaires pour ce mod\`ele avec des conditions de Dirichlet.
Pour l'\'equation d'\'evolution on prouve l'existence locale de solutions pour tout donn\'e initial et l'existence globale pour des donn\'es
suffisamment petits. En exploitant les conditions au bord et la structure variationnelle de l'\'equation, suivant la taille du donn\'e initial on
d\'emontre l'explosion en temps fini et/ou la convergence \`a une solution stationnaire pour les solutions globales.
\end{abstract}
\renewcommand{\thefootnote}{\fnsymbol{footnote}}
\setcounter{footnote}{-1}
\footnote{Supported by MTM2010-18128, RYC-2011-09025, MIUR-PRIN-2008.}
\renewcommand{\thefootnote}{\arabic{footnote}}
\maketitle

\section{Introduction}

Epitaxial growth is a technique by means of which the deposition of new
material on existing layers of the same material takes place under high vacuum
conditions. It is used in the semiconductor industry
for the growth of crystalline structures that
might be composed of pure chemical elements like silicon or
germanium, or it could instead be formed by alloys like gallium arsenide or
indium phosphide. In the case of molecular beam epitaxy the deposition
is a very slow process and happens almost atom by atom.

Throughout this paper we assume that $\Omega \subset \mathbb{R}^2$ is an open, bounded smooth domain which is the place where
the deposition takes place. Although this kind of mathematical model can be studied in
any spatial dimension $N$, we will concentrate here on the
physical situation $N=2$. The macroscopic evolution of the growth process can be modeled with a partial
differential equation that is frequently proposed invoking
phenomenological and symmetry arguments~\cite{barabasi,marsili}.
The solution of such a differential equation is the function
\begin{equation}\nonumber
u: \Omega \times \mathbb{R}_+ \rightarrow \mathbb{R},
\end{equation}
describing the height of the growing interface at the spatial
location $x \in \Omega$ at the temporal instant $t \in\mathbb{R}_+:=[0,\infty)$. A fundamental modeling assumption in this field is considering
that the physical interface can be described as the graph of $u$, and this is a valid hypothesis
in an important number of cases~\cite{barabasi}.

One of the most widespread examples of this type of theory is the Kardar-Parisi-Zhang equation~\cite{kpz}
\begin{equation}\nonumber
u_t = \nu \Delta u + \gamma |\nabla u|^2 + \eta(x,t),
\end{equation}
which has been extensively studied in the physical literature and
has also been investigated for its interesting
mathematical properties~\cite{ireneo1,ireneo2,BSW,BMP,GGK}. On the other hand, it has been argued
that epitaxial growth processes should be described by
a different equation coming from a conservation law and, in particular,
the term $|\nabla u|^2$ should not be present in such a model~\cite{barabasi}. An equation
fulfilling these properties is the conservative counterpart of the Kardar-Parisi-Zhang
equation~\cite{vlds,sun,villain}
\begin{equation}
\label{ssg} u_t = -\mu \Delta^2 u + \kappa \Delta |\nabla u|^2 +
\zeta(x,t).
\end{equation}
This equation is conservative in the sense that the mean value
$\int_\Omega u \, dx$ is constant if boundary conditions
that isolate the system are used. It can also be considered as a higher order
counterpart of the Kardar-Parisi-Zhang equation. In recent years, much attention has been devoted to other models of epitaxial growth,
see \cite{grasselli,kohn,kohn1,li, winkler} and references therein.

Herein we will consider a different model obtained by means of the variational formulation
developed in~\cite{marsili} and aimed at unifying previous approaches. We skip the detailed derivation
of our model, that can be found in~\cite{n2}, and move to the resulting equation, that reads
\begin{equation}
\nonumber u_t = 2 \, K_1 \, \det \left( D^2 u \right) -
K_2 \, \Delta^2 u + \xi(x,t).
\end{equation}
This partial differential equation can be thought of as an
analogue of equation~(\ref{ssg}); in fact, they are identical from a strict
dimensional analysis viewpoint. Let us also note that this model has been shown
to constitute a suitable description of epitaxial growth in the same sense as equation (\ref{ssg}), and it even
displays more intuitive geometric properties~\cite{escudero,elka}.
The constants $K_1$ and $K_2$ will be rescaled in the following.

In this work we are interested in the following initial-boundary value problem:
\begin{equation}\label{parabolicpdef}
\left\{\begin{array}{ll}
u_t + \Delta^2 u = \det(D^2 u) + \lambda f \qquad & x \in \Omega\, ,\ t>0\, ,\\
u(x,0)=u_0(x), \qquad & x \in \Omega\, ,\\
\text{boundary conditions}\qquad & x \in \partial\Omega\, ,\ t>0\, ,
\end{array}\right.\end{equation}
where $f$ is some function possibly depending on both space and time coordinates and belonging to some Lebesgue space, $\lambda \in \mathbb{R}$.
The initial condition $u_0(x)$ is also assumed to belong to some Sobolev space.
We will consider the following sets of boundary conditions
\begin{equation}\nonumber
u = u_\nu = 0, \qquad x \in \partial \Omega,
\end{equation}
which we will refer to as Dirichlet boundary conditions, and
\begin{equation}\nonumber
u = \Delta u = 0, \qquad x \in \partial \Omega,
\end{equation}
which we will refer to as Navier boundary conditions.
We note that the stationary solutions to this model were studied before~\cite{n2,n3,n1}.\par
For the evolution problem \eq{parabolicpdef} we prove existence of a solution, both for arbitrary time intervals and small data, and for arbitrary
data and small time intervals. Then using several tools from both critical point theory and potential well techniques, we prove the existence of
finite time blow up solutions as well as the existence of global in time solutions, in suitable functional spaces. The use of these tools is by
far nontrivial both because the nonlinearity occurs in the second order derivatives and because more regularity is necessary to overcome some
delicate technical points.\par
This paper is organized as follows. In Section \ref{stationaryproblem} we extend previous results in~\cite{n1} concerning the stationary problem
with Dirichlet boundary conditions and characterize the geometry of the functional that allows the variational
treatment of this problem. In Section \ref{parabolicwithsource} we build the existence theory for the parabolic problem with both sets of boundary
conditions and the presence of a source term.
Section \ref{parabolicwithoutsource} is devoted to the analysis of the long time behavior and the blow-up in finite time of
the solutions to the Dirichlet problem in the absence of a source term;
this analysis is carried out taking advantage of the gradient flow structure of the equation in this case and of the so-called potential well techniques.
Finally, in Section \ref{open} we present some further results, including the proof of finite time blow-up of the solutions to the Navier problem for large enough
initial conditions, and propose some open questions.

\section{The stationary problem}\label{stationaryproblem}

\subsection{Existence of solutions with Dirichlet conditions.}
In the sequel, we need several different norms. All the norms in $W^{s,p}$-spaces will be reported explicitly (that is, $\|\cdot\|_{W^{s,p}(\Omega)}$) except
for the $L^p$-norm and the $W^{2,2}_0$-norm, respectively denoted by
$$(1\le p<\infty)\quad \|u\|_p^p=\int_\Omega|u|^p\ ,\qquad\|u\|_\infty={\rm ess}\sup_{x\in\Omega}|u(x)|\ ,$$
$$\|u\|^2=\|\Delta u\|_2^2=\int_\Omega|\Delta u|^2\ ,$$

We start by focusing on the following nonhomogeneous problem
\begin{equation}\label{nonhomogeneous}
\left\{
\begin{array}{ll}
\Delta^2 u = \det (D^2 u) + f \quad & \mbox{in }\Omega\\
u=g \quad & \mbox{on }\partial\Omega \\
u_\nu = h \quad & \mbox{on }\partial\Omega
\end{array}
\right. ,
\end{equation}
where $f \in L^1(\Omega)$, $g\in W^{\frac32,2}(\partial\Omega)$, $h\in W^{\frac12,2}(\partial\Omega)$. The following result holds.

\begin{theorem}\label{nonhDir}
There exists $\gamma>0$ such that if
\begin{equation}\label{gamma}
\|f\|_1+\|g\|_{W^{3/2,2}(\partial\Omega)}+\|h\|_{W^{1/2,2}(\partial\Omega)}<\gamma
\end{equation}
then \eqref{nonhomogeneous} admits at least two weak solutions in $W^{2,2}(\Omega)$, a stable solution and a mountain pass solution.
\end{theorem}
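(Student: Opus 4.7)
The plan is to reduce the nonhomogeneous problem \eqref{nonhomogeneous} to a perturbed semilinear problem with homogeneous Dirichlet conditions, and then apply classical critical point theory on $W^{2,2}_0(\Omega)$. First I pick $\phi\in W^{2,2}(\Omega)$ solving the biharmonic Dirichlet problem $\Delta^2\phi=0$ in $\Omega$, $\phi|_{\partial\Omega}=g$, $\phi_\nu|_{\partial\Omega}=h$, with $\|\phi\|_{W^{2,2}(\Omega)}\le C(\|g\|_{W^{3/2,2}(\partial\Omega)}+\|h\|_{W^{1/2,2}(\partial\Omega)})$. Writing $u=v+\phi$ with $v\in W^{2,2}_0(\Omega)$ and expanding the $2\times 2$ Hessian determinant via its bilinear polarization, $\det(D^2(v+\phi))=\det(D^2v)+Q_\phi(v)+\det(D^2\phi)$ with $Q_\phi(v):=v_{xx}\phi_{yy}+v_{yy}\phi_{xx}-2v_{xy}\phi_{xy}$, the problem becomes
$$\Delta^2v=\det(D^2v)+Q_\phi(v)+F\quad\text{in }\Omega,\qquad v=v_\nu=0\quad\text{on }\partial\Omega,$$
with $F:=f+\det(D^2\phi)\in L^1(\Omega)$.

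Weak solutions of the reduced problem are the critical points of
$$J(v):=\tfrac12\|v\|^2-\tfrac13\int_\Omega v\det(D^2v)\,dx-\tfrac12\int_\Omega v\,Q_\phi(v)\,dx-\int_\Omega Fv\,dx$$
on $W^{2,2}_0(\Omega)$. The cubic term is of class $C^\infty$ with derivative $w\mapsto\int w\det(D^2v)\,dx$; this relies on the symmetry of the associated trilinear form, which in turn follows from the divergence identity $\det(D^2v)=\partial_x(v_xv_{yy})-\partial_y(v_xv_{xy})$ specific to dimension two. Using the Sobolev embedding $W^{2,2}_0(\Omega)\hookrightarrow C^0(\bar\Omega)$ (valid in 2D since $W^{2,2}\hookrightarrow W^{1,p}$ for every $p<\infty$, followed by Morrey for $p>2$) together with H\"older, all nonlinear terms are bounded by $C\|v\|^3$, $C\|\phi\|_{W^{2,2}}\|v\|^2$, $C\|\phi\|_{W^{2,2}}^2\|v\|$ and $C\|f\|_1\|v\|$, respectively. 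Moreover, the subquadratic part of $J$ is weakly continuous by compactness of these embeddings, so $J$ is weakly lower semicontinuous.

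Combining the bounds gives $J(v)\ge\tfrac14\|v\|^2-C\|v\|^3-C'\gamma\|v\|$ once $\gamma$ in \eqref{gamma} is small enough to absorb the factor $\|\phi\|_{W^{2,2}}\le C\gamma$. For a suitable small radius $\rho$ and further smallness of $\gamma$ one gets $\inf_{\|v\|=\rho}J\ge\delta>0$, while $J(0)=0$. Minimizing the weakly lower semicontinuous $J$ on the weakly compact ball $\{\|v\|\le\rho\}$ produces a minimizer $v_0$ with $\|v_0\|<\rho$, hence an interior critical point: this is the stable solution. For the mountain pass solution I exhibit $\bar v\in W^{2,2}_0(\Omega)$ with $\int\bar v\det(D^2\bar v)>0$, for instance a smooth positive radial bump strictly concave near its maximum, where $\det D^2\bar v>0$ dominates the integral. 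Then $J(t\bar v)\to-\infty$ as $t\to+\infty$, so some $v_1=t_*\bar v$ satisfies $\|v_1\|>\rho$ and $J(v_1)<0$, completing the mountain pass geometry between $v_0$ and $v_1$.

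It remains to verify the Palais--Smale condition. The algebraic identity
$$\tfrac16\|v\|^2=J(v)-\tfrac13\langle J'(v),v\rangle+\tfrac16\int_\Omega v\,Q_\phi(v)\,dx+\tfrac23\int_\Omega Fv\,dx$$
together with the bounds above and smallness of $\gamma$ yields a uniform $W^{2,2}_0$-bound on any PS sequence $(v_n)$. Extracting $v_n\weak v$ and testing $J'(v_n)\to 0$ against $v_n-v$, the compact embedding kills all subquadratic contributions and leaves $\int_\Omega\Delta v_n\,\Delta(v_n-v)\,dx\to 0$; hence $\|v_n\|\to\|v\|$ and $v_n\to v$ strongly in $W^{2,2}_0$. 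The Ambrosetti--Rabinowitz mountain pass theorem then produces a second critical point $v_*\ne v_0$ at level $c\ge\delta>0$. The main obstacle throughout is the sign-indefinite cubic Hessian nonlinearity, so that neither the local coercivity near $0$ nor the boundedness of PS sequences is automatic: both rely crucially on the smallness condition \eqref{gamma}, which enters through the Pohozaev-type identity above and makes the perturbative picture work.
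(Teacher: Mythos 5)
Your proposal is correct and follows essentially the same route as the paper: subtract a biharmonic lift of the data to reduce \eqref{nonhomogeneous} to a homogeneous Dirichlet problem, expand the Hessian determinant by polarization, and exploit the resulting variational structure, with the smallness condition \eqref{gamma} guaranteeing the mountain pass geometry that yields a local minimizer and a mountain pass critical point. The only differences are cosmetic: the paper's lift solves $\Delta^2 v = f$ with the boundary data (so $f$ disappears from the reduced equation) while you keep $F=f+\det(D^2\phi)$ as an $L^1$ source, and you spell out the Palais--Smale verification and the weak lower semicontinuity argument (where, note, the cubic term $\int_\Omega v\det(D^2v)$ is itself weakly continuous by the compact embedding $W^{2,2}_0(\Omega)\subset W^{1,4}_0(\Omega)$, not merely the subquadratic part) that the paper leaves to the references.
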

\begin{proof} Consider the auxiliary linear problem
\begin{equation}\label{v}
\left\{
\begin{array}{ll}
\Delta^2 v = f \quad & \mbox{in }\Omega\\
v=g \quad & \mbox{on }\partial\Omega\\
v_\nu = h\quad & \mbox{on }\partial\Omega
\end{array}
\right. .
\end{equation}
In view of the embedding $L^1(\Omega)\subset W^{-2,2}(\Omega)$, \cite[Theorem 2.16]{ggs} tells us that \eqref{v} admits a unique weak solution $v\in W^{2,2}(\Omega)$
which satisfies
\begin{equation}\label{estimate}
\|D^2 v\|_2\le C\Big(\|f\|_1+\|g\|_{W^{3/2,2}(\partial\Omega)}+\|h\|_{W^{1/2,2}(\partial\Omega)}\Big)
\end{equation}
for some $C>0$ independent of $f$, $g$, $h$. Subtracting \eqref{v} from \eqref{nonhomogeneous} and putting $w=u-v$ we get
$$
\left\{
\begin{array}{ll}
\Delta^2 w = \det [D^2 (w + v)] \quad & \mbox{in }\Omega\\
w=w_\nu = 0\quad & \mbox{on }\partial\Omega
\end{array}
\right. .
$$
This problem can be written as
\begin{equation}\label{w1}
\left\{
\begin{array}{lll}
\Delta^2 w\! =\! \det (D^2 w)\! +\! \det (D^2 v)\! +\! v_{xx} w_{yy}\! +\! w_{xx} v_{yy}\! -\! 2 w_{xy} v_{xy}\ & \mbox{in }\Omega \\
w=w_\nu = 0\ & \mbox{on }\partial\Omega
\end{array}
\right. .
\end{equation}

By combining results from \cite{coifman,D,muller}, Escudero-Peral \cite{n1} proved that for all $u\in W^{2,2}_0(\Omega)$ one has
that $\detu$ belongs to the Hardy space and that
$$\detu=\Big(u_xu_{yy}\Big)_x-\Big(u_xu_{xy}\Big)_y=\Big(u_xu_y\Big)_{xy}-\frac{1}{2}\Big(u_y^2\Big)_{xx}-\frac{1}{2}\Big(u_x^2\Big)_{yy}$$
in $\mathcal{D}'(\Omega)$. Moreover,
\begin{equation}\label{vdetv}
\int_\Omega u\, \detu=3\int_\Omega\uuu\quad \forall u\in W^{2,2}_0(\Omega).
\end{equation}
These facts show that \eqref{w1} admits a variational formulation. The corresponding functional reads
$$
K(w)=\int_\Omega\left[ \frac{|\Delta w|^2}{2}\!-\! w_x w_y w_{xy}\! -\! \det(D^2 v) w\! +\! \frac{w^2_y v_{xx}}{2}\! +\! \frac{w_x^2 v_{yy}}{2}\!
-\! w_x w_y v_{xy} \right].
$$
Note that, by the embedding $W^{2,2}_0(\Omega)\subset W^{1,4}_0(\Omega)$, we have
$$
K(w) \ge - \int_\Omega \left[ w_x w_y w_{xy} + \det(D^2 v) w \right] + \frac{1}{2} \|\Delta w\|_2^2 - C \|D^2v\|_2 \|\Delta w\|_2^2,
$$
so a mountain pass geometry \cite{ar} is ensured for small enough $\|D^2v\|_2$. In view of \eqref{estimate}, the mountain pass geometry is ensured if $\gamma$
in \eqref{gamma} is sufficiently small. This geometry yields the existence of a locally minimum solution and of a mountain pass solution.\end{proof}

Theorem \ref{nonhDir} generalizes the following statement proved in \cite{n1}:

\begin{corollary}
The Dirichlet problem
\begin{equation}\label{eqtnstat}
\left\{\begin{array}{ll}
\Delta^2u=\detu\quad & \mbox{in }\Omega\\
u=u_\nu=0\quad & \mbox{on }\partial\Omega
\end{array}\right.
\end{equation}
admits a nontrivial weak solution $u\in W^{2,2}_0(\Omega)$.
\end{corollary}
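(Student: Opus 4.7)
The plan is to apply Theorem \ref{nonhDir} in the special case of vanishing data $f\equiv 0$, $g\equiv 0$, $h\equiv 0$. Then condition \eqref{gamma} is trivially satisfied, and the auxiliary linear problem \eqref{v} has the unique solution $v\equiv 0$, so that the change of variables $w=u-v$ reduces problem \eqref{eqtnstat} to itself and the associated functional becomes
$$
K(w)=\int_\Omega \left[\frac{|\Delta w|^2}{2}-w_x w_y w_{xy}\right],\qquad w\in W^{2,2}_0(\Omega).
$$
Theorem \ref{nonhDir} then yields two weak solutions $u_1,u_2\in W^{2,2}_0(\Omega)$ of \eqref{eqtnstat}, one a local minimum of $K$ near the origin and the other obtained via the mountain pass theorem.

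The key point is to argue that the mountain pass solution must be nontrivial. First I would observe that $u\equiv 0$ is trivially a solution of \eqref{eqtnstat}, and that thanks to the embedding $W^{2,2}_0(\Omega)\subset W^{1,4}(\Omega)$ one has the pointwise estimate
$$
\left|\int_\Omega w_x w_y w_{xy}\right|\le C\,\|\Delta w\|_2^3,
$$
so that $K(w)\ge \tfrac12\|\Delta w\|_2^2-C\|\Delta w\|_2^3$. Consequently there exist $\rho>0$ and $\alpha>0$ such that $K(w)\ge\alpha$ whenever $\|\Delta w\|_2=\rho$; in particular $K(0)=0<\alpha$. Next I would exhibit some $\bar w\in W^{2,2}_0(\Omega)$ with $\|\Delta \bar w\|_2>\rho$ and $K(\bar w)<0$: by \eqref{vdetv} one can pick any $w_0\in W^{2,2}_0(\Omega)$ with $\int_\Omega w_0 \det(D^2 w_0)\neq 0$ (after possibly changing sign, this integral is $3\int_\Omega (w_0)_x(w_0)_y(w_0)_{xy}>0$), and observe that
$$
K(tw_0)=\frac{t^2}{2}\|\Delta w_0\|_2^2-\frac{t^3}{3}\int_\Omega w_0\det(D^2 w_0)\ \to\ -\infty
$$
as $t\to+\infty$, so $\bar w=tw_0$ works for $t$ large. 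This provides the standard mountain pass geometry with critical level $c\ge\alpha>0$.

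The main obstacle is the compactness needed to extract an actual critical point at the mountain pass level, i.e.\ verifying a Palais--Smale (or Cerami) condition for $K$ in $W^{2,2}_0(\Omega)$. Since the nonlinearity is cubic in the second derivatives and $W^{2,2}_0(\Omega)$ embeds only into $W^{1,4}_0(\Omega)$ in dimension two, this is a delicate borderline situation; however this verification is precisely what is carried out in \cite{n1}, and once granted the mountain pass theorem of Ambrosetti--Rabinowitz produces a critical point $u\in W^{2,2}_0(\Omega)$ of $K$ at level $c>0=K(0)$, hence nontrivial. This $u$ is the desired nontrivial weak solution of \eqref{eqtnstat}.
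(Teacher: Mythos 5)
Your proposal is correct and follows essentially the same route as the paper: the corollary is presented there precisely as the zero-data case ($f=g=h\equiv 0$) of Theorem \ref{nonhDir}, with the functional reducing to $J$ and the delicate compactness/mountain-pass machinery deferred to \cite{n1}, exactly as you do. Your explicit check that the mountain pass level is strictly positive while $K(0)=0$ (so the mountain pass critical point cannot be the trivial solution) is the right way to extract \emph{nontriviality} from the two-solution statement, and is consistent with the paper's treatment.
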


Concerning the regularity of solutions, we have the following statement.

\begin{theorem}\label{regularity}
Assume that, for some integer $k\ge0$ we have: $\partial\Omega\in C^{k+4}$, $f \in W^{k,2}(\Omega)$, $g\in W^{k+7/2,2}(\partial\Omega)$,
$h\in W^{k+5/2,2}(\partial\Omega)$. Then any solution to \eqref{nonhomogeneous} satisfies
$$u\in W^{k+4,2}(\Omega)\ .$$
In particular, any solution to \eqref{eqtnstat} is as smooth as the boundary permits.
\end{theorem}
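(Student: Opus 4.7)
The plan is to argue by induction on $k\ge 0$, with the base case $k=0$ as the delicate step and the inductive step following from the algebra property of Sobolev spaces in dimension two.

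\textbf{Base case.} Given a weak solution $u\in W^{2,2}(\Omega)$ of \eqref{nonhomogeneous} with $f\in L^2$, $g\in W^{7/2,2}(\partial\Omega)$, $h\in W^{5/2,2}(\partial\Omega)$ and $\partial\Omega\in C^4$, I want to show $u\in W^{4,2}(\Omega)$. Directly, $D^2u$ is only in $L^2$ and $L^2$ is not an algebra, so $\det(D^2u)$ is a priori only in $L^1$ and the standard $L^2$-biharmonic theory does not immediately apply. The key is the divergence-form identity recalled in the proof of Theorem \ref{nonhDir},
\[
\det(D^2u)=\partial_x(u_xu_{yy})-\partial_y(u_xu_{xy}),
\]
which allows me to rewrite the equation as $\Delta^2 u=\mathrm{div}\,V+f$ with $V=(u_xu_{yy},-u_xu_{xy})$. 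Because $N=2$, Sobolev embedding gives $u_x,u_y\in W^{1,2}\hookrightarrow L^p$ for every $p<\infty$; combined with $u_{xy},u_{yy}\in L^2$ this yields $V\in L^{p_0}$ for any $p_0<2$. Applying the $L^{p_0}$-theory for the biharmonic operator with divergence-form right-hand side and the prescribed boundary data (which embed into the required trace spaces $W^{3-1/p_0,p_0}$ and $W^{2-1/p_0,p_0}$) produces $u\in W^{3,p_0}(\Omega)$. A second two-dimensional Sobolev embedding $W^{3,p_0}\hookrightarrow W^{2,q}$ with $q=2p_0/(2-p_0)$ arbitrarily large then puts $D^2u\in L^q$, so that $\det(D^2u)\in L^{q/2}\subset L^2$. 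The standard $L^2$-theory for the biharmonic operator closes the base case: $u\in W^{4,2}(\Omega)$.

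\textbf{Inductive step.} Suppose the theorem is proved at index $k$ and that the data satisfy the hypotheses at index $k+1$. These data are a fortiori admissible at index $k$, so by the inductive hypothesis $u\in W^{k+4,2}(\Omega)$. In dimension two $W^{s,2}(\Omega)$ is a Banach algebra for every $s>1$, so in particular $W^{k+2,2}(\Omega)$ is an algebra; hence $D^2u\in W^{k+2,2}$ gives $\det(D^2u)\in W^{k+2,2}\subset W^{k+1,2}$. The right-hand side $\det(D^2u)+f$ therefore lies in $W^{k+1,2}$, and linear biharmonic regularity with $\partial\Omega\in C^{k+5}$ and boundary data in $W^{k+9/2,2}(\partial\Omega)$ and $W^{k+7/2,2}(\partial\Omega)$ yields $u\in W^{k+5,2}(\Omega)$. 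The final assertion, that every solution of \eqref{eqtnstat} is as smooth as the boundary permits, follows by applying the theorem for each $k$, since there $f=g=h=0$.

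The main obstacle is the base case. The borderline character of $W^{2,2}$ in $\mathbb{R}^2$ prevents a single-step bootstrap — the algebra property does not help the $L^2$-valued second derivatives themselves — and one must exploit the divergence structure of $\det(D^2u)$ to reach $W^{3,p_0}$ with $p_0$ slightly less than $2$; a second Sobolev embedding then bridges the integrability gap to $L^2$, after which the algebra property of $W^{s,2}$ for $s\ge 2$ makes the induction routine.
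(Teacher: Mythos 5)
Your proof is correct, and its crucial first step is genuinely different from the paper's. The paper ignores the structure of $\det(D^2u)$ beyond its membership in $L^1(\Omega)$: by duality from the embedding $W^{s,2}_0(\Omega)\subset L^\infty(\Omega)$ (valid for $s>1$ in dimension two) one gets $L^1(\Omega)\subset W^{-s,2}(\Omega)$ for all $s>1$, and elliptic regularity in the $L^2$-based scale then gives $u\in W^{r,2}(\Omega)$ for every $r<3$. You instead exploit the divergence form $\det(D^2u)=\mathrm{div}\,(u_xu_{yy},-u_xu_{xy})\in W^{-1,p_0}(\Omega)$ for all $p_0<2$ and invoke $L^{p_0}$-theory to reach $u\in W^{3,p_0}(\Omega)$. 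The two intermediate outputs are dual deficiencies --- just under three derivatives in $L^2$ versus three full derivatives just under $L^2$ integrability --- and both yield $D^2u\in L^q(\Omega)$ for all $q<\infty$ by planar Sobolev embedding, after which the arguments merge ($\det(D^2u)\in L^2$, then $L^2$-theory, then higher-order bootstrap). What the paper's route buys is economy of tools: it stays in the Hilbert scale where the needed estimates are exactly those of \cite{ggs}, and it uses nothing about the determinant except its $L^1$ bound. What your route buys: the divergence identity is already available in the paper (it underlies the variational formulation of Theorem \ref{nonhDir}), and your write-up is complete precisely where the paper is terse --- its closing sentence ``a bootstrap argument and elliptic regularity then allow to conclude'' is exactly your induction on $k$ via the algebra property of $W^{k+2,2}(\Omega)$ in two dimensions, with the correct trace spaces $W^{k+9/2,2}(\partial\Omega)$, $W^{k+7/2,2}(\partial\Omega)$ at each stage; the price is that $W^{-1,p}$-data elliptic theory for the Dirichlet biharmonic problem is a slightly heavier citation (Agmon--Douglis--Nirenberg plus duality) than the Hilbert-scale result. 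Two small points you should make explicit: the identity $\det(D^2u)=(u_xu_{yy})_x-(u_xu_{xy})_y$ is stated in the paper for $u\in W^{2,2}_0(\Omega)$, but it is local and extends to every $u\in W^{2,2}(\Omega)$ by density, so it does apply to solutions of the nonhomogeneous problem \eqref{nonhomogeneous}; and, as in any bootstrap, the given weak solution must be identified with the regular solution of the linear problem with the same data, which follows from uniqueness of $W^{2,2}$ weak solutions since $W^{3,p_0}(\Omega)\subset W^{2,2}(\Omega)$ for $p_0>1$.
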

\begin{proof} By duality, from the embedding $W^{s,2}_0(\Omega)\subset L^\infty(\Omega)$ we infer that
$L^1(\Omega)\subset[L^\infty(\Omega)]'\subset W^{-s,2}(\Omega)$ for all $s>1$. Therefore, for any solution $u\in W^{2,2}(\Omega)$ to \eqref{nonhomogeneous}
we have $\detu\in W^{-s,2}(\Omega)$ for all $s>1$. Therefore, even if $k=0$, we have $\Delta^2u\in W^{-s,2}(\Omega)$ and, in turn, $u\in W^{r,2}(\Omega)$
for any $r<3$. A bootstrap argument and elliptic regularity then allow to conclude.\end{proof}

\begin{remark} If we stop the previous proof at the first step, we see that, in a $C^3$ domain, any solution to
$$
\left\{\begin{array}{ll}
\Delta^2u=\detu +f\quad & \mbox{in }\Omega\\
u=u_\nu=0\quad & \mbox{on }\partial\Omega
\end{array}\right.
$$
with $f\in L^1(\Omega)$ belongs to $W^{r,2}(\Omega)$ for any $r<3$, which slightly improves the result in \cite{n1}. Note also that these arguments
take strong advantage of being in planar domains.\end{remark}

\subsection{The Nehari manifold and the mountain pass level}

The energy functional for the stationary problem (\ref{eqtnstat}) is
\begin{equation}\label{J}
J(v)=\frac{1}{2}\int_\Omega|\Delta v|^2-\int_\Omega\vvv\qquad\forall v\in W^{2,2}_0(\Omega) .
\end{equation}
It is shown in \cite{n1} that $J$ has a mountain pass geometry and that the corresponding mountain pass level is given by
\begin{equation}\label{d}
d\ =\ \inf_{\gamma\in\Gamma}\ \max_{0\le s\le1}\ J(\gamma(s))
\end{equation}
where $\Gamma:=\{\gamma\in C([0,1],W^{2,2}_0(\Omega));\, \gamma(0)=0,\, J(\gamma(1))<0\}$.
We aim to characterize differently $d$ and to relate it with the so-called Nehari manifold defined by
$$\mathcal{N}:=\Big\{v\in W^{2,2}_0(\Omega)\setminus\{0\};\, \langle J'(v),v\rangle=\|v\|^2-3\int_\Omega\vvv=0\Big\}$$
where $\langle\cdot,\cdot\rangle$ denotes the duality pairing between $W^{2,-2}(\Omega)$ and $W^{2,2}_0(\Omega)$.
To this end, we introduce the set
\begin{equation}\label{B}
B:=\{v\in  W^{2,2}_0(\Omega);\, \int_\Omega\vvv=1\}\, .
\end{equation}
It is clear that $v\in \mathcal{N}$ if and only if $\alpha v\in B$ for some $\alpha>0$.
In particular, not on all the straight directions starting from 0 in the
phase space $W^{2,2}_0(\Omega)$ there exists an intersection with $\mathcal{N}$. Hence, $\mathcal{N}$ is an unbounded manifold (of codimension 1) which separates the two regions
$$\mathcal{N}_+=\Big\{v\in W^{2,2}_0(\Omega);\, \|v\|^2>3\int_\Omega\vvv\Big\}$$ and
$$\mathcal{N}_-=\Big\{v\in W^{2,2}_0(\Omega);\, \|v\|^2<3\int_\Omega \vvv\Big\}\ .$$

The next result states some properties of $\mathcal{N}_\pm$.

\begin{theorem}\label{NN}
Let $v\in W^{2,2}_0(\Omega)$, then the following implications hold:\par
(i) $0<\|v\|^2<6d\ \Longrightarrow\ v\in \mathcal{N}_+$;\par
(ii) $v\in \mathcal{N}_+,\ J(v)<d\ \Longrightarrow\ 0<\|v\|^2<6d$;\par
(iii) $v\in\mathcal{ N}_- \Longrightarrow\ \|v\|^2 >6d$.
\end{theorem}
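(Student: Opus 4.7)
The plan is to use the one-dimensional fiber map $\phi_v(t):=J(tv)=\frac{t^2}{2}\|v\|^2-t^3\int_\Omega\vvv$ together with the Nehari characterization $d=\inf_{\mathcal N}J$ of the mountain pass level. A quick computation shows that if $\int_\Omega\vvv>0$ then $\phi_v$ attains a unique positive maximum at $t^\ast(v)=\|v\|^2/(3\int_\Omega\vvv)$, with $t^\ast(v)v\in\mathcal N$ and
$$J(t^\ast(v)v)=\frac{(t^\ast(v))^2}{6}\|v\|^2;$$
in particular $J(w)=\frac16\|w\|^2$ for every $w\in\mathcal N$.

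Before attacking the three implications I would establish the identity $d=\inf_{\mathcal N}J$. For the inequality $d\le\inf_{\mathcal N}J$, given $w\in\mathcal N$ note that $\int_\Omega\vvv=\frac13\|w\|^2>0$, so $\phi_w(t)\to-\infty$ as $t\to+\infty$, and the ray $s\mapsto sRw$ is an admissible path in $\Gamma$ once $R$ is large; since $t^\ast(w)=1$, the maximum of $J$ along this ray equals $J(w)$. For the reverse inequality I would use the topological crossing argument: on a small ball of $W^{2,2}_0(\Omega)$ the Sobolev embedding $W^{2,2}_0\subset W^{1,4}_0$ yields $|\int_\Omega\vvv|\le C\|v\|^3$, so small $v$ lie in $\mathcal N_+$, while any $\gamma\in\Gamma$ ends at a point where $J<0$, which forces $\int_\Omega\vvv>\frac12\|v\|^2>\frac13\|v\|^2$, hence $\gamma(1)\in\mathcal N_-$; continuity then gives an $s_0$ with $\gamma(s_0)\in\mathcal N$ and $\max J\circ\gamma\ge J(\gamma(s_0))\ge\inf_{\mathcal N}J$.

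With this in hand each implication is short. Statement (ii) is purely algebraic: $v\in\mathcal N_+$ means $\int_\Omega\vvv<\frac13\|v\|^2$, so $J(v)>\frac16\|v\|^2$, and the hypothesis $J(v)<d$ gives $\|v\|^2<6J(v)<6d$. For (i) I would argue by contradiction. If $v\notin\mathcal N_+$ with $0<\|v\|^2<6d$, either $v\in\mathcal N$ (and then $J(v)=\frac16\|v\|^2<d$ contradicts $d=\inf_{\mathcal N}J$), or $v\in\mathcal N_-$, in which case $\int_\Omega\vvv>\frac13\|v\|^2>0$, the fiber maximizer satisfies $t^\ast(v)<1$, $t^\ast(v)v\in\mathcal N$, and $J(t^\ast(v)v)=\frac{(t^\ast(v))^2}{6}\|v\|^2<\frac{\|v\|^2}{6}<d$, again contradicting the Nehari characterization. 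Finally, (iii) follows from the same fiber-map inequality: when $v\in\mathcal N_-$ one has $t^\ast(v)<1$ and $t^\ast(v)v\in\mathcal N$, so
$$6d\;\le\;6J(t^\ast(v)v)\;=\;(t^\ast(v))^2\|v\|^2\;<\;\|v\|^2.$$

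The only genuinely non-trivial step is the Nehari identity $d=\inf_{\mathcal N}J$, and within that the delicate half is $d\ge\inf_{\mathcal N}J$, where one must make sure that the sub-cubic Sobolev estimate on $\int_\Omega\vvv$ really places a whole neighborhood of the origin inside $\mathcal N_+$ so that the crossing argument applies. Once this is secured, the three assertions of the theorem are obtained by elementary manipulations of the defining inequalities of $\mathcal N_\pm$.
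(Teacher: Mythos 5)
Your proposal is correct and follows essentially the same route as the paper: both rest on the Nehari characterization $d=\min_{\mathcal{N}}J$ (equivalently $d=\min_{\mathcal{N}}\|v\|^2/6$, since $J(w)=\tfrac16\|w\|^2$ on $\mathcal{N}$) and then deduce (i)--(iii) from the defining inequalities of $\mathcal{N}_\pm$. The only differences are that the paper cites $d=\min_{\mathcal{N}}J$ as well-known (Ambrosetti--Rabinowitz) where you supply a proof via the fiber-map and crossing argument --- correctly flagging that $\mathcal{N}$ must be bounded away from $0$ for the crossing to land at a nonzero point --- and that your scaling step $t^{\ast}(v)<1$ yields the strict inequality in (iii) directly, whereas the paper deduces (iii) from (i).
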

\begin{proof} It is well-known \cite{ar} that the mountain pass level $d$ may also be defined by
\begin{equation}\label{mpnehari}
d\ =\ \min_{v\in\mathcal{N}}\ J(v)\ .
\end{equation}
Using \eqref{mpnehari} and the definition of $\mathcal{N}$ we obtain
$$d\ =\ \min_{v\in \mathcal{N}}\ J(v)\ =\ \min_{v\in \mathcal{N}}\ \left(\frac{\|v\|^2}{2}-\int_\Omega\vvv\right)\
=\ \min_{v\in \mathcal{N}}\ \frac{\|v\|^2}{6}$$
which proves (i) since $\mathcal{N}$ separates $\mathcal{N}_+$ and $\mathcal{N}_-$.\par
If $v\in \mathcal{N}_+$, then $-\int_\Omega\vvv>-\|v\|^2/3$. If $J(v)<d$, then $\|v\|^2-2\int_\Omega\vvv<2d$.
By combining these two inequalities we obtain (ii).\par
Finally, recalling the definitions of $\mathcal{N}_{\pm}$, (iii) follows directly from (i).
\end{proof}

A further functional needed in the sequel is given by
\begin{equation}\label{I}
I(v)=\int_\Omega\vvv\ .
\end{equation}
We provide a different characterization of the mountain pass level.

\begin{theorem}\label{mp}
The mountain pass level $d$ for $J$ is also determined by
\begin{equation}\label{mplevel}
d=\min_{v\in B}\ \frac{\|v\|^6}{54}\ .
\end{equation}
Moreover, $d$ can be lower bounded in terms of the best constant for the (compact) embedding $W^{2,2}_0(\Omega)\subset W^{1,4}_0(\Omega)$,
namely
$$d\ge\frac{8}{27}\min_{W^{2,2}_0(\Omega)}\dfrac{(\int_\Omega|\Delta v|^2)^2}{\int_\Omega|\nabla v|^4}\ .$$
\end{theorem}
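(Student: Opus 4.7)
The plan is first to reparametrize the Nehari manifold through $B$ in order to derive \eqref{mplevel}, and then to obtain the asserted lower bound on $d$ by estimating $I(v)$ from above in terms of $\|v\|$ and $\|\nabla v\|_4$.

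I start from the identity $d=\min_{v\in\mathcal{N}} J(v)$ recorded in \eqref{mpnehari}. Every $v\in\mathcal{N}$ can be written uniquely as $v=\alpha w$ with $w\in B$ and $\alpha>0$: indeed the chain $\alpha^3=\alpha^3 I(w)=I(v)=\|v\|^2/3=\alpha^2\|w\|^2/3$ forces $\alpha=\|w\|^2/3$. Hence $\|v\|^2=\alpha^2\|w\|^2=\|w\|^6/9$ and, using $J(v)=\|v\|^2/6$ on $\mathcal{N}$,
$$J(v)=\frac{\|w\|^6}{54}.$$
Conversely every $w\in B$ produces a point of $\mathcal{N}$ by the same scaling, so minimizing $J$ over $\mathcal{N}$ is equivalent to minimizing $\|w\|^6/54$ over $B$, which is exactly \eqref{mplevel}.

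For the lower bound I combine three elementary ingredients. From the pointwise inequality $2|v_x v_y|\le|\nabla v|^2$ and the Cauchy--Schwarz inequality,
$$I(v)\le\frac{1}{2}\,\Big(\int_\Omega|\nabla v|^4\Big)^{1/2}\|v_{xy}\|_2.$$
Next, I claim the sharp estimate $\|v_{xy}\|_2^2\le\|v\|^2/4$ for $v\in W^{2,2}_0(\Omega)$. Integration by parts (using $v=|\nabla v|=0$ on $\partial\Omega$) yields both $\|v\|^2=\int_\Omega|D^2 v|^2$ and $\int_\Omega\det(D^2 v)=0$, the latter rewriting as $\int v_{xx}v_{yy}=\int v_{xy}^2$; combined with the AM--GM bound $\int v_{xx}^2+\int v_{yy}^2\ge 2\int v_{xx}v_{yy}$ this gives
$$\|v\|^2=\int_\Omega(v_{xx}^2+v_{yy}^2)+2\int_\Omega v_{xy}^2\ge 4\int_\Omega v_{xy}^2,$$
proving the claim.

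Plugging this into the first bound, for $v\in B$ I get $1\le \|v\|\bigl(\int_\Omega|\nabla v|^4\bigr)^{1/2}/4$, hence $\int_\Omega|\nabla v|^4\ge 16/\|v\|^2$. Denoting by $S$ the constant $\min_{W^{2,2}_0(\Omega)}(\int|\Delta v|^2)^2/\int|\nabla v|^4$ appearing in the statement, the inequality $\int|\nabla v|^4\le \|v\|^4/S$ forces $\|v\|^6\ge 16 S$, and \eqref{mplevel} then yields $d\ge 16S/54=8S/27$. The main obstacle is extracting the sharp constant $\tfrac14$ in $\|v_{xy}\|_2^2\le\|v\|^2/4$; the naive bound $\|v_{xy}\|_2^2\le\|v\|^2/2$ coming from $|D^2v|^2\ge 2v_{xy}^2$ would only produce $d\ge 4S/27$, half of the stated value, so the vanishing of $\int\det(D^2v)$ is essential.
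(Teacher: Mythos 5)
Your proof is correct and reaches the paper's constant $\frac{8}{27}$, but the second half follows a genuinely different route. For \eqref{mplevel} the difference is cosmetic: the paper computes $\max_{s\ge0}J(sv)=\|v\|^6/54$ for $v\in B$ along the fibering rays and invokes the minimax characterization, while you use \eqref{mpnehari} together with the scaling bijection between $\mathcal{N}$ and $B$ (namely $v=\frac{\|w\|^2}{3}w$); these are equivalent reformulations of the same homogeneity argument. For the lower bound, however, the paper integrates the cubic term by parts to obtain the identity \eqref{utile}, $I(v)=-\frac14\int_\Omega\Delta v\,|\nabla v|^2$, and a single H\"older inequality then yields \eqref{stima}, $I(v)\le\frac14\|v\|\left(\int_\Omega|\nabla v|^4\right)^{1/2}$, which is inserted into \eqref{mplevel}. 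You prove exactly the same inequality without ever touching \eqref{utile}: you combine the pointwise bound $2|v_xv_y|\le|\nabla v|^2$ and Cauchy--Schwarz with the sharp estimate $4\|v_{xy}\|_2^2\le\|v\|^2$, which you correctly derive from the two $W^{2,2}_0(\Omega)$-identities $\|\Delta v\|_2^2=\|D^2v\|_2^2$ and $\int_\Omega v_{xx}v_{yy}=\int_\Omega v_{xy}^2$ (equivalently $\int_\Omega\det(D^2v)=0$), plus AM--GM. Your closing remark is also on point: the crude estimate $\|v_{xy}\|_2^2\le\frac12\|v\|^2$ would only give the constant $\frac{4}{27}$, so the vanishing of $\int_\Omega\det(D^2v)$ does for you exactly what the identity \eqref{utile} does for the paper. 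As for what each approach buys: the paper's \eqref{utile} and \eqref{stima} are reused later (in the proofs of Theorems \ref{blowup} and \ref{differentblowup}), so deriving them here is economical for the paper as a whole; your argument is self-contained and somewhat more elementary, at the price of establishing a sharp bound on the mixed second derivative that the paper never needs explicitly.
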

\begin{proof}
For all $v\in W^{2,2}_0(\Omega)$ consider the map $f_v:[0,+\infty)\to\mathbb{R}$ defined by
$$f_v(s)=J(sv)=\frac{s^2}{2}\int_\Omega|\Delta v|^2-s^3\int_\Omega\vvv\ .$$
If $I(v)\le0$, the map $s\mapsto f_v(s)$ is strictly increasing and strictly convex, attaining its global minimum at $s=0$;
in this case, $f_v$ has no critical points apart from $s=0$. So, the mountain pass level is achieved for some function $v$ satisfying
$I(v)>0$. For any $v\in B$, see \eqref{B}, we have
$$f_v(s)=\frac{\|v\|^2}{2}s^2-s^3\ .$$
It is straightforward to verify that the map $s\mapsto f_v(s)$ is initially increasing and then strictly decreasing.
It attains the global maximum for $s=\frac{\|v\|^2}{3}$ and
$$\max_{s\ge 0}f_v(s)=\frac{\|v\|^6}{54}\ .$$
Hence,
$$\max_{s\ge 0}J(sv)=\frac{\|v\|^6}{54}\qquad\forall v\in B\ .$$
By the minimax characterization of the mountain pass level we see that \eqref{mplevel} holds.\par
Next, note that integrating by parts we obtain
$$I(v)=\frac{1}{2}\int_\Omega v_x(v_y^2)_x=-\frac{1}{2}\int_\Omega v_{xx}\, v_y^2=-\frac{1}{2}\int_\Omega \Delta v\, v_y^2+
\frac{1}{6}\int_\Omega(v_y^3)_y\ ,$$
$$I(v)=\frac{1}{2}\int_\Omega v_y(v_x^2)_y=-\frac{1}{2}\int_\Omega v_{yy}\, v_x^2=-\frac{1}{2}\int_\Omega\Delta v\, v_x^2+
\frac{1}{6}\int_\Omega(v_x^3)_x\ ,$$
$$\qquad\forall v\in W^{2,2}_0(\Omega)\ .$$
Adding these expressions and invoking the divergence theorem leads to
\begin{equation}\label{utile}
I(v)=-\frac{1}{4}\int_\Omega\Delta v\, |\nabla v|^2+\frac{1}{12}\int_\Omega[(v_x^3)_x+(v_y^3)_y]=
-\frac{1}{4}\int_\Omega\Delta v\, |\nabla v|^2$$
$$\qquad\forall v\in W^{2,2}_0(\Omega)\ .
\end{equation}
Therefore, by H\"older inequality,
\begin{equation}\label{stima}
I(v)\le\frac{1}{4}\left(\int_\Omega|\Delta v|^2\right)^{1/2}\left(\int_\Omega|\nabla v|^4\right)^{1/2}
\qquad\forall v\in W^{2,2}_0(\Omega)
\end{equation}
and, according to \eqref{mplevel}, we infer
$$d=\frac{1}{54}\min_X\frac{(\int_\Omega|\Delta v|^2)^3}{I(v)^2}\ge\frac{8}{27}\min_{W^{2,2}_0(\Omega)}
\frac{(\int_\Omega|\Delta v|^2)^2}{\int_\Omega|\nabla v|^4}$$
where $X:=\{v\in W^{2,2}_0(\Omega);\, I(v)>0\}$.\end{proof}

In Figure \ref{picture} we sketch a geometric representation of the Nehari manifold $\mathcal{N}$ which summarizes the results obtained in the
present section.

\begin{figure}[ht]
\begin{center}
{\includegraphics[height=30mm, width=110mm]{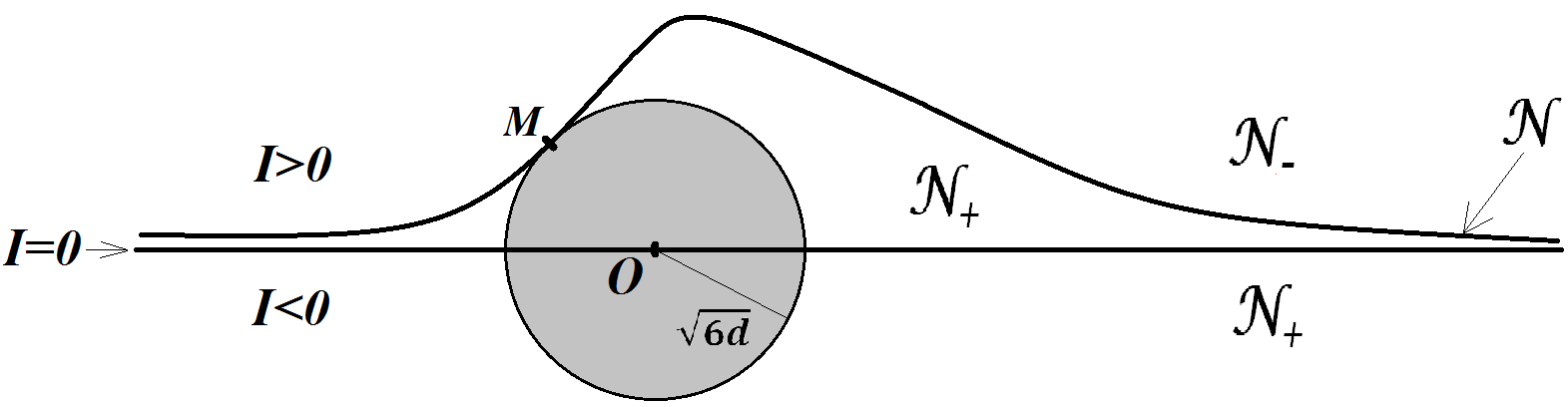}}
\caption{The phase space $W^{2,2}_0(\Omega)$ with:
$\mathcal{N}$ = Nehari manifold,\, $M$ = mountain pass point, and $I$ given by (\ref{I}).}\label{picture}
\end{center}
\end{figure}

\section{The parabolic problem with source}\label{parabolicwithsource}

This section is devoted to the study of the evolution problem
\begin{equation}\label{nonlinpar}
u_t + \Delta^2 u = \det(D^2 u) + \lambda f\qquad\mbox{in }\Omega\times(0,T)
\end{equation}
for some $T>0$. We consider both the sets of boundary conditions $u|_{\partial \Omega} = u_\nu|_{\partial \Omega}=0$ (Dirichlet)
and $u|_{\partial \Omega} = \Delta u|_{\partial \Omega}=0$ (Navier). Here and in the sequel we will be always
considering weak solutions.

We start by proving a result concerning an associated linear problem.

\begin{theorem}\label{existlin}
Let $0<T\le\infty$ and let $f \in L^2(0,T;L^2(\Omega))$. The Dirichlet problem for the linear fourth order parabolic equation
\begin{equation}\label{linpar}
u_t + \Delta^2 u = f\qquad\mbox{in }\Omega\times(0,T),
\end{equation}
with initial datum $u_0 \in W_0^{2,2}(\Omega)$ admits a unique weak solution in the space
$$
C([0,T);W_0^{2,2}(\Omega)) \cap L^2(0,T;W^{4,2}(\Omega)) \cap W^{1,2}(0,T;L^2(\Omega)).
$$
The corresponding Navier problem with initial datum
$u_0 \in W^{2,2}(\Omega) \cap W_0^{1,2}(\Omega)$ admits a unique weak solution in the space
$$
C([0,T);W^{2,2}(\Omega) \cap W_0^{1,2}(\Omega)) \cap L^2(0,T;W^{4,2}(\Omega)) \cap W^{1,2}(0,T;L^2(\Omega)).
$$
Furthermore, both cases admit the estimate
$$
\sup_{0 \le t < T} \|\Delta u\|_2^2 + \int_0^T \|\Delta^2 u\|_ 2^2 + \int_0^T \|u_t\|_2^2 \le
C \left( \|\Delta u_0\|_2^2 + \int_0^T \|f\|_2^2 \right)\ .
$$
\end{theorem}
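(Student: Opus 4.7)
The plan is to attack this via a standard Galerkin scheme, separately for each set of boundary conditions, and then combine the a priori estimates. Let $\{w_k\}_{k\ge 1}$ be the orthonormal basis of $L^2(\Omega)$ consisting of eigenfunctions of $\Delta^2$ under the chosen boundary conditions (Dirichlet or Navier); both operators are self-adjoint and positive with compact resolvent, so such a basis exists and is orthogonal in the corresponding energy space ($W^{2,2}_0(\Omega)$ in the Dirichlet case, $W^{2,2}(\Omega)\cap W^{1,2}_0(\Omega)$ in the Navier case). For each $n$ I would set $u_n(t)=\sum_{k=1}^n c_k^n(t)w_k$, project the equation \eqref{linpar} onto $\mathrm{span}\{w_1,\dots,w_n\}$, and pick $u_n(0)$ as the projection of $u_0$. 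The resulting linear ODE system in $(c_k^n)$ has a global smooth solution, so the approximants $u_n$ are defined on $[0,T)$.

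Next come two a priori estimates obtained by testing with $\Delta^2 u_n$ and with $\partial_t u_n$ respectively (both admissible since they lie in $\mathrm{span}\{w_k\}$). Integration by parts works in both boundary settings because for Dirichlet one has $u=u_\nu=0$ on $\partial\Omega$, while for Navier one has $u=\Delta u=0$ on $\partial\Omega$, so in either case the boundary terms in $\int u_t\,\Delta^2 u=\int\Delta u_t\,\Delta u=\tfrac{1}{2}\tfrac{d}{dt}\|\Delta u\|_2^2$ vanish. Young's inequality then yields
\[
\tfrac{d}{dt}\|\Delta u_n\|_2^2+\|\Delta^2 u_n\|_2^2\le\|f\|_2^2,
\qquad
\|\partial_t u_n\|_2^2+\tfrac{d}{dt}\|\Delta u_n\|_2^2\le\|f\|_2^2,
\]
and integrating in time gives the combined inequality in the theorem, uniformly in $n$.

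Passage to the limit is easy because the equation is linear: up to a subsequence, $u_n\rightharpoonup u$ weakly-* in $L^\infty(0,T;W^{2,2})$, weakly in $L^2(0,T;W^{4,2})$ (using elliptic regularity for $\Delta^2$ under the respective boundary conditions, which upgrades control of $\|\Delta^2 u_n\|_2$ to control of $\|u_n\|_{W^{4,2}}$), and $\partial_t u_n\rightharpoonup \partial_t u$ in $L^2(0,T;L^2)$. The weak form of \eqref{linpar} passes to the limit term by term. The estimate is preserved by weak lower semicontinuity of norms. The continuity $u\in C([0,T);W^{2,2})$ is then automatic from $u\in L^2(0,T;W^{4,2})$ and $\partial_t u\in L^2(0,T;L^2)$ via the standard Lions--Magenes interpolation lemma. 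Uniqueness follows immediately from linearity: if $w=u_1-u_2$ satisfies $w_t+\Delta^2 w=0$ with $w(0)=0$, testing with $w$ and using the boundary conditions yields $\tfrac{1}{2}\tfrac{d}{dt}\|w\|_2^2+\|\Delta w\|_2^2=0$, hence $w\equiv 0$.

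The only genuinely subtle point is making sure the two boundary conditions are handled on equal footing. For Dirichlet the natural energy space is $W^{2,2}_0(\Omega)$, whose norm is $\|\Delta\cdot\|_2$; for Navier one loses $u_\nu=0$, so care is required to check that $(u,v)\mapsto\int\Delta u\,\Delta v$ still defines an inner product equivalent to the $W^{2,2}$-norm on $W^{2,2}(\Omega)\cap W^{1,2}_0(\Omega)$ (true on bounded smooth domains by standard elliptic regularity applied to the Navier Laplacian). Once this is in hand, the Galerkin scheme, the two energy identities, and the final bootstrap to $L^2(0,T;W^{4,2})$ through elliptic regularity for $\Delta^2$ run in parallel in both settings, and the proof is complete.
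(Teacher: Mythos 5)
Your proposal is correct and follows essentially the same route as the paper's proof: a Galerkin scheme built on the eigenfunctions of $\Delta^2$ under the respective boundary conditions, the main a priori bound obtained by testing with $\Delta^2 u_n$ (admissible since it stays in the span), weak-compactness passage to the limit with elliptic regularity upgrading $\|\Delta^2 u\|_2$ to the $W^{4,2}(\Omega)$-norm, continuity in time via interpolation, and linearity for uniqueness. The only minor differences are that you derive the bound on $\int_0^T\|u_t\|_2^2$ by testing with $\partial_t u_n$ at the Galerkin level, whereas the paper recovers it a posteriori by $L^2$-duality from $u_t=f-\Delta^2 u$, and you make explicit the uniqueness computation and the Navier-case norm equivalence that the paper dispatches as ``obvious modifications'' and a ``standard contradiction argument.''
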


\begin{proof}
\textsc{Step 1. Existence via Galerkin method.}
We will focus herein on Dirichlet boundary conditions; the proof for the Navier problem follows with obvious modifications.
Let $u_0\in W^{2,2}_0(\Omega)$ and consider the following linear problem
\neweq{linear}
\left\{\begin{array}{ll}
u_t+\Delta^2 u=f & \text{in } \Omega\times(0,T) \\
u=u_\nu=0 & \text{on }\partial\Omega\times(0,T) \\
u(x,0)=u_0(x) & \text{in }\Omega\, .
\end{array}\right.\endeq

Let $\{w_k\}_{k\ge 1}\subset W^{2,2}_0(\Omega)$ be an orthogonal complete system of eigenfunctions of $\Delta^2$ under Dirichlet boundary conditions
normalized by $\|w_k\|_2=1$. Denote by $\{\lambda_k\}$ the unbounded sequence of corresponding eigenvalues and by
$$W_k:={\rm span } \{w_1,\dots,w_k\}\qquad\forall k\ge 1.$$
Denote by $(\cdot,\cdot)_2$ and $(\cdot,\cdot)$ the scalar products in $L^2(\Omega)$ and $W^{2,2}_0(\Omega)$. For any $k\ge 1$ let
$$u_0^k:=\sum_{i=1}^k (u_0,w_i)_2 w_i =\sum_{i=1}^k\lambda_i^{-1}(u_0,w_i)\, w_i$$
so that $u_0^k\to u_0$ in $W^{2,2}_0(\Omega)$ as $k\to +\infty$. For any $k\ge 1$ we seek a solution $u_k\in W^{1,2}(0,T;W_k)$ of the variational problem
\begin{equation} \label{eq:P-W-k}
\left\{\begin{array}{lll}
(u'(t),v)_2+(u(t),v)=(f(t),v)_2 \\
\qquad \text{for any } v\in W_k\quad\mbox{for a.e. }t\in(0,T)\\
u(0)=u_0^k\, .
\end{array}\right.
\end{equation}
We seek solutions in the form
$$u_k(t)=\sum_{i=1}^k g_i^k(t) w_i$$
so that for any $1\le i\le k$ the function $g_i^k$ solves the Cauchy problem
\begin{equation} \label{eq:Cauchy-fin-dim}
\left\{\begin{array}{ll}
(g_i^k(t))'+\lambda_i g_i^k(t)=(f(t),w_i)_{2} \\
g_i^k(0)=(u_0^k,w_i)_{2}\, .
\end{array}\right.
\end{equation}
The linear ordinary differential equation \eqref{eq:Cauchy-fin-dim} admits a unique solution $g_i^k$ such that
$g_i^k\in W^{1,2}(0,T)$, and hence also \eqref{eq:P-W-k} admits
$u_k\in W^{1,2}(0,T;W_k)$ as a unique solution.

Note that
$$\Delta^2u_k(t)=\sum_{i=1}^k g_i^k(t)\lambda_i w_i\in W_k\qquad\mbox{for a.e. }t\in(0,T)$$
so that by testing equation \eqref{eq:P-W-k} with $v=\Delta^2u_k(t)$ we obtain that for a.e.\ $t\in(0,T)$:
$$
\frac 12 \frac{d}{dt} \|u_k(t)\|^2+\frac 12 \|u_k(t)\|_{W^{4,2}(\Omega)}^2=(f(t),\Delta^2 u_k(t))_2.
$$
After integration over $(0,t)$ we obtain
$$
\|u_k(t)\|^2-\|u_0^k\|^2+\|u_k\|_{L^2(0,t;W^{4,2}(\Omega))}^2\! \le\!
\int_0^T\!\left(C\|f(s)\|_2^2+\frac{1}{2}\|u_k(s)\|_{W^{4,2}(\Omega)}^2\right)ds
$$
and therefore
$$
\|u_k\|_{L^\infty(0,T;W^{2,2}_0(\Omega))}^2+\frac12 \|u_k\|_{L^2(0,T;W^{4,2}(\Omega))}^2 \le
\|u_0^k\|^2+C\|f\|_{L^2(0,T;L^2(\Omega))}^2.
$$
Since the sequence $\{u_0^k\}$ is bounded in $W^{2,2}_0(\Omega)$, we infer that
$$
\{u_k\}\quad\mbox{is bounded in}\quad L^\infty(0,T;W^{2,2}_0(\Omega))\cap L^2(0,T;W^{4,2}(\Omega)).
$$
Whence, we may extract a subsequence, still denoted by $\{u_k\}$ such that
$$
u_k\rightharpoonup^*u\mbox{ in }L^\infty(0,T;W^{2,2}_0(\Omega))\qquad\mbox{and}\qquad u_k\rightharpoonup u\mbox{ in }L^2(0,T;W^{4,2}(\Omega))\ .
$$
Moreover, since $u_k'=-\Delta^2u_k+f$ in the weak sense, we also have that $u_k'\in L^2(0,T;L^2(\Omega))$ and that
$$u_k'\rightharpoonup u'\mbox{ in }L^2(0,T;L^2(\Omega)).$$
Hence, by letting $k\to\infty$ in \eq{eq:P-W-k}, we see that
$$u\in L^\infty(0,T;W^{2,2}_0(\Omega))\cap L^2(0,T;W^{4,2}(\Omega))\cap W^{1,2}(0,T;L^2(\Omega))$$
solves the problem
\begin{equation} \label{weaklinear}
\left\{\begin{array}{lll}
(u'(t),v)_2+(u(t),v)=(f(t),v)_2 \\
\qquad \text{for any } v\in W^{2,2}_0(\Omega)\quad\mbox{for a.e. }t\in(0,T)\\
u(0)=u_0\, .
\end{array}\right.
\end{equation}
By interpolation between $L^2(0,T;W^{4,2}(\Omega))$ and $W^{1,2}(0,T;L^2(\Omega))$ we obtain $u\in C([0,T);W^{2,2}_0(\Omega))$.\par

\textsc{Step 2. Estimates.}
The existence result justifies the following calculations performed in order to obtain the desired estimate.
We multiply equation~\eqref{linpar} by $\Delta^2 u$ and integrate by parts over $\Omega$ the result to find
$$
\frac{1}{2} \frac{d}{dt} \|\Delta u\|_2^2 + \| \Delta^2 u \|_2^2 = ( \Delta^2 u, f )_2 \le \frac{\epsilon}{2} \|\Delta^2 u\|_2^2 +
\frac{1}{2 \epsilon} \|f\|_2^2\quad\mbox{for a.e. }t\in(0,T)
$$
for any $\epsilon>0$. Upon integration in time we obtain
$$
\sup_{0 \le t < T} \|\Delta u\|_2^2 + \int_0^T \|\Delta^2 u\|_ 2^2 \le
C \left( \|\Delta u_0\|_2^2 + \int_0^T \|f\|_2^2 \right).
$$
To conclude multiply equation \eqref{linpar} by an arbitrary function $v \in L^2(\Omega)$ to get
$$
( v, u_t )_2 + ( v, \Delta^2 u )_2 = ( v, f )_2\quad\mbox{for a.e. }t\in(0,T).
$$
This equality implies the inequality
$$
( v, u_t )_2 \le \|f\|_2 \|v\|_2 + \|\Delta^2 v\|_2\|v\|_2.
$$
Now taking the supremum over all $v \in L^2(\Omega)$ such that $\|v\|_2 = 1$ and the fact
$$
\sup_v ( v, u_t )_2 = \|u_t\|_2,
$$
we find
$$
\int_0^T \|u_t\|_2^2 \le C \left( \int_0^T \|\Delta^2 u\|_2^2 + \int_0^T \|f\|_2^2 \right),
$$
and the desired inequality follows immediately.

Finally, uniqueness follows by a standard contradiction argument.
\end{proof}

We now state the main result of this section.

\begin{theorem}\label{existence}
The problem
\neweq{target}
\left\{\begin{array}{ll}
u_t + \Delta^2 u = \det(D^2 u) + \lambda f\qquad & \mbox{in }\Omega\times(0,T)\\
u(x,0)=u_0(x)\qquad & \mbox{in }\Omega\\
u(x,t)=u_\nu(x,t)=0\qquad & \mbox{on }\partial\Omega\times(0,T)
\end{array}\right.
\endeq
admits a unique solution in
$$
\mathcal{X}_T:=C([0,T);W_0^{2,2}(\Omega)) \cap L^2(0,T;W^{4,2}(\Omega)) \cap W^{1,2}(0,T;L^2(\Omega)),
$$
provided one of the following set of conditions holds\par
(i) $u_0\in W_0^{2,2}(\Omega)$, $f\in L^2(0,T;L^2(\Omega))$, $\lambda\in \mathbb{R}$, and $T>0$ is sufficiently small;\par
(ii) $T\in(0,\infty)$, $f\in L^2(0,T;L^2(\Omega))$, and $\|u_0\|$ and $|\lambda|$ are sufficiently small.\par\noindent
Moreover, if $[0,T^*)$ denotes the maximal interval of continuation of $u$ and if $T^*<\infty$ then $\|u(t)\|\to \infty$ as $t\to T^*$.\par
An identical result holds for the Navier problem but this time the solution belongs to the space
$$
\mathcal{Y}_T:=C([0,T);W^{2,2}(\Omega) \cap W_0^{1,2}(\Omega)) \cap L^2(0,T;W^{4,2}(\Omega)) \cap W^{1,2}(0,T;L^2(\Omega)),
$$
assuming that the initial condition $u_0 \in W^{2,2}(\Omega) \cap W_0^{1,2}(\Omega)$.
\end{theorem}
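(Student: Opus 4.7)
The natural approach is a Banach fixed-point argument in $\mathcal{X}_T$ (respectively $\mathcal{Y}_T$) built on top of the linear solver of Theorem \ref{existlin}. For $v\in\mathcal{X}_T$ define $S(v):=u$, where $u$ is the unique weak solution of the linear problem
\[
u_t+\Delta^2u=\det(D^2v)+\lambda f,\qquad u(x,0)=u_0(x),
\]
with the prescribed (Dirichlet or Navier) boundary conditions. Once we verify that $\det(D^2v)\in L^2(0,T;L^2(\Omega))$ whenever $v\in\mathcal{X}_T$, Theorem \ref{existlin} yields
\[
\|S(v)\|_{\mathcal{X}_T}\le C\bigl(\|u_0\|+\|\det(D^2v)\|_{L^2(0,T;L^2)}+|\lambda|\,\|f\|_{L^2(0,T;L^2)}\bigr).
\]
The goal is to exhibit a closed ball $B_M\subset\mathcal{X}_T$ such that $S(B_M)\subset B_M$ and $S|_{B_M}$ is a strict contraction; its unique fixed point is then the desired solution of \eqref{target}, and uniqueness in all of $\mathcal{X}_T$ follows from the same contraction estimate applied to two hypothetical solutions.

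The heart of the argument is the nonlinear bound
\[
\|\det(D^2v)\|_{L^2(0,T;L^2(\Omega))}^2\le C\,T^{1/2}\,\|v\|_{\mathcal{X}_T}^4.
\]
Since $|\det(D^2v)|\le C|D^2v|^2$ it suffices to control $D^2v$ in $L^4(0,T;L^4(\Omega))$. In two space dimensions Gagliardo--Nirenberg gives $\|D^2v\|_{L^4(\Omega)}^2\le C\|v\|_{W^{2,2}}\|v\|_{W^{3,2}}$, while elliptic interpolation yields $\|v\|_{W^{3,2}}^2\le C\|v\|_{W^{2,2}}\|v\|_{W^{4,2}}$. Inserting these bounds into the $L^\infty(0,T;W^{2,2})\cap L^2(0,T;W^{4,2})$ structure of $\mathcal{X}_T$ and applying H\"older in time produces the gain factor $T^{1/2}$. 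A parallel bilinear computation, based on
\[
\det(D^2v_1)-\det(D^2v_2)=(v_1-v_2)_{xx}(v_1)_{yy}+(v_2)_{xx}(v_1-v_2)_{yy}-\bigl[(v_1)_{xy}+(v_2)_{xy}\bigr](v_1-v_2)_{xy},
\]
delivers the Lipschitz estimate
\[
\|\det(D^2v_1)-\det(D^2v_2)\|_{L^2(0,T;L^2)}\le C\,T^{1/4}\bigl(\|v_1\|_{\mathcal{X}_T}+\|v_2\|_{\mathcal{X}_T}\bigr)\|v_1-v_2\|_{\mathcal{X}_T}.
\]

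With these two estimates in hand, the self-map condition on $B_M$ reads $C(\|u_0\|+T^{1/4}M^2+|\lambda|\,\|f\|_{L^2(L^2)})\le M$, and the contraction condition reads $CT^{1/4}M<1/2$. In case (i) we choose $M$ of the order of $\|u_0\|+|\lambda|\,\|f\|$ and shrink $T$ to meet both conditions; in case (ii) we fix $T$, pick $M$ small enough to satisfy the contraction condition, and then exploit the smallness of $\|u_0\|$ and $|\lambda|$ to close the self-map condition. Banach's theorem provides existence and uniqueness. For the blow-up alternative, the local existence time in (i) depends only on $\|u(t_0)\|$ and $\|f\|_{L^2(t_0,t_0+\tau;L^2)}$; if $\limsup_{t\to T^*}\|u(t)\|<\infty$ we could restart the construction at some $t_0<T^*$ and extend the solution past $T^*$, contradicting maximality. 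The Navier case is identical once the eigenbasis in the proof of Theorem \ref{existlin} is replaced by the corresponding one for $\Delta^2$ with Navier conditions. The main obstacle is precisely the nonlinear bound on $\det(D^2v)$: it exploits the specifically two-dimensional Gagliardo--Nirenberg embedding together with elliptic interpolation, and the resulting $T^{1/2}$ gain is what simultaneously unlocks the local-in-time and the small-data regimes.
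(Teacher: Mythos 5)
Your overall scheme coincides with the paper's: both run a Banach fixed point for the map $v\mapsto S(v)$ in $\mathcal{X}_T$ on top of the linear solver of Theorem \ref{existlin}, and both obtain the continuation criterion by restarting the local construction. The difference lies in the nonlinear estimate. You prove a single bound with a time gain, $\|\det(D^2v)\|_{L^2(0,T;L^2(\Omega))}\le C\,T^{1/4}\|v\|_{\mathcal{X}_T}^2$ (via Ladyzhenskaya's $L^4$ inequality applied to $D^2v$ plus the interpolation $\|v\|_{W^{3,2}}^2\le C\|v\|_{W^{2,2}}\|v\|_{W^{4,2}}$), and use it in both regimes. The paper instead uses two different estimates: for the small-data regime (its Step 1) the $T$-free bound $\|\det(D^2v_1)-\det(D^2v_2)\|_2^2\le C(\|\Delta^2v_1\|_2^2+\|\Delta^2v_2\|_2^2)\|\Delta(v_1-v_2)\|_2^2$, coming from $W^{4,2}(\Omega)\hookrightarrow W^{2,\infty}(\Omega)$, so that after time integration the Lipschitz constant is controlled by $\int_0^T\|\Delta^2v_i\|_2^2$, i.e.\ by the ball radius and the data rather than by $T$; and for the local-in-time regime (its Step 2) the Gagliardo--Nirenberg bound $\|\Delta v\|_\infty\le C\|\Delta v\|_2^{1/4}\|\nabla\Delta v\|_3^{3/4}$, which produces a $T^{1/4}$ factor much as yours does. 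The payoff of the two-estimate strategy is that in case (ii) the smallness of $\|u_0\|$ and $|\lambda|$ is independent of $T$, and the argument passes to $T=\infty$ (the paper notes that $\Gamma(\rho,u_0,\lambda,f)$ does not depend on how large $T$ is). With your single estimate, the contraction condition $CT^{1/4}M<1/2$ forces the ball radius, and hence the admissible size of the data, to shrink as $T$ grows, and the argument breaks down at $T=\infty$. This still proves the literal statement of (ii) for each fixed finite $T$, but it is strictly weaker than what the paper's proof delivers.

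There is also one genuine (though easily repaired) flaw: in the continuation step you assume $\limsup_{t\to T^*}\|u(t)\|<\infty$ and derive a contradiction. That only proves $\limsup_{t\to T^*}\|u(t)\|=\infty$, whereas the theorem asserts the full limit $\|u(t)\|\to\infty$ as $t\to T^*$. To get the stated conclusion you must contradict $\liminf_{t\to T^*}\|u(t)\|<\infty$: if the liminf equals $\gamma<\infty$, pick a sequence $t_n\to T^*$ with $\|u(t_n)\|<2\gamma$ and $t_n+\overline{T}(\lambda,2\gamma)>T^*$, where $\overline{T}$ is the local existence time from case (i), and restart at $t_n$; this is exactly the paper's Step 3, and the uniformity you already isolated (the local time depends only on the norm of the restarted datum, on $\lambda$ and on $f$) is precisely what makes it work. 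Finally, your appeal to ``the same contraction estimate'' for uniqueness in all of $\mathcal{X}_T$ needs the usual iteration over short subintervals, since for two arbitrary, possibly large, solutions the factor $CT^{1/4}(\|u_1\|_{\mathcal{X}_T}+\|u_2\|_{\mathcal{X}_T})$ need not be below $1$; this is a minor point, which the paper itself also passes over quickly.
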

\begin{proof} For all $u\in W^{4,2}(\Omega)$ we have
\begin{eqnarray*}
\|\det (D^2 u)\|_2^2 &=& \int_\Omega |\det (D^2 u)|^2 \le C \int_\Omega |D^2 u|^4\le C \|D^2 u\|_\infty^2 \int_\Omega |D^2 u|^2\\
\ &\le & C \|\Delta u\|_\infty^2 \, \|\Delta u\|^2_2 \le C \|\Delta^2 u\|_2^2 \,\, \|\Delta u\|^2_2,
\end{eqnarray*}
where the determinant of the Hessian matrix is estimated with the Euclidean norm of this matrix squared in the first inequality,
a H\"older inequality in the second inequality, the estimation of homogeneous Sobolev norms with the corresponding norms of Laplacians
in the third, and the Sobolev embedding $W^{4,2}(\Omega) \hookrightarrow W^{2,\infty}(\Omega)$ in the fourth.
Hence, if $u \in C([0,T);W^{2,2}(\Omega)) \cap L^2(0,T;W^{4,2}(\Omega))$, we may directly estimate
\begin{eqnarray*}
\|\det (D^2 u)\|_{L^2(0,T;L^2(\Omega))}^2 &=& \int_0^T \|\det (D^2 u)\|_2^2 \le C \int_0^T \|\Delta^2 u\|_2^2 \,\, \|\Delta u\|^2_2\\
\ &\le & C \sup_{0 \le t < T} \|\Delta u\|^2_2 \int_0^T \|\Delta^2 u\|_2^2 < \infty
\end{eqnarray*}
which proves that
\begin{equation}\label{inclusion}
u\! \in\! C([0,T);W^{2,2}(\Omega))\! \cap\! L^2(0,T;W^{4,2}(\Omega))\Longrightarrow\det (D^2 u)\! \in\! L^2(0,T;L^{2}(\Omega)).
\end{equation}

In what follows we focus on the Dirichlet case since the proof for the Navier one follows similarly. We introduce the initial-Dirichlet linear problems
\begin{equation}\label{initDir}
\left\{
\begin{array}{ll}
(u_1)_t + \Delta^2 u_1 = \det (D^2 v_1) + \lambda f\ ,\quad u_1(x,0)=u_0(x)\ ,\\
(u_2)_t + \Delta^2 u_2 = \det (D^2 v_2) + \lambda f\ ,\quad u_2(x,0)=u_0(x)\ ,
\end{array}
\right.
\end{equation}
where $v_1,v_2 \in {\mathcal{X}_T}$. Theorem~\ref{existlin} and~\eqref{inclusion} show that $u_1,u_2 \in {\mathcal{X}_T}$.
Subtracting the equations in \eqref{initDir} we get
$$
(u_1 - u_2)_t + \Delta^2 (u_1- u_2) = \det (D^2 v_1) - \det (D^2 v_2)\ ,\quad(u_1-u_2)(x,0)=0\ ,
$$
and upon multiplying by $\Delta^2 (u_1 - u_2)$ and integrating we find
$$
( \Delta^2 (u_1 - u_2) , (u_1 - u_2)_t )_2 + ( \Delta^2 (u_1 - u_2) , \Delta^2 (u_1- u_2) )_2 = $$
$$ ( \Delta^2 (u_1 - u_2) , \det (D^2 v_1) - \det (D^2 v_2) )_2.
$$
This leads to the inequalities
$$
\frac{1}{2} \frac{d}{dt} \|\Delta (u_1 - u_2)\|_2^2 + \|\Delta^2 (u_1 - u_2)\|_2^2 \le $$
$$
\frac{1}{2} \|\Delta^2 (u_1 - u_2)\|_2^2+ \frac{1}{2} \|\det (D^2 v_1) - \det (D^2 v_2)\|_2^2
$$
and, in turn,
\begin{equation}\label{ineqexist0}
\frac{d}{dt} \|\Delta (u_1 - u_2)\|_2^2 + \|\Delta^2 (u_1 - u_2)\|_2^2 \le \|\det (D^2 v_1) - \det (D^2 v_2)\|_2^2.
\end{equation}

We split the remaining part of the proof into three steps.

\textsc{Step 1. Existence for arbitrary temporal lapses.}

We start focussing on the case $T < \infty$ and estimating the term containing the determinants
\neweq{primera}
\|\!\det (D^2 v_1)\! -\! \det (D^2 v_2)\|_2^2\! \le\! C\! \int_\Omega\! |D^2(v_1-v_2)|^2 (|D^2 v_1|\!+\!|D^2 v_2|)^2\!\le
\endeq
$$
C(\|\Delta v_1\|_\infty^2+\|\Delta v_2\|^2_\infty) \|\Delta(v_1 - v_2)\|_2^2 \le
C(\|\Delta^2 v_1\|_2^2+\|\Delta^2 v_2\|^2_2) \|\Delta(v_1 - v_2)\|_2^2,
$$
to infer from \eq{ineqexist0}
$$
\frac{d}{dt} \|\Delta (u_1 - u_2)\|_2^2 + \|\Delta^2 (u_1 - u_2)\|_2^2 \le
C(\|\Delta^2 v_1\|_2^2+\|\Delta^2 v_2\|^2_2) \|\Delta(v_1 - v_2)\|_2^2.
$$
Integrating with respect to time we obtain
\neweq{segunda}
\sup_{0 \le t < T} \|\Delta (u_1 - u_2)\|_2^2 + \int_0^T \|\Delta^2 (u_1 - u_2)\|_2^2 \le
\endeq
$$
C \sup_{0 \le t < T} \|\Delta(v_1 - v_2)\|_2^2 \int_0^T (\|\Delta^2 v_1\|_2^2+\|\Delta^2 v_2\|^2_2).
$$
Now consider a function $w \in L^2(\Omega)$ and the scalar product
$$
( w , (u_1 - u_2)_t )_2 + ( w , \Delta^2 (u_1- u_2) )_2 =( w , \det (D^2 v_1) - \det (D^2 v_2) )_2.
$$
We have the estimate
$$
( w , (u_1 - u_2)_t )_2 \le
$$
$$
\|w\|_2 \|\Delta^2 (u_1 - u_2)\|_2 + \|w\|_2 \|\det (D^2 v_1) - \det (D^2 v_2)\|_2,
$$
and taking the supremum of all $w \in L^2(\Omega)$ such that $\|w\|_2 =1$ we get
$$
\sup_w ( w , (u_1 - u_2)_t )_2 \le \|\Delta^2 (u_1 - u_2)\|_2 + \|\det (D^2 v_1) - \det (D^2 v_2)\|_2.
$$
Therefore, from \eq{primera} we infer that
$$
\|(u_1 - u_2)_t\|_2^2 \le
$$
$$
C \left[ \|\Delta^2 (u_1 - u_2)\|_2^2 + (\|\Delta^2 v_1\|_2^2+\|\Delta^2 v_2\|^2_2) \|\Delta(v_1 - v_2)\|_2^2 \right],
$$
and consequently, by using \eq{segunda},
\neweq{tercera}
\sup_{0 \le t < T} \|\Delta (u_1 - u_2)\|_2^2 + \int_0^T \|\Delta^2 (u_1 - u_2)\|_2^2 + \int_0^T \|(u_1 - u_2)_t\|_2^2 \le
\endeq
$$
C \sup_{0 \le t < T} \|\Delta(v_1 - v_2)\|_2^2 \int_0^T (\|\Delta^2 v_1\|_2^2+\|\Delta^2 v_2\|^2_2).
$$
On the space $\mathcal{X}_T$ we define the norm
$$
\|u\|_{\mathcal{X}_T}^2 := \sup_{0 \le t < T} \|\Delta u\|_2^2 + \int_0^T \|\Delta^2 u\|_2^2 + \int_0^T \|u_t\|_2^2,
$$
so that \eq{tercera} reads
\begin{equation}\label{ineqexist}
\|u_1 - u_2\|_{\mathcal{X}_T} \le C \left[ \int_0^T (\|\Delta^2 v_1\|_2^2+\|\Delta^2 v_2\|^2_2) \right]^{1/2} \|v_1 - v_2\|_{\mathcal{X}_T}.
\end{equation}

Now consider the unique solution $u_\ell$ (see Theorem \ref{existlin}) to the linear problem
$$
(u_\ell)_t + \Delta^2 u_\ell = \lambda f,
$$
with the same boundary and initial conditions as~\eqref{initDir}. Then define the ball
\neweq{Brho}
B_\rho = \{u \in {\mathcal{X}_T} : \|u-u_\ell\|_{{\mathcal{X}_T}} \le \rho\}.
\endeq
Using estimate~\eqref{ineqexist} we find
\neweq{cuarta}
\|u_i-u_\ell\|_{{\mathcal{X}_T}} \le  C \left( \int_0^T \|\Delta^2 v_i\|_2^2 \right)^{1/2} \|v_i\|_{\mathcal{X}_T} \le C \|v_i\|_{\mathcal{X}_T}^2,
\endeq
for $i=1,2$. We use the triangle inequality
\neweq{triangle}
\|v_i\|_{\mathcal{X}_T} \le \|v_i - u_\ell\|_{\mathcal{X}_T} + \|u_\ell\|_{\mathcal{X}_T}
\endeq
together with (see Theorem \ref{existlin})
\neweq{triangle2}
\|u_\ell\|_{\mathcal{X}_T}^2 \le C \left( \|\Delta u_0\|_2^2 + \lambda^2 \int_0^T \|f\|_2^2 \right)=:C\, \Gamma(\rho,u_0,\lambda,f).
\endeq
to infer from \eq{cuarta}-\eq{triangle}-\eq{triangle2} that
$$
\|u_i - u_\ell\|_{\mathcal{X}_T} \le C \left( \rho^2 + \|\Delta u_0\|_2^2 + \lambda^2 \int_0^T \|f\|_2^2 \right),
$$
and thus
$$
\|u_i - u_\ell\|_{\mathcal{X}_T} \le \rho,
$$
for small enough $\rho$, $|\lambda|$ and $\|\Delta u_0\|_2$.

By using \eq{cuarta}-\eq{triangle}-\eq{triangle2} and reasoning as before we can transform~\eqref{ineqexist} into
$$
\|u_1 - u_2\|_{\mathcal{X}_T} \le C\, \Gamma(\rho,u_0,\lambda,f)^{1/2} \|v_1 - v_2\|_{\mathcal{X}_T}.
$$
Again, for $\rho$, $|\lambda|$ and $\|\Delta u_0\|_2$ small enough we have
$$
\|u_1 - u_2\|_{\mathcal{X}_T} \le \frac{1}{2} \|v_1 - v_2\|_{\mathcal{X}_T}.
$$
The existence of a unique solution follows from the application of Banach fixed point theorem to the map
\begin{eqnarray} \nonumber
\mathcal{A}: B_\rho &\rightarrow& B_\rho \\ \nonumber
v_i &\mapsto& u_i,
\end{eqnarray}
for $i=1,2$. The case $T=\infty$ follows similarly since $\Gamma(\rho,u_0,\lambda,f)$ does not depend on how large is $T$.

\textsc{Step 2. Local existence in time.}

By the Gagliardo-Nirenberg inequality \cite{gagliardo,nirenberg},
$$\|\Delta v_i\|_\infty \le C \|\Delta v_i\|_2^{1/4} \|\nabla \Delta v_i\|_3^{3/4},\qquad(i=1,2),$$
we may go back to~\eqref{ineqexist0} and we improve \eq{primera} with
$$
\|\det (D^2 v_1) - \det (D^2 v_2)\|_2^2 \le C(\|\Delta v_1\|_\infty^2+\|\Delta v_2\|^2_\infty) \|v_1 - v_2\|^2 \le
$$
$$
C(\|\Delta v_1\|_2^{1/2} \|\nabla \Delta v_1\|_3^{3/2} + \|\Delta v_2\|_2^{1/2} \|\nabla \Delta v_2\|_3^{3/2})\|v_1 - v_2\|^2.
$$
This, together with a Sobolev embedding, leads to
$$
\frac{d}{dt} \|u_1 - u_2\|^2 + \|\Delta^2 (u_1 - u_2)\|_2^2 \le
$$
$$
C(\|\Delta v_1\|_2^{1/2} \|\Delta^2 v_1\|_2^{3/2} + \|\Delta v_2\|_2^{1/2} \|\Delta^2 v_2\|_2^{3/2})\|v_1 - v_2\|^2.
$$
An integration with respect to time then yields
$$
\sup_{0 \le t < T} \|u_1 - u_2\|^2 + \int_0^T \|\Delta^2 (u_1 - u_2)\|_2^2 \le C\sup _{0 \le t < T} \|v_1 - v_2\|^2\times
$$
$$
C \left(\sup _{0 \le t < T}\|v_1\|^{1/2} \int_0^T \|\Delta^2 v_1\|_2^{3/2} +
\sup _{0 \le t < T} \|v_2\|^{1/2} \int_0^T \|\Delta^2 v_2\|_2^{3/2} \right).
$$
We proceed making use of H\"older inequality to find
$$
\sup _{0 \le t < T} \|u_1 - u_2\|^2 + \int_0^T \|\Delta^2 (u_1 - u_2)\|_2^2 \le C \, T^{1/4} \sup _{0 \le t < T} \|v_1 - v_2\|^2\times
$$
$$
\left[\sup _{0 \le t < T}\|v_1\|^{1/2} \left( \int_0^T \|\Delta^2 v_{1}\|_2^{2} \right)^{3/4} +
\sup _{0 \le t < T} \|v_2\|^{1/2} \left( \int_0^T \|\Delta^2 v_{2}\|_2^{2} \right)^{3/4} \right].
$$

Combining the estimates above with the arguments in Step $1$ yields
$$
\|u_1 - u_2\|_{\mathcal{X}_T} \le C \, T^{1/4} \|v_1 - v_2\|_{\mathcal{X}_T}\times
$$
$$
\left[ \sup _{0 \le t < T}\|v_1\|^{1/2} \left( \int_0^T \|\Delta^2 v_{1}\|_2^{2} \right)^{3/4} +
\sup _{0 \le t < T} \|v_2\|^{1/2} \left( \int_0^T \|\Delta^2 v_{2}\|_2^{2} \right)^{3/4} \right]^{1/2}.
$$
Consider again the ball $B_\rho$ defined in \eq{Brho}. In this case we have
$$
\|u_i - u_\ell\|_{\mathcal{X}_T} \le C \, T^{1/4}
\sup _{0 \le t < T}\|v_i\|^{1/4} \left( \int_0^T \|\Delta^2 v_{i}\|_2^{2} \right)^{3/8}\|v_i\|_{\mathcal{X}_T}
$$
$$
\le C \, T^{1/4} \|v_i\|_{\mathcal{X}_T}^2,
$$
for $i=1,2$. Arguing as in Step $1$ of the present proof we get
$$
\|u_i - u_\ell\|_{\mathcal{X}_T} \le C \, T^{1/4} \Gamma(\rho,u_0,\lambda,f),
$$
and thus
$$
\|u_i - u_\ell\|_{\mathcal{X}_T} \le \rho,
$$
for small enough $T$. Additionally we have
$$
\|u_1 - u_2\|_{\mathcal{X}_T} \le C \, T^{1/4}\Gamma(\rho,u_0,\lambda,f)^{1/2} \|v_1 - v_2\|_{\mathcal{X}_T}.
$$
Again, for $T$ small enough we find
$$
\|u_1 - u_2\|_{\mathcal{X}_T} \le \frac{1}{2} \|v_1 - v_2\|_{\mathcal{X}_T}.
$$
The existence of a unique solution to \eq{target} follows from the application of Banach fixed point theorem to the map
\begin{eqnarray} \nonumber
\mathcal{A}: B_\sigma &\rightarrow& B_\sigma \\ \nonumber
v_i &\mapsto& u_i\qquad (i=1,2).
\end{eqnarray}
We have so found $\overline{T}=\overline{T}(\lambda,\|u_0\|)$ such that \eq{target} admits a unique solution over $[0,T]$ for all $T<\overline{T}$.

\textsc{Step 3. Blow-up.}

We argue by contradiction. Assume that $[0,T^*)$, with $T^* < \infty$, is the maximal interval of continuation of the solution, and that
$\liminf_{t \to T^*} \|u(t)\|=\gamma<\infty$. Then there exists a sequence $\{t_n\}$ such that $t_n\to T^*$ and $\|u(t_n)\|<2\gamma$ for $n$ large enough.
Take $n$ sufficiently large so that $t_n+\overline{T}(\lambda,2\gamma)>T^*$, where $\overline{T}$ is defined at the end of Step 2.
Consider $u(t_n)$ as initial condition to \eq{target}. Then Step 2 tells us that the solution may be continued beyond $T^*$, contradiction.
\end{proof}

\begin{corollary}
Let $u$ be a solution as described in Theorem~\ref{existence} during the time interval $(0,T]$. Then there exists a real number $\epsilon>0$
such that the solution can be prolonged to the interval $(0,T+\epsilon]$.
\end{corollary}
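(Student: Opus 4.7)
The plan is to restart the initial-boundary value problem \eqref{target} at time $t=T$ with the new initial datum $u(\cdot,T)$, invoke Step 2 of the proof of Theorem~\ref{existence} to obtain a short-time extension, and then glue. First, since $u\in\mathcal{X}_T$ lies in $C([0,T];W^{2,2}_0(\Omega))$, the trace $u_0^\star:=u(\cdot,T)\in W^{2,2}_0(\Omega)$ is a genuine element of the energy space with finite norm $\|u_0^\star\|$, so it is an admissible initial datum for Theorem~\ref{existence}.

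Next, I would consider the time-shifted problem with forcing $\tilde f(\cdot,s):=f(\cdot,s+T)$ (extending $f$ by zero to a slightly larger time interval if necessary) and initial condition $\tilde u(\cdot,0)=u_0^\star$. Step 2 of the proof of Theorem~\ref{existence} provides $\overline T=\overline T(\lambda,\|u_0^\star\|)>0$ and a unique solution $\tilde u\in\mathcal{X}_{\overline T/2}$. Setting $\epsilon:=\overline T/2>0$, I then define the concatenation
\[
\hat u(\cdot,t):=\begin{cases} u(\cdot,t),&t\in(0,T],\\ \tilde u(\cdot,t-T),&t\in[T,T+\epsilon]. \end{cases}
\]
Since $\tilde u(\cdot,0)=u_0^\star=u(\cdot,T)$, the function $\hat u$ is continuous into $W^{2,2}_0(\Omega)$ across $t=T$; the $L^2(0,T+\epsilon;W^{4,2}(\Omega))$ and $W^{1,2}(0,T+\epsilon;L^2(\Omega))$ norms of $\hat u$ split additively across $T$ and are therefore finite, so $\hat u\in\mathcal{X}_{T+\epsilon}$.

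The one point that deserves care is checking that $\hat u$ satisfies \eqref{target} in the weak sense on the whole of $\Omega\times(0,T+\epsilon)$, rather than only on the two open subintervals separately. Testing with time-separable functions $\varphi(x)\psi(t)$ with $\psi\in C^\infty_c(0,T+\epsilon)$, and integrating by parts in time on each piece, the only potential obstruction is an interior boundary contribution at $t=T$ coming from the jumps of $\hat u$; but the matching condition $\tilde u(\cdot,0)=u(\cdot,T)$ cancels that contribution exactly, so the two partial weak formulations combine into a genuine global one on $(0,T+\epsilon)$. Uniqueness of the extension, and the fact that the extension remains in $\mathcal{X}_{T+\epsilon}$, are inherited from the uniqueness part of Theorem~\ref{existence}. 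The Navier case is identical with $\mathcal{X}_T$ replaced by $\mathcal{Y}_T$.
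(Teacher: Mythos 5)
Your proposal is correct and follows the same route as the paper: the paper's proof simply invokes Step 3 of Theorem~\ref{existence}, whose content is exactly your argument, namely that the trace $u(\cdot,T)\in W^{2,2}_0(\Omega)$ serves as a new initial datum for the local existence result of Step 2, with existence time $\overline{T}(\lambda,\|u(T)\|)>0$ depending only on $\lambda$ and the norm of the datum. You merely make explicit the gluing and the verification of the weak formulation across $t=T$, which the paper leaves implicit.
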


\begin{proof}
This result is a consequence of Step $3$ in the proof of Theorem~\ref{existence}.
\end{proof}

It is possible to prove higher regularity of the solution if we neglect the source term.

\begin{corollary}\label{coco}
Let $u$ be a solution as described in Theorem~\ref{existence} to equation~\eqref{nonlinpar} with $\lambda=0$.
Then $u^2\in C^1(0,T;L^1(\Omega))$.
\end{corollary}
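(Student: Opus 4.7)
The plan is to identify the candidate time derivative of $u^2$ as $2uu_t$ and establish its continuity with values in $L^1(\Omega)$, then conclude $u^2 \in C^1(0,T; L^1(\Omega))$ by the standard identification of the Bochner derivative.

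First I would show continuity of $u^2$ in $L^1$. Since $\Omega \subset \mathbb{R}^2$, the Sobolev embedding $W_0^{2,2}(\Omega) \hookrightarrow L^\infty(\Omega)$ combined with the hypothesis $u \in C([0,T); W_0^{2,2}(\Omega))$ from Theorem~\ref{existence} gives $u \in C([0,T); L^p(\Omega))$ for every $p \in [1,\infty]$. The factorization $u^2(t) - u^2(s) = (u(t) - u(s))(u(t) + u(s))$ together with the Cauchy--Schwarz inequality then yields $u^2 \in C([0,T); L^1(\Omega))$.

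Next I identify the distributional derivative. Since $u \in W^{1,2}(0,T; L^2(\Omega))$ and $u \in L^\infty_{\mathrm{loc}}(0,T; L^\infty(\Omega))$ by the previous step, the chain rule for Sobolev-valued functions provides $u^2 \in W^{1,1}_{\mathrm{loc}}(0,T; L^1(\Omega))$ with $(u^2)_t = 2uu_t$ for a.e.\ $t \in (0,T)$. Using the equation with $\lambda = 0$, I write
$$2uu_t = -2u\Delta^2 u + 2u\det(D^2 u).$$
The second term $u\det(D^2 u)$ belongs to $C(0,T; L^1)$ directly: from $u \in C(W_0^{2,2})$ each entry of $D^2 u$ is in $C(L^2)$, hence $\det(D^2 u) = u_{xx}u_{yy} - u_{xy}^2 \in C(L^1)$ by bilinear Cauchy--Schwarz, and multiplication by $u \in C(L^\infty)$ preserves continuity in $L^1$.

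The main obstacle is the first term $u\Delta^2 u$, since $\Delta^2 u$ a priori only lies in $L^2(0,T; L^2(\Omega))$. I would overcome this by integrating by parts in space: for $u(t) \in W_0^{2,2}(\Omega) \cap W^{4,2}(\Omega)$ and $\phi \in C_c^\infty(\Omega)$,
$$\int_\Omega \phi\, u\Delta^2 u = \int_\Omega \phi |\Delta u|^2 + 2\int_\Omega \Delta u\, (\nabla u \cdot \nabla \phi) + \int_\Omega u\, \Delta u\, \Delta \phi,$$
whose right-hand side is continuous in $t$ by $u \in C(W^{2,2})$. A duality and density argument, together with the already proven continuity of $u\det(D^2 u)$ and the relation $u\Delta^2 u = -uu_t + u\det(D^2 u)$, then upgrades this weak continuity to continuity of $u\Delta^2 u$ in $L^1(\Omega)$, concluding that $2uu_t \in C(0,T;L^1(\Omega))$ and hence $u^2 \in C^1(0,T;L^1(\Omega))$.
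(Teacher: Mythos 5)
Your proposal follows essentially the same route as the paper's proof: both use the equation to split $uu_t$ into a determinant part, which lands in $C([0,T);L^1(\Omega))$ because $u\in C([0,T);W^{2,2}_0(\Omega))$ gives $D^2u\in C([0,T);L^2(\Omega))$ and $u\in C([0,T);L^\infty(\Omega))$, plus a biharmonic part that is controlled only in a weak topology --- your integration by parts of $u\Delta^2u$ against $u\phi$ carries exactly the same information as the paper's statements that $\Delta^2 u\in C([0,T);W^{-2,2}(\Omega))$ and hence $u\in C^1(0,T;W^{-2,2}(\Omega))$. The one step you leave vague --- upgrading the distributional continuity of $u\Delta^2 u$ to $L^1$-norm continuity by ``a duality and density argument'' --- is precisely the step the paper also asserts without justification (it simply states that $u\in C^1(0,T;W^{-2,2}(\Omega))$ combined with $u\in C([0,T);W^{2,2}_0(\Omega))$ ``yields $uu_t\in C([0,T);L^1(\Omega))$''), so your write-up is, if anything, more explicit than the paper's; just be aware that this weak-to-strong upgrade is a genuine leap in both arguments, since distributional continuity of a family of $L^1$ functions does not by itself imply norm continuity.
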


\begin{proof}
The regularity proven in Theorem~\ref{existence} for the solution $u$ to~\eqref{nonlinpar} implies that
$\det(D^2u)\in C([0,T);L^1(\Omega))$ and $\Delta^2u\in C([0,T);W^{-2,2}(\Omega))$ so that
$$u_t=-\Delta^2 u+\det(D^2u)\in C([0,T);W^{-2,2}(\Omega))$$
and, in turn, $u\in C^1(0,T;W^{-2,2}(\Omega))$. Combined with $u\in C([0,T);W^{2,2}_0(\Omega))$ this yields $uu_t\in C([0,T);L^1(\Omega))$ and, additionally,
$u^2\in C^1(0,T;L^1(\Omega))$.
\end{proof}

The following result bounds the growth of the norm of solutions.
\begin{theorem}
If $u\in \mathcal{X}_T$ solves \eqref{target} then,
\begin{eqnarray}\label{stability}
\forall \, M,\epsilon>0\quad\exists \, \tau=\tau(M,\epsilon)>0\quad: \\ \nonumber
\qquad\Big(\|u_0\|<M\, ,\quad t<\tau\Big)\ \Longrightarrow\ \|u(t)\|<M+\epsilon\ .
\end{eqnarray}
A similar statement holds for the corresponding Navier problem.
\end{theorem}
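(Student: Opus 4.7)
The plan is to derive a differential inequality for $\phi(t):=\|u(t)\|^2$, bound the nonlinear contribution via a Gagliardo--Nirenberg interpolation that \emph{separates} powers of $\|u\|$ from powers of $\|\Delta^2 u\|_2$, and conclude with a short bootstrap/continuity argument. First I would test equation \eqref{nonlinpar} against $\Delta^2 u$: the regularity $u\in L^2(0,T;W^{4,2}(\Omega))\cap W^{1,2}(0,T;L^2(\Omega))$ implicit in the membership of $\mathcal{X}_T$ (or $\mathcal{Y}_T$) makes the pairing meaningful for a.e.\ $t\in(0,T)$, and two integrations by parts kill every boundary contribution -- in the Dirichlet case because $u_t=(u_t)_\nu=0$ on $\partial\Omega$; in the Navier case because $u_t=\Delta u=0$ on $\partial\Omega$. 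This yields the energy identity
$$
\frac{1}{2}\frac{d}{dt}\|u\|^2+\|\Delta^2 u\|_2^2=\int_\Omega\det(D^2u)\,\Delta^2 u+\lambda\int_\Omega f\,\Delta^2 u.
$$

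For the nonlinear term I would invoke the planar Gagliardo--Nirenberg estimate
$$
\|D^2 u\|_\infty\le C\|u\|^{1/2}\|\Delta^2 u\|_2^{1/2},
$$
valid for $u\in W^{4,2}(\Omega)$ satisfying either set of boundary conditions, since elliptic regularity makes $\|\Delta^2 u\|_2$ equivalent to $\|u\|_{W^{4,2}(\Omega)}$. Combined with the pointwise bound $|\det(D^2 u)|\le C|D^2 u|^2$ this gives
$$
\|\det(D^2 u)\|_2\le C\|D^2 u\|_\infty\|u\|\le C\|u\|^{3/2}\|\Delta^2 u\|_2^{1/2}.
$$
Cauchy--Schwarz and weighted Young's inequality with conjugate exponents $4/3$ and $4$ then absorb a factor $\|\Delta^2 u\|_2^2$ into the left-hand side; handling $\lambda(f,\Delta^2 u)_2$ analogously produces the scalar ODE inequality
$$
\frac{d}{dt}\|u\|^2\le C\|u\|^6+2\lambda^2\|f(t)\|_2^2.
$$

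Finally, set $\phi(t):=\|u(t)\|^2$, $g(t):=2\lambda^2\|f(t)\|_2^2\in L^1(0,T)$, and $R:=(M+\epsilon)^2$. The function $\phi$ is continuous by the embedding of $\mathcal{X}_T$ (or $\mathcal{Y}_T$) into the corresponding $C([0,T);W^{2,2}_0(\Omega))$ space, and satisfies $\phi(0)<M^2<R$. Let $\tau^\ast\in(0,T]$ be the first time at which $\phi(\tau^\ast)=R$, or $\tau^\ast=T$ if no such time exists. On $[0,\tau^\ast)$ the ODE inequality integrates to
$$
\phi(t)\le M^2+CR^3\,t+\int_0^t g(s)\,ds,
$$
and the right-hand side is strictly less than $R$ whenever $t<\tau$, where $\tau=\tau(M,\epsilon,\lambda,f)>0$ is chosen so that $CR^3\tau+\int_0^\tau g<R-M^2=2M\epsilon+\epsilon^2$; such a $\tau$ exists because $\int_0^t g\to 0$ as $t\to 0^+$. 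A standard contradiction then forces $\tau^\ast\ge\tau$, so $\|u(t)\|<M+\epsilon$ for $t<\tau$ and \eqref{stability} follows; the Navier case is handled identically. I expect the main technical subtlety to be the Gagliardo--Nirenberg step separating $\|u\|$ from $\|\Delta^2 u\|_2$ with the correct exponents: the cruder bound $\|\det(D^2u)\|_2\le C\|u\|\|\Delta^2 u\|_2$ used earlier in the paper cannot be absorbed into $\|\Delta^2 u\|_2^2$ when $\|u\|$ is large, and therefore is insufficient to handle arbitrary magnitude $M$ of the initial data.
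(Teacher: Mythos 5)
Your proof is correct, and at the decisive step it takes a genuinely different route from the paper's. Both arguments test the equation with $\Delta^2 u$ and must control the pairing $(\det(D^2u),\Delta^2 u)_2$; the paper does not absorb anything into the left-hand side, but rather keeps $\|\det(D^2u)\|_2^2$ wholesale after Young's inequality, integrates in time, and invokes the estimate from Step 2 of the proof of Theorem \ref{existence} (based on the interpolation $\|\Delta u\|_\infty\le C\|\Delta u\|_2^{1/4}\|\nabla\Delta u\|_3^{3/4}$) together with H\"older's inequality in time to obtain $\int_0^\tau\|\det(D^2u)\|_2^2\le C\sup_{0\le t<T}\|u\|^{5/2}\bigl(\int_0^T\|\Delta^2u\|_2^2\bigr)^{3/4}\tau^{1/4}$, so the smallness comes from the factor $\tau^{1/4}$ multiplying finite $\mathcal{X}_T$-quantities of the given solution. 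You instead use $\|D^2u\|_\infty\le C\|u\|^{1/2}\|\Delta^2u\|_2^{1/2}$ to split the nonlinear term as $\|u\|^{3/2}\|\Delta^2u\|_2^{3/2}$, absorb the dissipation by Young, and close a self-contained ODE inequality $\frac{d}{dt}\|u\|^2\le C\|u\|^6+2\lambda^2\|f\|_2^2$ by a continuity argument. The trade-off: the paper's route avoids the elliptic estimate $\|u\|_{W^{4,2}(\Omega)}\le C\|\Delta^2u\|_2$ that your interpolation requires (for Navier conditions this should be justified by solving two Dirichlet--Laplace problems in cascade), but its $\tau$ depends on $\sup_{0\le t<T}\|u\|$ and $\int_0^T\|\Delta^2u\|_2^2$, i.e., on the particular solution; your $\tau$ depends only on $(M,\epsilon,\lambda,f)$, which matches the stated dependence $\tau=\tau(M,\epsilon)$ more faithfully and provides exactly the uniformity exploited later when \eqref{stability} is applied at large times along a global trajectory in the proof of Theorem \ref{finiteblup} (where $\lambda=0$). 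Your closing observation is also on target: the cruder bound $\|\det(D^2u)\|_2\le C\|u\|\,\|\Delta^2u\|_2$ behind \eqref{inclusion} cannot be absorbed into $\|\Delta^2u\|_2^2$, which is precisely why either your sharper split or the paper's H\"older-in-time device is needed. One minor point of rigor, common to both proofs: the differential inequality holds a.e.\ with $t\mapsto\|u(t)\|^2$ absolutely continuous, which follows from $u\in L^2(0,T;W^{4,2}(\Omega))\cap W^{1,2}(0,T;L^2(\Omega))$.
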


\begin{proof} We focus on the Dirichlet problem as the proof for the Navier case follows identically. We compute
$$
\frac{1}{2}\frac{d}{dt} \|\Delta u\|_2^2 = \langle \Delta u_t, \Delta u \rangle = \left( \Delta^2 u, u_t \right)_2 =
$$
$$
-\|\Delta^2 u\|_2^2 + \left( \Delta^2 u, \det(D^2 u) \right)_2 + \left( \Delta^2 u, \lambda f \right)_2 \le
$$
$$
-\|\Delta^2 u\|_2^2 + \|\Delta^2 u \|_2 \|\det(D^2 u)\|_2 + |\lambda| \, \|\Delta^2 u\|_2 \|f\|_2,
$$
by means of the application of the boundary conditions, the application of the equation and H\"older inequality. Young inequality leads to
$$
\frac{d}{dt} \|u\|^2 \le \|\det(D^2 u)\|_2^2 + \lambda^2 \|f\|_2^2;
$$
now choosing $0 < \tau < T$ and integrating in time along the interval $(0,\tau)$ we find
$$
\|u(\tau)\|^2 \le \|u_0\|^2 + \int_0^\tau \|\det(D^2 u)\|_2^2 + \lambda^2 \int_0^\tau \|f\|_2^2 <
$$
$$
M^2 + \int_0^\tau \|\det(D^2 u)\|_2^2 + \lambda^2 \int_0^\tau \|f\|_2^2.
$$
Arguing as in Step $2$ of the proof of Theorem~\ref{existence} we transform this inequality into
$$
\|u(\tau)\|^2 < M^2 + C \sup _{0 \le t < T} \|u\|^{5/2}
\left( \int_0^T \|\Delta^2 u\|_2^2 \right)^{3/4} \tau^{1/4} + \lambda^2 \int_0^\tau \|f\|_2^2.
$$
Using the concavity of the square root we conclude
$$
\|u(\tau)\| < M + C \sup _{0 \le t < T} \|u\|^{5/4}
\left( \int_0^T \|\Delta^2 u\|_2^2 \right)^{3/8} \tau^{1/8} + |\lambda| \left( \int_0^\tau \|f\|_2^2 \right)^{1/2}
$$
and the statement follows by choosing a small enough $\tau$.
\end{proof}

\section{The parabolic problem without source}\label{parabolicwithoutsource}

In this section we consider the parabolic problem
\begin{equation}\label{parabolicpde}
\left\{\begin{array}{rclll}
u_t + \Delta^2 u &=& \det(D^2 u)  &\qquad (x,t) \in \Omega \times(0,T), \\
u(x,0)&=&u_0(x) &\qquad x \in \Omega \\
u=u_\nu&=&0  & \qquad (x,t) \in \partial \Omega\times(0,T).
\end{array}
\right.
\end{equation}

\subsection{Preliminary lemmas}

We start with the following result.
\begin{lemma}\label{potentialwell}
If $u=u(t)$ solves \eqref{parabolicpde} then its energy
$$J(u(t))=\frac{1}{2}\int_\Omega|\Delta u(t)|^2-\int_\Omega\uuut$$
satisfies
$$\frac{d}{dt}J(u(t))=-\int_\Omega u_t(t)^2\le0\ .$$
\end{lemma}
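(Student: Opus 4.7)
The plan is to test the PDE in space against $u_t$ and to recognize each of the two resulting spatial integrals as the time derivative of one of the two pieces of $J$. Because $u\in\mathcal{X}_T$, the implication \eqref{inclusion} gives $\det(D^2 u)\in L^2(0,T;L^2(\Omega))$, so $u_t$, $\Delta^2u$ and $\det(D^2u)$ all lie in $L^2(0,T;L^2(\Omega))$ and
\[
\int_\Omega u_t^2 + \int_\Omega \Delta^2u\,u_t \;=\; \int_\Omega \det(D^2u)\,u_t
\]
is a legitimate identity for a.e.\ $t\in(0,T)$. The biharmonic contribution is the routine chain rule for a quadratic form: since $u=u_\nu=0$ on $\partial\Omega$, integration by parts in space combined with the Lions--Magenes chain rule in time yields $\int_\Omega \Delta^2u\,u_t=\int_\Omega\Delta u\,\Delta u_t=\frac{d}{dt}\bigl(\tfrac12\|\Delta u\|_2^2\bigr)$.

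The heart of the proof, and the only truly delicate point, is the analogous identity
\[
\frac{d}{dt}\int_\Omega u_xu_yu_{xy} \;=\; \int_\Omega \det(D^2 u)\,u_t,
\]
i.e.\ the trajectory-level version of the variational relation \eqref{vdetv}, expressing that $\det(D^2 v)$ is the $L^2$-gradient of the cubic functional $I(v)=\int_\Omega v_xv_yv_{xy}$. A naive differentiation under the integral sign would require spatial derivatives of $u_t$, whereas $u_t$ lies only in $L^2(\Omega)$. I would bypass this obstacle by approximation: set up a Galerkin scheme $u_k\in W^{1,2}(0,T;W_k)$ in the spirit of the proof of Theorem~\ref{existlin}, with $W_k$ spanned by the first $k$ Dirichlet biharmonic eigenfunctions and the nonlinearity projected onto $W_k$. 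For such $u_k$ everything is smooth in both variables, so testing the Galerkin equation against $(u_k)_t\in W_k$ and verifying $\frac{d}{dt}I(u_k)=\int_\Omega\det(D^2u_k)(u_k)_t$ is elementary (the cleanest route is to rewrite $I(u_k)=-\tfrac14\int_\Omega\Delta u_k\,|\nabla u_k|^2$ via \eqref{utile} and integrate by parts twice using $u_k,(u_k)_t\in W^{2,2}_0$). This yields the energy identity $\|(u_k)_t\|_2^2+\frac{d}{dt}J(u_k)=0$ at the Galerkin level.

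Finally, I would pass to the limit. The a priori estimates together with Aubin--Lions give $u_k\to u$ strongly in $C([0,T];W^{2,2}_0(\Omega))$ and weakly in $L^2(0,T;W^{4,2}(\Omega))$, while $(u_k)_t\rightharpoonup u_t$ weakly in $L^2(0,T;L^2(\Omega))$. Applying the Lipschitz-type estimate \eqref{primera} to the pair $(u_k,u)$ upgrades the first convergence to strong convergence $\det(D^2u_k)\to\det(D^2u)$ in $L^2(0,T;L^2(\Omega))$, and a weak--strong pairing then lets us pass to the limit in the Galerkin energy identity in the sense of distributions in $t$. This produces the desired $\frac{d}{dt}J(u(t))=-\int_\Omega u_t^2\le 0$, completing the argument.
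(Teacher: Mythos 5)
Your overall skeleton (test the equation with $u_t$, reduce everything to the chain rule $\frac{d}{dt}I(u)=\int_\Omega\det(D^2u)\,u_t$ for the cubic term, and prove that chain rule by approximation) is sound, but the limit passage you propose fails at the decisive step. You pass to the limit in the Galerkin energy identity $\|(u_k)_t\|_2^2+\frac{d}{dt}J(u_k)=0$ knowing only $(u_k)_t\rightharpoonup u_t$ weakly in $L^2(0,T;L^2(\Omega))$. The determinant term is indeed a weak--strong pairing, but the term $\|(u_k)_t\|_2^2$ is \emph{quadratic} in the weakly converging quantity: against a nonnegative test function $\varphi$ in time, weak convergence only gives $\int_0^T\|u_t\|_2^2\varphi\le\liminf_k\int_0^T\|(u_k)_t\|_2^2\varphi$. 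So your limit yields only the energy \emph{inequality} $\frac{d}{dt}J(u(t))\le-\|u_t(t)\|_2^2$ in the sense of distributions, not the equality asserted in the lemma. The repair stays inside your own framework: pass to the limit not in the full energy identity but in the chain-rule identity $\frac{d}{dt}I(u_k)=\int_\Omega\det(D^2u_k)\,(u_k)_t$, whose left side converges because $I(u_k)\to I(u)$ uniformly (from the strong $C([0,T];W^{2,2}_0(\Omega))$ convergence) and whose right side is a genuine weak--strong pairing; then combine the resulting identity for $u$ with your first display and the Lions--Magenes chain rule for the biharmonic part.

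A second, smaller but genuine gap: the ``a priori estimates'' for your nonlinear Galerkin scheme are nowhere established. The paper's existence proof (Theorem \ref{existence}) is a Banach fixed-point argument built on the \emph{linear} theory of Theorem \ref{existlin}; no nonlinear Galerkin scheme, and no uniform-in-$k$ bounds for one, exist in the paper, and such bounds cannot come from the Galerkin energy identity alone, since $J$ is not coercive (indeed solutions may blow up in finite time). You would have to redo the contraction estimates of Theorem \ref{existence} at the Galerkin level to get a uniform lifespan, patch intervals to cover an arbitrary compact subinterval of $[0,T)$, and invoke uniqueness to identify the limit with $u$. The paper's own proof avoids all of this: it proves the identity $\frac{d}{dt}\int_\Omega v_xv_yv_{xy}=\int_\Omega\det(D^2v)\,v_t$ for \emph{arbitrary} smooth $v\in\mathcal{X}_T$ by direct differentiation and integration by parts (using $v_t=0$ and $|\nabla v|=0$ on $\partial\Omega$), and then transfers it to the solution by density of smooth functions in $\mathcal{X}_T$ --- that is, it approximates the given solution by smooth functions rather than by approximate solutions, which sidesteps both issues above.
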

\begin{proof} Two integrations by parts show that
$$\frac{d}{dt}\int_\Omega|\Delta u|^2=2\langle\Delta u_t, \Delta u\rangle=2\int_\Omega u_t\Delta^2 u\ .$$

Note that for any smooth function $v\in \mathcal{X}_T$,
$$\frac{d}{dt}\int_\Omega\vvv=\int_\Omega(v_{xt}v_yv_{xy}+v_xv_{yt}v_{xy}+v_xv_yv_{xyt})$$
and, since $v_t=0$ on $\partial\Omega$, integrating by parts we obtain
$$\int_\Omega v_{xt}v_yv_{xy}=-\int_\Omega v_t\, \Big(v_yv_{xy}\Big)_x\ ,\qquad\int_\Omega v_xv_{yt}v_{xy}
=-\int_\Omega v_t\, \Big(v_xv_{xy}\Big)_y\ ,$$
$$\int_\Omega v_xv_yv_{xyt}=\int_\Omega v_t\, \Big(v_xv_y\Big)_{xy}\ ,$$
where, in the latter, we also used the condition that $|\nabla v|=0$ on $\partial\Omega$. By collecting terms, this proves that
$$\frac{d}{dt}\int_\Omega\vvv=\int_\Omega \detv\, v_t\ .$$
By a density argument, the same holds true for the solution $u\in \mathcal{X}_T$ to \eqref{parabolicpde}. Hence,
$$\frac{d}{dt}J(u(t))=\int_\Omega\Big(\Delta^2u-\detu\Big)\, u_t=-\int_\Omega u_t^2\ ,$$
which proves the statement.\end{proof}

\begin{lemma}\label{twonehari}
Let $u_0\in W^{2,2}_0(\Omega)$ be such that $J(u_0)<d$. Then:\par
(i) if $u_0\in\mathcal{ N}_-$ the solution $u=u(t)$ to \eqref{parabolicpde} satisfies $J(u(t))<d$ and $u(t)\in \mathcal{N}_-$ for all $t\in(0,T)$;\par
(ii) if $u_0\in\mathcal{ N}_+$ the solution $u=u(t)$ to \eqref{parabolicpde} satisfies $J(u(t))<d$ and $u(t)\in\mathcal{ N}_+$ for all $t\in(0,T)$.
\end{lemma}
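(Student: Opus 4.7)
My plan is to couple the energy identity of Lemma~\ref{potentialwell} with the characterization \eqref{mpnehari} of $d$ as $\min_{v\in\mathcal{N}}J(v)$. Since $\tfrac{d}{dt}J(u(t))=-\|u_t\|_2^2\le 0$, I immediately obtain $J(u(t))\le J(u_0)<d$ throughout the existence interval, which settles the energy half of both (i) and (ii).

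For the Nehari invariance I will argue by contradiction. The functional $N(v):=\|v\|^2-3\int_\Omega v_xv_yv_{xy}$ is continuous on $W^{2,2}_0(\Omega)$ via the embedding $W^{2,2}_0\hookrightarrow W^{1,4}_0$, and since $u\in C([0,T);W^{2,2}_0(\Omega))$ by Theorem~\ref{existence}, the map $t\mapsto N(u(t))$ is continuous. If $u(t)$ were to switch from $\mathcal{N}_\pm$ into the opposite region, there would exist a first time $t^*\in(0,T)$ with $N(u(t^*))=0$; if in addition $u(t^*)\ne 0$, then $u(t^*)\in\mathcal{N}$, hence $J(u(t^*))\ge d$ by \eqref{mpnehari}, contradicting $J(u(t^*))\le J(u_0)<d$.

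It remains to rule out $u(t^*)=0$. In case (i) this will be immediate from Theorem~\ref{NN}(iii): for every $t<t^*$ the point $u(t)$ lies in $\mathcal{N}_-$, so $\|u(t)\|^2>6d$, and continuity forces $\|u(t^*)\|^2\ge 6d>0$. Case (ii) is subtler because $0$ lies on the common topological boundary of $\mathcal{N}_+$ and $\mathcal{N}_-$; to handle it I will exploit that for $v\in\mathcal{N}_+$ one has $J(v)\ge\tfrac{1}{6}\|v\|^2$, so an extinction $u(t^*)=0$ would force $J(u(t))\searrow 0$, and combine this with the observation that a sufficiently small $W^{2,2}_0$-ball around $0$ lies entirely in $\mathcal{N}_+\cup\{0\}$ (since $|I(v)|\le C\|v\|^3$ makes $N(v)>0$ for $v$ small and nonzero), so the trajectory cannot escape into $\mathcal{N}_-$ after approaching the origin. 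The only genuine obstacle I foresee is this extinction-in-finite-time scenario in case (ii); modulo it, the proof is a clean continuity-plus-energy dichotomy.
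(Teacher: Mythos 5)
Your proposal follows essentially the same route as the paper: energy monotonicity from Lemma \ref{potentialwell} gives $J(u(t))\le J(u_0)<d$, and a continuity argument shows that exiting $\mathcal{N}_\pm$ would put the trajectory on $\mathcal{N}$ at some instant, contradicting the characterization \eqref{mpnehari} of $d$ as $\min_{\mathcal{N}}J$. In fact you are more careful than the authors: the paper's proof simply asserts that if $u(t)\notin\mathcal{N}_+$ for some $t$ then ``necessarily $u(t)\in\mathcal{N}$ for some $t$,'' silently ignoring that $\mathcal{N}_+$ and $\mathcal{N}_-$ are separated by $\mathcal{N}\cup\{0\}$ rather than by $\mathcal{N}$ alone (with the strict inequalities in the definitions, $0$ lies in none of $\mathcal{N}_+$, $\mathcal{N}_-$, $\mathcal{N}$). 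So the obstacle you flag is not a defect of your argument relative to the paper; it is a gap in the paper's own proof that you have exposed. Your treatment of case (i) via Theorem \ref{NN}(iii) (on $\mathcal{N}_-$ one has $\|u(t)\|^2>6d$, so the trajectory cannot reach $0$) closes that case completely. In case (ii), what your small-ball observation $|I(v)|\le C\|v\|^3$ actually yields, combined with forward uniqueness from Theorem \ref{existence} (if $u(t^*)=0$ then $u\equiv0$ thereafter), is invariance of $\mathcal{N}_+\cup\{0\}$; upgrading this to $u(t)\in\mathcal{N}_+$ literally as stated would require excluding finite-time extinction (a backward-uniqueness statement), which neither you nor the paper establishes. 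The standard potential-well remedy is to adopt the convention that the stable set is $\mathcal{N}_+\cup\{0\}$, under which your proof is complete and the lemma is correct as intended; as written, the paper's proof carries the identical unfilled hole, only unacknowledged.
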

\begin{proof} If $J(u_0)<d$, then $J(u(t))<d$ for all $t\in(0,T)$ in view of Lemma \ref{potentialwell}. Assume moreover that
$u_0\in\mathcal{ N}_+$ and, for contradiction, that $u(t)\not\in\mathcal{ N}_+$ for some $t\in(0,T)$. Then, necessarily $u(t)\in\mathcal{ N}$
for some $t\in(0,T)$ so that, by \eqref{mpnehari}, $J(u(t))\ge d$, contradiction. We may argue similarly if $u_0\in \mathcal{N}_-$.\end{proof}
Next, we prove a kind of {\it $L^2$-Cauchy property} for global solutions with bounded energy.

\begin{lemma}\label{cauchy}
Let $u_0\in W^{2,2}_0(\Omega) $ and let $u=u(t)$ be the corresponding solution to \eqref{parabolicpde}. Then
$$
\|u(t+\delta)-u(t)\|_2^2 \le\delta\Big(J(u(t))-J(u(t+\delta))\Big)\qquad\forall\delta>0
$$
and
\begin{equation}\label{incrementi}
\left(\frac{\|u(t+\delta)\|_2-\|u(t)\|_2}{\delta}\right)^2\le\frac{J(u(t))-J(u(t+\delta))}{\delta}\ .
\end{equation}
In particular, the map $t\mapsto\|u(t)\|_2$ is differentiable and
$$\left(\frac{d}{dt}\|u(t)\|_2\right)^2\le-\frac{d}{dt}J(u(t))\ .$$
\end{lemma}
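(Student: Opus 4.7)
The plan is to exploit the energy identity of Lemma~\ref{potentialwell}, $\frac{d}{dt}J(u(t))=-\|u_t(t)\|_2^2$, together with the regularity $u_t\in L^2(0,T;L^2(\Omega))$ coming from Theorem~\ref{existence}. For the first inequality I would begin from the Bochner identity
$$u(t+\delta)-u(t)=\int_t^{t+\delta}u_s(s)\,ds\quad\text{in }L^2(\Omega),$$
take $L^2$-norms, and apply the Cauchy--Schwarz inequality in the $s$-variable (equivalently, Jensen's inequality applied to the squared norm) together with Fubini to obtain
$$\|u(t+\delta)-u(t)\|_2^2\le\delta\int_t^{t+\delta}\|u_s(s)\|_2^2\,ds.$$
Substituting $\|u_s\|_2^2=-\tfrac{d}{ds}J(u(s))$ from Lemma~\ref{potentialwell} collapses the integral telescopically to $J(u(t))-J(u(t+\delta))$, which is precisely the first inequality.

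The second inequality is an immediate consequence of the first. The reverse triangle inequality in $L^2(\Omega)$ yields $\big|\|u(t+\delta)\|_2-\|u(t)\|_2\big|\le\|u(t+\delta)-u(t)\|_2$; squaring both sides, dividing by $\delta^2$, and invoking the first inequality reproduces exactly \eqref{incrementi}.

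For the final assertion I would invoke Corollary~\ref{coco}: since $u^2\in C^1(0,T;L^1(\Omega))$, the scalar map $t\mapsto\|u(t)\|_2^2$ lies in $C^1(0,T)$, so by the chain rule $t\mapsto\|u(t)\|_2$ is differentiable wherever $\|u(t)\|_2>0$. Letting $\delta\to 0^+$ in \eqref{incrementi}, and using that $s\mapsto J(u(s))$ is absolutely continuous with derivative $-\|u_s\|_2^2\in L^1_{\mathrm{loc}}(0,T)$ (so that the right-hand side of \eqref{incrementi} tends to $-\frac{d}{dt}J(u(t))$ at Lebesgue points), yields the claimed pointwise bound. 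The principal technical subtlety I anticipate occurs at instants $t_0$ where $\|u(t_0)\|_2=0$, because $\sqrt{\cdot}$ is not smooth at the origin; however, $\|u(\cdot)\|_2^2$ attains a minimum at such $t_0$ so its $C^1$ derivative vanishes there, and \eqref{incrementi} keeps the difference quotient of $\|u(t)\|_2$ bounded, from which differentiability with derivative zero, and hence the desired inequality, follow at such points as well.
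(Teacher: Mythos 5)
Your proposal is correct and follows essentially the same route as the paper: the first inequality via the fundamental theorem of calculus, Cauchy--Schwarz/H\"older and Fubini combined with the energy identity of Lemma~\ref{potentialwell}, the second via the reverse triangle inequality, and the derivative bound by letting $\delta\to0$ in \eqref{incrementi}. Your extra discussion of differentiability (via Corollary~\ref{coco} and the zero set of $\|u(t)\|_2$) merely fills in detail that the paper compresses into ``follows by letting $\delta\to0$.''
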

\begin{proof} By H\"older inequality, Fubini Theorem, and Lemma \ref{potentialwell}, we get
\begin{eqnarray*}
\|u(t+\delta)-u(t)\|_2^2 & = & \int_\Omega\left|\int_t^{t+\delta}u_t(\tau)\right|^2\le\delta\int_\Omega\int_t^{t+\delta}u_t(\tau)^2\\
\ & = & \delta\int_t^{t+\delta}\left(\int_\Omega u_t(\tau)^2\right)=\delta\Big(J(u(t))-J(u(t+\delta))\Big)
\end{eqnarray*}
which is the first inequality. By the triangle inequality and the just proved inequality we infer that
$$
\Big(\|u(s+\delta)\|_2-\|u(s)\|_2\Big)^2\le\|u(s+\delta)-u(s)\|_2^2 \le\delta\Big(J(u(t))-J(u(t+\delta))\Big)
$$
$$
\qquad\forall\delta>0
$$
which we may rewrite as \eqref{incrementi}. Finally, the estimate of the derivative follows by letting $\delta\to0$.\end{proof}

Also the derivative of the squared $L^2$-norm has an elegant form:

\begin{lemma}\label{ddt}
Let $u_0\in W^{2,2}_0(\Omega)$ and let $u=u(t)$ be the corresponding solution to \eqref{parabolicpde}. Then for all $t\in[0,T)$
we have
\begin{equation}\label{obtain}
\frac{1}{2}\frac{d}{dt}\|u(t)\|_2^2+\|u(t)\|^2-3\int_\Omega\uuut=0\ .
\end{equation}
\end{lemma}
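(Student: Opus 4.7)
The natural strategy is to use $u(t)$ itself as a test function in the equation \eqref{parabolicpde}. Since Theorem~\ref{existence} gives $u\in C([0,T);W^{2,2}_0(\Omega))\cap L^2(0,T;W^{4,2}(\Omega))\cap W^{1,2}(0,T;L^2(\Omega))$, and since \eqref{inclusion} together with the proof of Theorem~\ref{existence} shows $\det(D^2u)\in L^2(0,T;L^2(\Omega))$, each of the $L^2(\Omega)$-pairings we shall need is well-defined for a.e.\ $t\in(0,T)$. Multiplying the PDE by $u(t)$ and integrating over $\Omega$ therefore yields, for a.e.\ $t\in(0,T)$,
$$
\bigl(u_t(t),u(t)\bigr)_2+\bigl(\Delta^2u(t),u(t)\bigr)_2=\bigl(\det(D^2u(t)),u(t)\bigr)_2.
$$

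I would then handle the three terms separately. For the time-derivative term, Corollary~\ref{coco} gives $u^2\in C^1(0,T;L^1(\Omega))$, which legitimates
$$
\tfrac{1}{2}\tfrac{d}{dt}\|u(t)\|_2^2=\bigl(u_t(t),u(t)\bigr)_2.
$$
For the biharmonic term, since $u(t)\in W^{2,2}_0(\Omega)\cap W^{4,2}(\Omega)$ for a.e.\ $t$, two integrations by parts are justified and the boundary integrals vanish thanks to the Dirichlet conditions $u=u_\nu=0$ on $\partial\Omega$, giving
$$
\bigl(\Delta^2u(t),u(t)\bigr)_2=\int_\Omega|\Delta u(t)|^2=\|u(t)\|^2.
$$
Finally, for the Hessian term, I would invoke identity~\eqref{vdetv} of Escudero--Peral, valid for every element of $W^{2,2}_0(\Omega)$ and therefore for $u(t)$, to get
$$
\bigl(\det(D^2u(t)),u(t)\bigr)_2=3\int_\Omega u_x(t)u_y(t)u_{xy}(t).
$$
Collecting these three identities gives exactly \eqref{obtain}.

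The only subtle point—and the one I would spend care on—is making sure the formal chain rule for $\tfrac{d}{dt}\|u(t)\|_2^2$ and the integration-by-parts in $(\Delta^2u,u)_2$ are truly admissible on the regularity class provided by Theorem~\ref{existence}; both issues are settled at the level of a.e.\ $t\in(0,T)$ by the membership $u\in W^{1,2}(0,T;L^2(\Omega))\cap L^2(0,T;W^{4,2}(\Omega))$ together with Corollary~\ref{coco}. Once the identity holds a.e., continuity of both sides in $t$ (the left side by Corollary~\ref{coco}, the right side by $u\in C([0,T);W^{2,2}_0(\Omega))$ and the embedding $W^{2,2}_0\hookrightarrow W^{1,4}_0$) upgrades it to all $t\in[0,T)$.
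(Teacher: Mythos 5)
Your proposal is correct and follows essentially the same route as the paper, whose proof is the one-line instruction ``multiply \eqref{parabolicpde} by $u(t)$, integrate over $\Omega$, and apply \eqref{vdetv}.'' The extra care you devote to justifying the chain rule via Corollary~\ref{coco}, the integration by parts under the Dirichlet conditions, and the upgrade from a.e.\ $t$ to all $t\in[0,T)$ simply fills in the details the paper leaves implicit.
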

\begin{proof} Multiply \eqref{parabolicpde} by $u(t)$, integrate over $\Omega$, and apply \eqref{vdetv} to obtain \eqref{obtain}.\end{proof}

Finally, we prove that the nonlinear terms goes to the ``correct'' limit for $W^{2,2}_0(\Omega)$-bounded sequences.

\begin{lemma}\label{ltozero1}
Let $\{u_k\}$ be a bounded sequence in $W^{2,2}_0(\Omega)$. Then there exists $\overline{u}\in W^{2,2}_0(\Omega)$ such that
$u_k\rightharpoonup \overline{u}$ in $W^{2,2}_0(\Omega)$ and
$$
\int_\Omega \phi \,\, \det(D^2 u_k) \to \int_\Omega \phi \,\, \det(D^2\overline{u}) \quad \forall \phi\in W^{2,2}_0(\Omega),
$$
after passing to a suitable subsequence.
\end{lemma}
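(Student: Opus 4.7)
The plan is to combine weak compactness in $W^{2,2}_0(\Omega)$ with the compact Sobolev embedding $W^{2,2}_0(\Omega) \hookrightarrow W^{1,4}_0(\Omega)$ (valid since $\Omega \subset \mathbb{R}^2$) and the divergence form of $\det(D^2 u)$ already used in the proof of Theorem \ref{nonhDir}. First I would invoke reflexivity of $W^{2,2}_0(\Omega)$ to extract a subsequence (not relabelled) with $u_k \rightharpoonup \overline{u}$ in $W^{2,2}_0(\Omega)$, and then by Rellich--Kondrachov pass to a further subsequence along which the partial derivatives $(u_k)_x$ and $(u_k)_y$ converge strongly in $L^4(\Omega)$ to $\overline{u}_x$ and $\overline{u}_y$ respectively.

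The core step is to rewrite $\int_\Omega \phi \det(D^2 u)$ in a form that depends only on first derivatives of $u$, paired with second derivatives of $\phi$. Starting from the distributional identity
$$\det(D^2 u) = (u_x u_y)_{xy} - \tfrac{1}{2}(u_y^2)_{xx} - \tfrac{1}{2}(u_x^2)_{yy}$$
recalled from \cite{n1} in the proof of Theorem \ref{nonhDir}, two integrations by parts against $\phi \in C^\infty_c(\Omega)$ yield
$$\int_\Omega \phi \det(D^2 u) = \int_\Omega \phi_{xy}\, u_x u_y - \tfrac{1}{2}\int_\Omega \phi_{xx}\, u_y^2 - \tfrac{1}{2}\int_\Omega \phi_{yy}\, u_x^2.$$
I then propose to extend this identity to every $\phi \in W^{2,2}_0(\Omega)$ by density: the left-hand side is continuous in $\phi \in W^{2,2}_0$ thanks to the embedding $W^{2,2}_0(\Omega) \hookrightarrow L^\infty(\Omega)$ in dimension two combined with $\det(D^2 u) \in L^1(\Omega)$, while the right-hand side is continuous in $\phi$ by Cauchy--Schwarz together with $u_x u_y, u_x^2, u_y^2 \in L^2(\Omega)$.

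With this identity in hand for arbitrary $\phi \in W^{2,2}_0(\Omega)$, applying it to $u_k$ and to $\overline{u}$ reduces the claim to showing
$$\int_\Omega \phi_{xy}(u_k)_x (u_k)_y \to \int_\Omega \phi_{xy}\,\overline{u}_x \overline{u}_y,$$
together with the two analogous statements involving $\phi_{xx}(u_k)_y^2$ and $\phi_{yy}(u_k)_x^2$. Since $(u_k)_x \to \overline{u}_x$ and $(u_k)_y \to \overline{u}_y$ in $L^4(\Omega)$, the quadratic combinations $(u_k)_x(u_k)_y$, $(u_k)_x^2$ and $(u_k)_y^2$ converge strongly in $L^2(\Omega)$ to the corresponding products of $\overline{u}_x, \overline{u}_y$, and pairing with the fixed $L^2$-functions $\phi_{xy}, \phi_{xx}, \phi_{yy}$ finishes the argument.

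The main technical obstacle I anticipate is the justification of the integration-by-parts identity for $\phi \in W^{2,2}_0(\Omega)$ rather than for smooth compactly supported $\phi$, since $u_k$ only has two weak derivatives and one cannot differentiate the $u$-factor further. The density argument outlined above circumvents this by placing all remaining derivatives on $\phi$ before passing to the $W^{2,2}_0$-limit. Once that identity is secured, everything else is a standard application of the compactness of the Sobolev embedding in two dimensions.
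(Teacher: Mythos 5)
Your proposal is correct and follows essentially the same route as the paper: the paper's proof rests on exactly the identity you derive, namely $\int_\Omega w\,\det(D^2v)=\int_\Omega v_{x}v_{y}w_{xy}-\tfrac12 v_{y}^2w_{xx}-\tfrac12 v_{x}^2w_{yy}$, extended to all of $W^{2,2}_0(\Omega)$ by density, followed by the compact embedding $W^{2,2}_0(\Omega)\hookrightarrow W^{1,4}_0(\Omega)$ to pass to the limit in the quadratic first-derivative terms. The only cosmetic difference is that you obtain the identity by pairing the distributional formula from \cite{n1} with smooth test functions and then approximating in $\phi$ alone, while the paper integrates by parts for smooth $v,w$ and invokes density in both arguments; the substance is identical.
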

\begin{proof} The first part is immediate and follows from the reflexivity of the Sobolev space $W^{2,2}_0(\Omega)$. The second part cannot be deduced
in the same way because $L^1(\Omega)$ is not reflexive and consequently the sequence $\det(D^2 u_k)$ could converge to a measure.
For all $v,w \in C^\infty_0(\Omega)$ some integrations by parts show that
\begin{equation}\label{density}
\int_{\Omega} w \,\, \text{det} \left(D^2 v \right)=
\int_{\Omega} v_{x_1} v_{x_2} w_{x_1 x_2}
-\frac 12 v_{x_2}^2 w_{x_1 x_1}-\frac 12 v_{x_1}^2 w_{x_2 x_2}.
\end{equation}
A density argument shows that the same is true for all $v,w \in W^{2,2}_0(\Omega)$.
Therefore for any $\phi \in W^{2,2}_0(\Omega)$ and any $k$ we have
$$
\int_{\Omega} \phi \,\, \text{det} \left(D^2 u_k \right)=
\int_{\Omega} (u_k)_{x_1} (u_k)_{x_2} \phi_{x_1 x_2}
-\frac 12 (u_k)_{x_2}^2 \phi_{x_1 x_1}-\frac 12 (u_k)_{x_1}^2 \phi_{x_2 x_2}.
$$
By compact embedding we know that $u_k \rightarrow u$ strongly in $W_0^{1,4}(\Omega)$
since $u_k \rightharpoonup u$ weakly in $W^{2,2}_0(\Omega)$, and thus
$$
\lim_{k \to \infty} \int_{\Omega} \phi \,\, \text{det} \left(D^2 u_k \right)=
\int_{\Omega} \overline{u}_{x_1} \overline{u}_{x_2} \phi_{x_1 x_2}
-\frac 12 \overline{u}_{x_2}^2 \phi_{x_1 x_1}-\frac 12 \overline{u}_{x_1}^2 \phi_{x_2 x_2},
$$
after passing to a suitable subsequence. Applying again \eqref{density} leads to
$$
\lim_{k \to \infty} \int_{\Omega} \phi \,\, \text{det} \left(D^2 u_k \right)=
\int_{\Omega} \phi \,\, \text{det} \left(D^2 \overline{u} \right),
$$
after passing to a suitable subsequence.
\end{proof}

\subsection{Finite time blow-up}

Our first result proves the existence of solutions to \eqref{parabolicpde} which blow up in finite time.

\begin{theorem}\label{blowup}
Let $u_0\in \mathcal{N}_-$ be such that $J(u_0)\le d$. Then the solution $u=u(t)$ to \eqref{parabolicpde} blows up in finite time, that is, there exists $T>0$
such that $\|u(t)\|\to+\infty$ as $t\nearrow T$. Moreover, the blow up also occurs in the $W^{1,4}_0(\Omega)$-norm, that is,
$\|u(t)\|_{W^{1,4}_0(\Omega)}\to+\infty$ as $t\nearrow T$.
\end{theorem}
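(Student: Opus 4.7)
The plan is to implement a Payne-Sattinger style concavity argument, exploiting the Nehari set structure already worked out in the preparatory lemmas. First, Lemma \ref{twonehari}(i) guarantees that $u(t)\in \mathcal{N}_-$ and $J(u(t))\le d$ throughout the maximal existence interval $[0,T^*)$, and then Theorem \ref{NN}(iii) delivers the crucial strict inequality $\|u(t)\|^2>6d\ge 6J(u_0)$ for every $t$. On a candidate interval $[0,T_0]$, I would introduce the auxiliary functional
\[
\Phi(t)=\int_0^t\|u(s)\|_2^2\,ds+(T_0-t)\|u_0\|_2^2 .
\]
Using Lemma \ref{ddt} together with the definition of $J$ in \eqref{J}, a direct computation gives $\Phi''(t)=\|u(t)\|^2-6J(u(t))$, and the energy dissipation from Lemma \ref{potentialwell} rewrites this as
\[
\Phi''(t)=\|u(t)\|^2-6J(u_0)+6\int_0^t\|u_s(s)\|_2^2\,ds\ \ge\ 6\int_0^t\|u_s(s)\|_2^2\,ds ,
\]
the last inequality being strict thanks to $\|u(t)\|^2>6d\ge 6J(u_0)$. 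On the other hand, $\Phi'(t)=2\int_0^t(u,u_s)_2\,ds$, so Cauchy--Schwarz combined with $\int_0^t\|u\|_2^2\,ds\le \Phi(t)$ yields $(\Phi'(t))^2\le 4\Phi(t)\int_0^t\|u_s\|_2^2\,ds$. These two estimates combine to give the concavity-type inequality
\[
\Phi(t)\,\Phi''(t)\ \ge\ \tfrac{3}{2}(\Phi'(t))^2 ,
\]
equivalently $(\Phi^{-1/2})''\le 0$ on the existence interval.

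Next I would extract the finite-time blow-up from this concavity. Since $\Phi'(0)=0$ but $\Phi''(0)=\|u_0\|^2-6J(u_0)>0$, we have $\Phi'(t_1)>0$ for some small $t_1>0$, so $(\Phi^{-1/2})'(t_1)<0$. The nonnegative concave function $\Phi^{-1/2}$ must therefore reach zero at some finite time $t^*<\infty$, i.e. $\Phi(t^*)=+\infty$. A standard optimization of the bound $t^*\le t_1+\Phi(t_1)/(\frac{1}{2}\Phi'(t_1))$ over $t_1$ gives the scaling $t^*=O(\sqrt{T_0})$, so $T_0$ can be chosen large enough that $t^*<T_0$. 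But if the solution belonged to $\mathcal{X}_{T_0}$, then $\|u(\cdot)\|_2$ would be continuous and bounded on $[0,t^*]$ and $\Phi(t^*)$ would be finite, a contradiction. Hence the maximal existence time $T^*$ is finite, and Theorem \ref{existence} then forces $\|u(t)\|\to+\infty$ as $t\nearrow T^*$.

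The blow-up in $W^{1,4}_0(\Omega)$ follows almost for free from the Nehari confinement: since $u(t)\in\mathcal{N}_-$, one has $\|u(t)\|^2<3I(u(t))$, and the pointwise-in-time bound \eqref{stima}, namely $I(v)\le\frac{1}{4}\|v\|\,\|\nabla v\|_4^2$, combine to give
\[
\|u(t)\|\ <\ \tfrac{3}{4}\|\nabla u(t)\|_4^2 ,
\]
so $\|u(t)\|\to+\infty$ immediately implies $\|\nabla u(t)\|_4\to+\infty$. The main obstacle I anticipate is the careful handling of the parameter $T_0$: the concavity argument produces a blow-up time $t^*(T_0)$, and the contradiction with global existence requires the quantitative matching $t^*<T_0$, which forces a non-trivial scaling analysis. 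A secondary delicate point is the borderline case $J(u_0)=d$, where the strict inequality $\Phi''(0)>0$ still holds because $u_0\in\mathcal{N}_-$ forces $\|u_0\|^2>6d$ by Theorem \ref{NN}(iii), so the argument extends to this limiting case with no modification.
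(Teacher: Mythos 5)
Your proposal runs the same potential-well/concavity method as the paper, but in the Payne--Sattinger variant: you work on a finite window $[0,T_0]$ with the auxiliary functional $\Phi(t)=\int_0^t\|u(s)\|_2^2\,ds+(T_0-t)\|u_0\|_2^2$, whereas the paper uses Levine's variant $M(t)=\frac12\int_0^t\|u(s)\|_2^2\,ds$ on $[0,\infty)$ and gets its contradiction from a positive concave function tending to zero at infinity. Your differential inequality $\Phi\,\Phi''\ge\frac32(\Phi')^2$ is derived correctly from Lemmas \ref{potentialwell} and \ref{ddt}, and your $W^{1,4}_0(\Omega)$ step is exactly the paper's. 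However, the proof does not close as written, for two reasons.

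First, the quantitative step you yourself flag as the main obstacle is not carried out, and the way you set it up it would fail. The tangent-line bound gives a contradiction only if $t^*\le T_0$, i.e. $2\Phi(t_1)\le(T_0-t_1)\Phi'(t_1)$, which unpacks to $2\int_0^{t_1}\|u(s)\|_2^2\,ds\le(T_0-t_1)\bigl[\Phi'(t_1)-2\|u_0\|_2^2\bigr]$. Because $\Phi(t_1)$ itself contains the term $(T_0-t_1)\|u_0\|_2^2$, which grows linearly in $T_0$, enlarging $T_0$ helps only if $\Phi'(t_1)>2\|u_0\|_2^2$, that is $\|u(t_1)\|_2^2>3\|u_0\|_2^2$ at some time $t_1$; the only fact you establish, $\Phi'(t_1)>0$ for some small $t_1$, is insufficient no matter how large $T_0$ is, and the claimed scaling $t^*=O(\sqrt{T_0})$ is asserted, not proved. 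The missing ingredient is the uniform bound $\Phi''(t)=\|u(t)\|^2-6J(u(t))\ge6\bigl(d-J(u_0)\bigr)>0$ (from Theorem \ref{NN}(iii) and Lemma \ref{potentialwell}, valid when $J(u_0)<d$ strictly), which yields the linear growth $\|u(t)\|_2^2\ge\|u_0\|_2^2+6(d-J(u_0))\,t$ and hence a suitable $t_1$. This is precisely the estimate the paper extracts first ($M''>3(d-J(u_0))>0$, whence $M,M'\to\infty$) before running its concavity argument.

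Second, and relatedly, the borderline case $J(u_0)=d$ is mishandled. You claim it requires ``no modification'' because $\|u_0\|^2>6d$ still gives $\Phi''(0)>0$; but pointwise positivity of $\Phi''$ is useless for the quantitative step above, since the gap $\|u(t)\|^2-6J(u(t))$ could a priori shrink to zero and $\|u(t)\|_2^2$ could then remain below $3\|u_0\|_2^2$ forever. Note also that Lemma \ref{twonehari}, which you invoke to keep $u(t)$ in $\mathcal{N}_-$, is stated only for $J(u_0)<d$ strictly. The paper resolves the borderline case at the very start of its proof: since $u_0\in\mathcal{N}_-$ is not a stationary solution (nontrivial solutions of \eqref{eqtnstat} lie on $\mathcal{N}$), Lemma \ref{potentialwell} forces $J(u(t))<d$ strictly for all $t>0$, and one restarts from a translated initial time. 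That translation idea, or an equivalent stationarity argument, cannot be dispensed with; with it (and with the linear $L^2$-growth estimate above) your Payne--Sattinger scheme can indeed be completed.
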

\begin{proof}
Again, since $u_0\not\in \mathcal{N}$, we know that, by Lemma \ref{potentialwell}, we have $J(u(t))<d$ for all $t>0$. Therefore, possibly by translating
$t$, we may assume that $J(u(0))<d$ and, from now on, we rename $u_0=u(0)$. We use here a refinement of the concavity method by Levine \cite{levine}, see
also \cite{ps,tsu}. Assume for contradiction
that the solution $u=u(t)$ to \eqref{parabolicpde}  is global and define
$$M(t):=\frac{1}{2}\int_0^t\|u(s)\|_2^2$$
so that, by Theorem \ref{existence} and Corollary \ref{coco}, $M\in C^2(0,\infty)$. Then
$$M'(t)=\frac{\|u(t)\|_2^2}{2}$$
and, by \eqref{obtain},
$$M''(t)=-3J(u(t))+\frac{\|u(t)\|^2}{2}\ .$$
By the assumptions on $u_0$ and by Lemma \ref{twonehari} we know that $u(t)\in N_-$ for all $t\ge0$. In turn, by Theorem \ref{NN}, we infer
that $\|u(t)\|^2>6d$ for all $t\ge0$. Hence, recalling Lemma \ref{potentialwell} and the assumptions, we get
$$M''(t)\ge-3J(u_0)+\frac{\|u(t)\|^2}{2}>3(d-J(u_0))>0\qquad\mbox{for all }t\ge0\ .$$
This shows that
\begin{equation}\label{infinity}
\lim_{t\to\infty}M(t)=\lim_{t\to\infty}M'(t)=+\infty\ .
\end{equation}
By Lemma \ref{potentialwell} we also infer that
$$J(u(t))=J(u_0)-\int_0^t\|u_t(s)\|_2^2$$
so that
$$M''(t)=3\int_0^t\|u_t(s)\|_2^2-3J(u_0)+\frac{\|u(t)\|^2}{2}>3\int_0^t\|u_t(s)\|_2^2$$
since $\|u(t)\|^2>6d>6J(u_0)$. By multiplying the previous inequality by $M(t)>0$ and by using H\"older inequality, we get
$$\begin{array}{rcl}
M''(t)M(t)&\ge&\displaystyle \frac{3}{2}\int_0^t\|u_t(s)\|_2^2\, \int_0^t\|u(s)\|_2^2\ge\\
\displaystyle \frac32\left(\int_0^t\int_\Omega u(s)u_t(s)\right)^2&=&
\displaystyle \frac32\Big(M'(t)-M'(0)\Big)^2\ .
\end{array}$$
By \eqref{infinity} we know that there exists $\tau>0$ such that $M'(t)>7M'(0)$ for $t>\tau$ so that the latter inequality becomes
\begin{equation}\label{secondaprima}
M''(t)M(t)>\frac{54}{49}M'(t)^2\qquad\mbox{for all }t>\tau\ .
\end{equation}
This shows that the map $t\mapsto M(t)^{-5/49}$ has negative second derivative and is therefore concave on $[\tau,+\infty)$. Since $M(t)^{-5/49}\to0$ as
$t\to\infty$ in view of \eqref{infinity}, we reach a contradiction. This shows that the solution $u(t)$ is not global and, by Theorem
\ref{existence}, that there exists $T>0$ such that $\|u(t)\|\to+\infty$ as $t\nearrow T$.\par
Since by Lemma \ref{twonehari} we have that $u(t)\in\mathcal{ N}_-$ for all $t\ge0$, by \eqref{stima} we infer that
$$
\|u(t)\|^2<3\int_\Omega\uuut\le\frac{3}{4}\|u(t)\|\, \|u(t)\|_{W^{1,4}_0(\Omega)}^2\qquad\mbox{for all }t\ge0
$$
so that $\|u(t)\|<\frac{3}{4}\|u(t)\|_{W^{1,4}_0(\Omega)}^2$ and the $W^{1,4}_0(\Omega)$-norm blows up as
$t\nearrow T$.\end{proof}

Next, we state a blow up result without assuming that the initial energy $J(u_0)$ is smaller than the mountain pass
level $d$. Let $\lambda_1$ denote the least Dirichlet eigenvalue of the biharmonic operator in $\Omega$ and
assume that $u_0\in W^{2,2}_0(\Omega)$ satisfies
\begin{equation}\label{lambda1}
\lambda_1\|u_0\|_2^2>6J(u_0)\ .
\end{equation}
By Poincar\'e inequality $\|u_0\|^2\ge\lambda_1\|u_0\|_2^2$, we see that if $u_0$ satisfies
\eqref{lambda1}, then $u_0\in \mathcal{N}_-$. However, the energy $J(u_0)$ may be larger than $d$. For instance, let
$e^1$ denote an eigenfunction corresponding to $\lambda_1$ with the sign implying $\int_\Omega e^1_xe^1_ye^1_{xy}>0$.
If we take $u_0=\alpha e^1$, then \eqref{lambda1} will be satisfied for any $\alpha>\overline{\alpha}$ where $\overline{\alpha}$ is
the unique value of $\alpha>0$ such that $\overline{\alpha}e^1\in \mathcal{N}$. And, by \eqref{mpnehari},
we know that $J(\overline{\alpha}e^1)>d$. So, for $\alpha>\overline{\alpha}$ sufficiently close to $\overline{\alpha}$ we have
$J(\alpha e^1)>d$, that is, we are above the mountain pass level.\par
Assumption \eqref{lambda1} yields finite time blow-up.

\begin{theorem}\label{highenergy}
Assume that $u_0\in W^{2,2}_0(\Omega)$ satisfies \eqref{lambda1}. Then the solution $u=u(t)$ to \eqref{parabolicpde} blows up in finite time,
that is, there exists $T>0$ such that $\|u(t)\|\to+\infty$ and $\|u(t)\|_{W^{1,4}_0(\Omega)}\to+\infty$ as $t\nearrow T$.
\end{theorem}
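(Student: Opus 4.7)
The plan is to run Levine's concavity method as in the proof of Theorem \ref{blowup}, but to replace the Nehari-manifold lower bound on $\|u(t)\|^2$, which was obtained there from $J(u(t)) < d$, by a lower bound coming from Poincar\'e's inequality and assumption \eqref{lambda1}, so that the argument goes through even when $J(u_0)$ lies above the mountain pass level. Arguing by contradiction, I assume the solution is global and introduce $M(t)=\frac12\int_0^t\|u(s)\|_2^2\,ds$, so that $M'(t)=\frac12\|u(t)\|_2^2$.

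The first step is to track the evolution of $\|u(t)\|_2^2$. Combining Lemma \ref{ddt} with the identity $\int_\Omega u_xu_yu_{xy}=\frac12\|u\|^2-J(u)$ read off \eqref{J} gives
$$\frac{d}{dt}\|u(t)\|_2^2=\|u(t)\|^2-6J(u(t)).$$
Using Lemma \ref{potentialwell} ($J(u(t))\le J(u_0)$) and the Poincar\'e inequality ($\|u\|^2\ge\lambda_1\|u\|_2^2$), the function $y(t):=\|u(t)\|_2^2$ satisfies the differential inequality $y'(t)\ge\lambda_1\bigl(y(t)-c\bigr)$ with $c:=6J(u_0)/\lambda_1$. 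Assumption \eqref{lambda1} reads exactly $y(0)>c$, so a Gronwall-type comparison yields $y(t)-c\ge\bigl(y(0)-c\bigr)e^{\lambda_1 t}$. In particular $\|u(t)\|_2^2$ is strictly increasing and $\|u(t)\|^2>6J(u_0)\ge 6J(u(t))$ for all $t\ge 0$, which moreover places $u(t)$ in $\mathcal{N}_-$ throughout the interval of existence.

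Second, I use Lemma \ref{ddt} again to compute
$$M''(t)=\frac12\|u(t)\|^2-3J(u(t))=\frac12\|u(t)\|^2-3J(u_0)+3\int_0^t\|u_t(s)\|_2^2\,ds,$$
where the last equality uses Lemma \ref{potentialwell}. The bound $\|u(t)\|^2>6J(u_0)$ obtained in the previous step makes the first two terms strictly positive, hence $M''(t)>3\int_0^t\|u_t(s)\|_2^2\,ds>0$. Multiplying by $M(t)>0$ and applying H\"older in space-time exactly as in Theorem \ref{blowup} produces $M(t)M''(t)\ge\frac32\bigl(M'(t)-M'(0)\bigr)^2$. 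The exponential lower bound on $y(t)-c$ forces $M'(t)\to\infty$, so for $t$ larger than some $\tau$ I can absorb $M'(0)$ and obtain $M(t)M''(t)>\frac{54}{49}M'(t)^2$; this shows that $t\mapsto M(t)^{-5/49}$ is concave on $[\tau,\infty)$ yet tends to $0$ (since $M(t)\to\infty$), the desired contradiction. The blow-up in the $W^{1,4}_0(\Omega)$-norm then follows verbatim from the last lines of Theorem \ref{blowup}, using $u(t)\in\mathcal{N}_-$ together with estimate \eqref{stima}.

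The main obstacle is the fact that above the mountain pass level one no longer has the Nehari-type uniform lower bound $\|u(t)\|^2>6d$; this is precisely what assumption \eqref{lambda1} is designed to replace, and the whole argument hinges on the ODE comparison showing that once $\lambda_1 y(0)>6J(u_0)$, the same inequality propagates to all later times. Everything else is a routine transposition of Levine's concavity trick already carried out in Theorem \ref{blowup}.
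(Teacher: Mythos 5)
Your proof is correct and uses essentially the same ingredients as the paper's: the differential identity of Lemma \ref{ddt}, the energy decay of Lemma \ref{potentialwell}, the Poincar\'e/Gronwall comparison $\psi_0'\ge\lambda_1\psi_0$ triggered by \eqref{lambda1}, and Levine's concavity method with the identical constants $\tfrac{54}{49}$ and $M^{-5/49}$ from Theorem \ref{blowup}. The only difference is organizational: the paper first establishes the standalone claim \eqref{liminf} (a global solution cannot satisfy $\|u(t)\|\to\infty$ as $t\to\infty$, proved by concavity) and then shows the Gronwall bound would force exactly that, whereas you feed the Gronwall consequences ($\|u(t)\|^2>6J(u_0)$ for all $t$ and $M'(t)\to\infty$) directly into the concavity argument in a single contradiction --- a harmless and slightly more direct arrangement of the same proof.
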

\begin{proof} We first claim that if $u=u(t)$ is a global solution to \eqref{parabolicpde} then
\begin{equation}\label{liminf}
\liminf_{t\to\infty}\|u(t)\|<+\infty\ .
\end{equation}
For contradiction, assume that the solution $u=u(t)$ to \eqref{parabolicpde} is global and that
\begin{equation}\label{infiniteblup}
\|u(t)\|\to+\infty\qquad\mbox{as }t\to+\infty\ .
\end{equation}
In what follows, we use the same tools as in the proof of Theorem \ref{blowup}. Consider again
$$M(t):=\frac{1}{2}\int_0^t\|u(s)\|_2^2\ .$$
Then
$$M''(t)=-3J(u(t))+\frac{\|u(t)\|^2}{2}\to+\infty\qquad\mbox{as }t\to+\infty$$
because of \eqref{infiniteblup} and Lemma \ref{potentialwell} (the map $t\mapsto-3J(u(t))$ is increasing). This proves again \eqref{infinity}.\par
By Lemma \ref{potentialwell} and using \eqref{infiniteblup} we also infer that there exists $\tau>0$ such that
$$M''(t)>3\int_0^t\|u_t(s)\|_2^2\qquad\forall t>\tau\ .$$
By multiplying the previous inequality by $M(t)>0$ and by using H\"older inequality, we find
$$M''(t)M(t)\ge\frac32\Big(M'(t)-M'(0)\Big)^2\qquad\forall t>\tau$$
and that \eqref{secondaprima} holds, for a possibly larger $\tau$. The same concavity argument used in the proof of Theorem \ref{blowup}
leads to a contradiction. Hence, \eqref{infiniteblup} cannot occur and \eqref{liminf} follows.\par\smallskip
Next, by Poincar\'e inequality and Lemma \ref{potentialwell}, \eqref{obtain} yields
$$\frac{d}{dt}\|u(t)\|_2^2=-6J(u(t))+\|u(t)\|^2\ge-6J(u_0)+\lambda_1\|u(t)\|_2^2\ .$$
By putting $\psi_0(t):=-6J(u_0)+\lambda_1\|u(t)\|_2^2$, the previous inequality reads $\psi_0'(t)\ge\lambda_1\psi_0(t)$.
Since \eqref{lambda1} yields $\psi_0(0)>0$, this proves that $\psi_0(t)\to\infty$ as $t\to\infty$.
Hence, by invoking again Poincar\'e inequality, we see that also \eqref{infiniteblup} holds, a situation that
we ruled out by proving \eqref{liminf}. This contradiction shows that $T<\infty$.
The blow up of the $W^{1,4}_0(\Omega)$-norm follows as in the proof of Theorem \ref{blowup}.
\end{proof}

Let $u_0\in W^{2,2}_0(\Omega)$ and let $u=u(t)$ be the local solution to \eqref{parabolicpde}. According to Theorem \ref{existence}, the solution blows up at
some $T>0$ if
\begin{equation}\label{blupH2}
\lim_{t\to T}\ \|u(t)\|\ =\ +\infty\ .
\end{equation}
We wish to investigate if the (finite time) blow up also occurs in different ways. In particular, we wish to analyze the
following forms of blow up:
\begin{equation}\label{blupL2H2}
\lim_{t\to T}\ \|u\|_{L^2(0,t;W^{2,2}_0(\Omega))}\ =\ +\infty\ ,
\end{equation}
\begin{equation}\label{blupL2}
\lim_{t\to T}\ \|u(t)\|_2\ =\ +\infty\ ,
\end{equation}
\begin{equation}\label{blupL4W14}
\lim_{t\to T}\ \|u\|_{L^4(0,t;W^{1,4}_0(\Omega))}\ =\ +\infty\ .
\end{equation}
Clearly, \eqref{blupL2} implies \eqref{blupH2}. We show that also further implications hold true.

\begin{theorem}\label{differentblowup}
Let $u_0\in W^{2,2}_0(\Omega)$ and let $u=u(t)$ be the local solution to \eqref{parabolicpde}. Assume that \eqref{blupH2} occurs for
some finite $T>0$.
Then there exists $\tau\in(0,T)$ such that $u(t)\in\mathcal{ N}_-$ for all $t>\tau$.\par
Moreover:\par\noindent
(i) If \eqref{blupL2H2} occurs, then \eqref{blupL2} occurs.\par\noindent
(ii) If \eqref{blupL2} occurs, then \eqref{blupL4W14} occurs.\par
Finally, \eqref{blupL2} occurs if and only if
\begin{equation}\label{diverge3}
\lim_{t\to T}\int_0^t\left(\int_\Omega\Delta u(s)|\nabla u(s)|^2\right)\, =\, -\infty\ .
\end{equation}
\end{theorem}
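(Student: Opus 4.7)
The backbone of the argument is the identity from Lemma \ref{ddt},
$$\frac{1}{2}\frac{d}{dt}\|u(t)\|_2^2 \,=\, 3\int_\Omega u_x u_y u_{xy} - \|u(t)\|^2,$$
combined with the monotonicity $J(u(t))\le J(u_0)$ from Lemma \ref{potentialwell}. The latter rewrites as $\int_\Omega u_x u_y u_{xy} = \tfrac{1}{2}\|u(t)\|^2 - J(u(t)) \ge \tfrac{1}{2}\|u(t)\|^2 - J(u_0)$. For the first claim, membership in $\mathcal{N}_-$ is equivalent to $\|u\|^2 > 6J(u)$; since $J(u(t))\le J(u_0)$ is bounded while $\|u(t)\|\to\infty$, the inequality $\|u(t)\|^2 > 6J(u_0) \ge 6J(u(t))$ holds for every $t$ sufficiently close to $T$, which produces the required $\tau$.

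For (i), substituting the lower bound for the trilinear integral into the identity gives $\frac{d}{dt}\|u\|_2^2 \ge \|u\|^2 - 6J(u_0)$, which integrates over $[0,t]$ to
$$\|u(t)\|_2^2 \ge \|u_0\|_2^2 + \int_0^t\|u(s)\|^2\,ds - 6J(u_0)\,t.$$
Since $T<\infty$ the last term is bounded, so \eqref{blupL2H2} forces \eqref{blupL2}. For (ii), I insert the pointwise bound $\int u_x u_y u_{xy} = -\tfrac{1}{4}\int_\Omega\Delta u\,|\nabla u|^2 \le \tfrac{1}{4}\|u\|\,\|\nabla u\|_4^2$ from \eqref{utile} into the identity and use Young's inequality to absorb the $-2\|u\|^2$ term; this yields $\frac{d}{dt}\|u\|_2^2 \le C\|\nabla u\|_4^4$, and integrating gives $\|u(t)\|_2^2 \le \|u_0\|_2^2 + C\|u\|_{L^4(0,t;W_0^{1,4})}^4$, so \eqref{blupL2} implies \eqref{blupL4W14}.

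Finally, for the equivalence of \eqref{blupL2} and \eqref{diverge3}, I use \eqref{utile} again to turn the identity into $\frac{d}{dt}\|u\|_2^2 = -2\|u\|^2 - \tfrac{3}{2}\int_\Omega\Delta u\,|\nabla u|^2$, whose integrated form reads
$$-\int_0^t\!\!\int_\Omega\!\Delta u\,|\nabla u|^2\,ds \,=\, \tfrac{2}{3}\bigl(\|u(t)\|_2^2 - \|u_0\|_2^2\bigr) + \tfrac{4}{3}\int_0^t\|u(s)\|^2\,ds.$$
The forward direction is immediate: for $t$ large both terms on the right are nonnegative and the first diverges with $\|u(t)\|_2^2$. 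The converse uses the pointwise energy bound $\|u(t)\|^2 \le 2J(u_0) - \tfrac{1}{2}\int_\Omega\Delta u\,|\nabla u|^2$ obtained from $J(u(t))\le J(u_0)$ and \eqref{utile}; integrating this inequality bounds $-2\int_0^t\|u\|^2$ from below, and substituting back into the displayed identity yields $\|u(t)\|_2^2 \ge \|u_0\|_2^2 - 4J(u_0)\,t - \tfrac{1}{2}\int_0^t\int_\Omega\Delta u\,|\nabla u|^2$, so \eqref{diverge3} together with $T<\infty$ forces $\|u(t)\|_2\to\infty$. The main subtlety is sign-tracking and the consistent use of $J(u(t))\le J(u_0)$ to close the circular dependence between $\int_0^t\|u\|^2$ and the trilinear integral; the finiteness of $T$ is what allows one to discard every linear-in-$t$ term.
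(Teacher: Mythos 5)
Your proof is correct. For the eventual inclusion $u(t)\in\mathcal{N}_-$, for parts (i) and (ii), and for the implication \eqref{blupL2}$\,\Rightarrow\,$\eqref{diverge3}, your argument coincides with the paper's: both rest on Lemma \ref{potentialwell}, the identity \eqref{obtain} of Lemma \ref{ddt}, the rewriting \eqref{utile} and the estimate \eqref{stima}, combined in the same way, your only cosmetic deviation being that you deduce $u(t)\in\mathcal{N}_-$ directly from $\|u(t)\|^2>6J(u_0)\ge 6J(u(t))$ rather than by the paper's contradiction along a sequence $t_n\to T$. The genuine difference lies in the converse implication \eqref{diverge3}$\,\Rightarrow\,$\eqref{blupL2}. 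The paper argues by dichotomy: if \eqref{blupL2H2} holds it invokes part (i); if not, then $\int_0^T\|u(s)\|^2\,ds<\infty$ and the integrated identity \eqref{another} forces \eqref{blupL2}. You avoid the case split altogether: from $J(u(t))\le J(u_0)$ and \eqref{utile} you derive the pointwise bound $\|u(t)\|^2\le 2J(u_0)-\tfrac12\int_\Omega\Delta u(t)|\nabla u(t)|^2$, integrate it to control $-2\int_0^t\|u(s)\|^2\,ds$ from below, and substitute into \eqref{another}, obtaining the single inequality $\|u(t)\|_2^2\ge\|u_0\|_2^2-4J(u_0)\,t-\tfrac12\int_0^t\int_\Omega\Delta u(s)|\nabla u(s)|^2$ (the constants check out), whence \eqref{diverge3} together with $T<\infty$ yields \eqref{blupL2} at once. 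Your route is slightly more economical and quantitative, producing an explicit lower bound for $\|u(t)\|_2^2$ instead of an alternative of cases, while the paper's dichotomy recycles part (i) and requires no new computation; both ultimately use only the energy decay and the $L^2$ identity, so the difference is expository rather than substantive.
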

\begin{proof}
For contradiction, assume that there exists a sequence $t_n\to T$
such that $u(t_n)\in(\mathcal{N}\cup\mathcal{ N}_+)$. Then
$$\|u(t_n)\|^2\ge3\int_\Omega\uuutn\qquad\forall n$$
which, in view of \eqref{blupH2}, implies that
$$J(u(t_n))=\frac12 \|u(t_n)\|^2-\int_\Omega\uuutn\ge\frac16 \|u(t_n)\|^2\to+\infty$$
as $n\to\infty$. This contradicts Lemma \ref{potentialwell}.
Hence, there exists $\tau\in(0,T)$ such that $u(t)\in\mathcal{ N}_-$ for all $t>\tau$.\par
Integrating \eqref{obtain} over $[0,t]$ for $0<t<T$ yields
\begin{equation}\label{integrata}
\|u(t)\|_2^2=\|u_0\|_2^2+\int_0^t \left(-2\|u(s)\|^2+6\int_\Omega\uuus\right)\ .
\end{equation}
By Lemma \ref{potentialwell} we know that $J(u(t))\le J(u_0)$, that is,
$$2\int_\Omega\uuut\ge\|u(t)\|^2-2J(u_0)\qquad\forall t\in(0,T)\ .$$
Hence, \eqref{integrata} yields
$$
\|u(t)\|_2^2\ge\|u_0\|_2^2+\int_0^t\|u(s)\|^2-6J(u_0)\, t\qquad\forall t\in(0,T)\ .
$$
Letting $t\to T$ we see that \eqref{blupL2H2} implies \eqref{blupL2}.\par
Using \eqref{stima} into \eqref{integrata} yields
\begin{equation}\label{integrata2}
\|u(t)\|_2^2\le\|u_0\|_2^2+\int_0^t \left(-2\|u(s)\|^2+\frac32 \|u(s)\|\, \|u(s)\|_{W^{1,4}_0(\Omega)}^2\right)\ .
\end{equation}
By the Young inequality $\frac32 ab\le 2a^2+\frac{9}{32}b^2$, \eqref{integrata2} becomes
$$
\|u(t)\|_2^2\le\|u_0\|_2^2+\frac{9}{32}\|u\|_{L^4(0,t;W^{1,4}_0(\Omega))}^4\ .
$$
Letting $t\to T$, this proves that if \eqref{blupL2} occurs, then also \eqref{blupL4W14} occurs.\par
Assume now that \eqref{diverge3} occurs and, using \eqref{utile}, rewrite \eqref{integrata} as
\begin{equation}\label{another}
\|u(t)\|_2^2=\|u_0\|_2^2+\int_0^t \left(-2\|u(s)\|^2-\frac32 \int_\Omega\Delta u(s)\, |\nabla u(s)|^2\right)\ .
\end{equation}
Two cases may occur. If \eqref{blupL2H2} holds, then by the just proved statement (i), \eqref{blupL2} occurs.
If \eqref{blupL2H2} does not hold, so that $\|u\|_{L^2(0,t;W^{2,2}_0(\Omega))}$ remains bounded, then \eqref{another} shows again that \eqref{blupL2}
occurs.
Therefore, in any case, if \eqref{diverge3} occurs, then \eqref{blupL2} occurs.\par
Finally, from \eqref{another} we see that
$$
\|u(t)\|_2^2\le\|u_0\|_2^2-\frac32 \int_0^t \left(\int_\Omega\Delta u(s)\, |\nabla u(s)|^2\right)
$$
which proves that \eqref{blupL2} implies \eqref{diverge3}.\end{proof}

\subsection{Global solutions}

For suitable initial data, not only the solution is global but it vanishes in infinite time.

\begin{theorem}\label{tozero}
Let $u_0\in \mathcal{N}_+$ be such that $J(u_0)\le d$. Then the solution $u=u(t)$ to \eqref{parabolicpde} is global and
$u(t)\to0$ in $W^{4,2}(\Omega)$ as $t\to+\infty$.
\end{theorem}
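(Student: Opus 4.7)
The proof has three movements: global existence via a Nehari-manifold bound, convergence $u(t)\to 0$ in $W^{2,2}_0(\Omega)$ using the gradient-flow structure and the weak-limit argument enabled by Lemma~\ref{ltozero1}, and the bootstrap to $W^{4,2}$-convergence. I expect the third to be the most delicate.

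\textbf{Global existence.} By Lemma~\ref{twonehari}(ii) the solution stays in $\mathcal{N}_+$ with $J(u(t))\le d$ throughout $[0,T^*)$, and Theorem~\ref{NN}(ii) then gives $\|u(t)\|^2<6d$. This a priori bound rules out the blow-up criterion in Theorem~\ref{existence}, so $T^*=+\infty$. The borderline case $J(u_0)=d$ is handled by noting that $u_0\notin\mathcal{N}$ cannot be stationary, so by Lemma~\ref{potentialwell} one has $J(u(t_0))<d$ for some $t_0>0$ and one restarts from $t_0$.

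\textbf{Convergence in $W^{2,2}_0$.} Since $u(t)\in\mathcal{N}_+$ forces $J(u)>\|u\|^2/6\ge 0$ and $J$ is nonincreasing, the energy converges to some $L\in[0,d)$ and $\int_0^\infty\|u_t(s)\|_2^2\, ds<\infty$. Hence there exists $t_n\to\infty$ with $\|u_t(t_n)\|_2\to 0$; along a subsequence $u(t_n)\rightharpoonup\bar u$ weakly in $W^{2,2}_0(\Omega)$. Testing \eqref{parabolicpde} against $\phi\in W^{2,2}_0(\Omega)$ and passing to the limit, using Lemma~\ref{ltozero1} for the Monge--Amp\`ere term, one checks that $\bar u$ is a weak stationary solution. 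If $\bar u\ne 0$ then $\bar u\in\mathcal{N}$, so $J(\bar u)\ge d$; however, weak lower semicontinuity of $\|\cdot\|^2$ and continuity of $I$ under weak $W^{2,2}_0$-convergence (via the compact embedding into $W^{1,4}_0$) give $J(\bar u)\le L<d$, a contradiction. Thus $\bar u=0$. Evaluating Lemma~\ref{ddt} at $t_n$ yields $(u_t(t_n),u(t_n))_2+\|u(t_n)\|^2=3I(u(t_n))$; since $u(t_n)\to 0$ in $L^2$ and in $W^{1,4}_0$, both $(u_t(t_n),u(t_n))_2$ and $I(u(t_n))$ tend to zero, so $\|u(t_n)\|^2\to 0$, whence $L=0$. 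The inequality $\|u\|^2\le 6J(u)$ on $\mathcal{N}_+$ together with the monotone convergence $J(u(t))\searrow 0$ finally gives $\|u(t)\|\to 0$.

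\textbf{Upgrade to $W^{4,2}$ and main obstacle.} Once $\|u(t)\|$ is small enough, Theorem~\ref{existence}(ii) restarted at $u(t)$ places the solution globally in $\mathcal{X}_\infty$ with norm controlled by $\|u(t)\|$, so $\int_t^\infty\|\Delta^2 u(s)\|_2^2\,ds\to 0$ and $\int_t^\infty\|u_t(s)\|_2^2\,ds\to 0$. The differential inequality
\[
\frac{d}{dt}\|\Delta u\|_2^2+\bigl(1-C\|\Delta u\|_2\bigr)\|\Delta^2 u\|_2^2\le 0,
\]
obtained by testing against $\Delta^2 u$ and bounding the Hessian determinant as in Step~1 of the proof of Theorem~\ref{existence}, becomes for $t$ large $\frac{d}{dt}\|\Delta u\|_2^2+\frac12\|\Delta^2 u\|_2^2\le 0$, yielding exponential decay of $\|\Delta u\|_2$. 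The pointwise bound $\|u(t)\|_{W^{4,2}(\Omega)}\to 0$ is then obtained by reading the equation as $\Delta^2 u=\det(D^2 u)-u_t$, invoking elliptic regularity, and using parabolic smoothing for the biharmonic heat semigroup $e^{-\tau\Delta^2}$. The main obstacle is precisely this last step: the integrable decay of $\|u_t\|_2^2+\|\Delta^2 u\|_2^2$ must be promoted to \emph{pointwise} decay in $W^{4,2}$, which requires combining the analytic-semigroup smoothing of $e^{-\tau\Delta^2}$ with the quadratic smallness of $\det(D^2 u)$ once $\|u(t)\|$ is small.
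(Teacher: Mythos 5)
Your first two movements are correct and essentially coincide with the paper's own proof: global existence follows from Lemma \ref{twonehari}, Theorem \ref{NN}(ii) and the blow-up alternative in Theorem \ref{existence}, and the $W^{2,2}_0$-convergence uses the same subsequence argument ($u_t(t_n)\to0$ in $L^2(\Omega)$, the weak limit solves \eqref{eqtnstat} via Lemma \ref{ltozero1}, and the only stationary solution at energy level below $d$ is trivial). Your added details -- weak lower semicontinuity of the quadratic part together with weak continuity of $I$ to place $J(\overline{u})$ below $d$, and the use of Lemma \ref{ddt} at $t_n$ to force $L=0$ -- are sound and in fact make explicit steps that the paper leaves implicit.

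The genuine gap is in your third movement, which is exactly the part of the statement ($u(t)\to0$ in $W^{4,2}(\Omega)$) that you do not prove. Your differential inequality is correct and yields exponential decay of $\|\Delta u(t)\|_2$, but that is still only $W^{2,2}$-information; the promotion to pointwise $W^{4,2}$ decay is announced as the ``main obstacle'' with a named toolbox (elliptic regularity, biharmonic semigroup smoothing) and no execution. Note also that the naive Duhamel estimate does not close: applying $\Delta^2$ to $e^{-(t-\tau)\Delta^2}$ acting on the merely $L^2$-valued forcing $\det(D^2u(\tau))$ costs a factor $(t-\tau)^{-1}$, which is not integrable in $\tau$, so carrying out your plan would require fractional smoothing estimates plus a product/bootstrap argument -- a nontrivial piece of work, not a routine invocation. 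The paper avoids semigroup theory entirely and stays elliptic: along the sequence $t_n$ where $u_t(t_n)\to0$ in $L^2(\Omega)$, write $\Delta^2 u(t_n)=-u_t(t_n)+\det(D^2u(t_n))$, observe that the right-hand side is bounded in $L^1(\Omega)\subset W^{-s,2}(\Omega)$ for all $s>1$, and argue as in Theorem \ref{regularity}: elliptic regularity gives $u(t_n)$ bounded in $W^{r,2}(\Omega)$ for every $r<3$, hence by compactness $u(t_n)\to0$ strongly in $W^{2,2}_0(\Omega)$, a bootstrap then gives $\det(D^2u(t_n))\to0$ strongly in $L^2(\Omega)$, and therefore $\Delta^2u(t_n)\to0$ in $L^2(\Omega)$, i.e.\ $u(t_n)\to0$ in $W^{4,2}(\Omega)$ along the sequence; the monotonicity of $J$ (Lemma \ref{potentialwell}) and the inequality $J(u(t))\ge\|u(t)\|^2/6$ on $\mathcal{N}_+$ then upgrade the convergences to the full limit $t\to\infty$. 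Replacing your semigroup sketch by this elliptic bootstrap would complete your proof.
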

\begin{proof}
Since $u_0\not\in \mathcal{N}$, we know that it is not a stationary solution to \eqref{parabolicpde}, that is, it
does not solve \eqref{eqtnstat}. Hence, by Lemma \ref{potentialwell} we have $J(u(t))<d$ for all $t>0$.
By Lemma \ref{twonehari} and Theorem \ref{NN} we infer that $u(t)$ remains bounded in $W^{2,2}_0(\Omega)$ so that, by
Theorem \ref{blowup}, the solution is global.
If $\|u_t\|_2\ge c>0$ for all $t>0$, then by Lemma \ref{potentialwell} we would get $J(u(t))\to-\infty$ as $t\to\infty$ against
$u(t)\in N_+$, see again Lemma \ref{twonehari}. Hence, $u_t(t)\to0$ in $L^2(\Omega)$, on a suitable sequence.\par
Moreover, the boundedness of $\|u(t)\|$ implies that there exists $\overline{u}\in W^{2,2}_0(\Omega)$ such that $u(t)\weak\overline{u}$ in $W^{2,2}_0(\Omega)$
as $t\to\infty$ on the sequence. Note also that, by Lemma \ref{ltozero1}, for all $\phi\in W^{2,2}_0(\Omega)$ we have
\begin{equation*}
\int_\Omega \phi \,\, \detut \to \int_\Omega \phi \,\, {\rm det}(D^2\overline{u}).
\end{equation*}

Therefore, if we test \eqref{parabolicpde} with some $\phi\in W^{2,2}_0(\Omega)$, and we let $t\to\infty$ on the above
found sequence, we get
$$0=\int_\Omega u_t(t)\phi+\int_\Omega\Delta u(t)\Delta\phi-\int_\Omega\detut\phi\to\int_\Omega\Delta
\overline{u}\Delta\phi-{{\rm det}(D^2\overline{u})}\phi$$
which shows that $\overline{u}$ solves \eqref{eqtnstat}. Since the only solution to \eqref{eqtnstat} at energy level below $d$ is the trivial one,
we infer that $\overline{u}=0$. Writing \eqref{parabolicpde} as
$$
\Delta^2u(t)=-u_t(t)+\detut
$$
we see that $\Delta^2u(t)$ is uniformly bounded in $L^1(\Omega)$. Whence, by arguing as in the proof of Theorem \ref{regularity},
we first infer that $\Delta^2u(t)$ is bounded in $W^{-s,2}(\Omega)$ for all $s>1$ and then, by a bootstrap argument, that
$$
\Delta^2u(t)=-u_t(t)+\detut\to0 \hbox{ strongly in  } L^2(\Omega)
$$
so that $u(t)\to0$ in $W^{4,2}(\Omega)$ on the sequence.\par
By Lemma \ref{potentialwell}, we infer that $J(u(t))\to0$ regardless of how $t\to\infty$. Since $u(t)\in \mathcal{N}_+$
for all $t\ge0$, we also have that $J(u(t))\ge\|u(t)\|^2/6$ for all $t$. These facts enable us to conclude
that all the above convergences occur as $t\to\infty$, not only on some subsequence.\end{proof}

Theorems \ref{blowup} and \ref{highenergy} determine a wide class of initial data $u_0\in W^{2,2}_0(\Omega)$ which ensure that the solution to
\eqref{parabolicpde} blows up in finite time.
One can wonder whether the blow up might also occur in infinite time. This happens, for instance, in semilinear second order parabolic
equations at critical growth, see \cite{nst,quitsoupl}. If $T=+\infty$, we denote by
$$
\omega(u_0)= \bigcap_{t \ge 0} \overline{\{u(s)\::\: s \ge t\}}
$$
the $\omega$-limit set of $u_0\in W^{2,2}_0(\Omega)$, where the closure is taken in $W^{2,2}_0(\Omega)$.
We show here that infinite time blow up cannot occur for the fourth order parabolic equation \eqref{parabolicpde}.
In fact, since the nonlinearity $\detu$ is analytic, for any bounded trajectory the $\omega$-limit set consists of only one point
(see \cite{hale,haraux}) and we can prove the following statement.

\begin{theorem}\label{finiteblup}
Let $u_0\in W^{2,2}_0(\Omega)$ and let $u=u(t)$ be the local solution to \eqref{parabolicpde}. If $T=+\infty$ then the $\omega$-limit
set $\omega(u_0)$ consists of a solution to \eqref{eqtnstat}: this means that there exists a solution $\overline{u}$ to
\eqref{eqtnstat} such that $u(t)\to\overline{u}$ in $W^{2,2}_0(\Omega)$. This convergence is, in fact, also in $W^{4,2}(\Omega)$.
\end{theorem}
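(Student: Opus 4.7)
The plan is to exploit two features of \eqref{parabolicpde}: its gradient-flow structure with respect to $J$ (Lemma \ref{potentialwell}) and the real-analyticity of the nonlinearity $u\mapsto\det(D^2u)$. For gradient flows of analytic functionals on a Hilbert space, the Lojasiewicz-Simon theorem (developed for parabolic PDEs by Simon, Haraux and others, see \cite{hale,haraux}) ensures that every bounded trajectory converges as $t\to\infty$ to a single critical point. The proof therefore reduces to three tasks: (a) show the trajectory $\{u(t): t\ge 0\}$ is bounded in $W^{2,2}_0(\Omega)$; (b) show that weak accumulation points solve \eqref{eqtnstat}; (c) upgrade the convergence from $W^{2,2}_0$ to $W^{4,2}$.

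For (a), I would first use Lemma \ref{potentialwell} to conclude that $t\mapsto J(u(t))$ is non-increasing and hence either converges to some $J_\infty\in\mathbb{R}$ or diverges to $-\infty$. The latter case is incompatible with $T=+\infty$: applying the concavity argument of Theorem \ref{blowup} to $M(t)=\tfrac12\int_0^t\|u(s)\|_2^2\,ds$ and observing that $M''(t)=-3J(u(t))+\tfrac12\|u(t)\|^2$ would diverge to $+\infty$, one derives as in Theorem \ref{highenergy} that $M^{-5/49}$ is eventually concave and positive, forcing finite-time blow-up. Hence $J(u(t))\to J_\infty\in\mathbb R$ and
$$
\int_0^\infty \|u_t(s)\|_2^2\, ds = J(u_0)-J_\infty<\infty.
$$
Boundedness of the trajectory in $W^{2,2}_0$ then follows by combining \eqref{obtain}, Lemma \ref{cauchy}, and the bound $\liminf_{t\to\infty}\|u(t)\|<\infty$ established in the proof of Theorem \ref{highenergy}: any escape of $\|u(t)\|$ to $+\infty$ along some subsequence would be incompatible with the finiteness of the dissipation integral and with Poincar\'e's inequality applied to \eqref{obtain}. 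This uniform control on the orbit is the main technical obstacle.

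For (b), once the orbit is bounded in $W^{2,2}_0(\Omega)$, the $\omega$-limit set $\omega(u_0)$ is nonempty for the weak topology. Picking a sequence $t_n\to\infty$ with $\|u_t(t_n)\|_2\to 0$ (which exists by $\int_0^\infty\|u_t\|_2^2\,ds<\infty$) and extracting a weakly convergent subsequence $u(t_n)\rightharpoonup\overline{u}$, I would pass to the limit in the weak formulation of \eqref{parabolicpde} exactly as in the end of the proof of Theorem \ref{tozero}, using Lemma \ref{ltozero1} to identify the weak limit of $\det(D^2 u(t_n))$ with $\det(D^2\overline{u})$. This shows $\overline{u}$ solves \eqref{eqtnstat}. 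The Lojasiewicz-Simon theorem of \cite{hale,haraux}, applicable because $J$ is a polynomial (hence real-analytic) functional on $W^{2,2}_0(\Omega)$ and the orbit is bounded, then forces $\omega(u_0)=\{\overline{u}\}$ and $u(t)\to\overline{u}$ strongly in $W^{2,2}_0(\Omega)$ as $t\to\infty$.

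Finally, for (c), I would rewrite the equation as $\Delta^2 u(t)=-u_t(t)+\det(D^2 u(t))$ and observe that $u_t(t)\to 0$ in $L^2(\Omega)$ along a suitable sequence, while $\det(D^2 u(t))\to\det(D^2\overline{u})$ by Lemma \ref{ltozero1} together with the continuous embedding $L^1(\Omega)\subset W^{-s,2}(\Omega)$ for every $s>1$. The elliptic bootstrap argument employed in the proof of Theorem \ref{regularity} then transfers strong convergence of $\Delta^2 u(t)-\Delta^2\overline{u}$ in $W^{-s,2}(\Omega)$ into strong convergence of $u(t)$ to $\overline{u}$ in $W^{4,2}(\Omega)$, completing the proof.
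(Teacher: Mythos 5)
Your overall architecture is the same as the paper's: show $\lim_{t\to\infty}J(u(t))$ is finite and hence $\int_0^\infty\|u_t\|_2^2<\infty$, establish uniform boundedness of the orbit in $W^{2,2}_0(\Omega)$, pick a sequence $t_k\to\infty$ with $u_t(t_k)\to0$ in $L^2(\Omega)$ whose weak limit solves \eqref{eqtnstat} via Lemma \ref{ltozero1}, invoke analyticity (Lojasiewicz--Simon, \cite{hale,haraux,jendoubi,simon}) to get convergence of the whole trajectory, and bootstrap to $W^{4,2}(\Omega)$. Your steps (b) and (c) are essentially identical to the paper's, and your step showing $J_\infty\in\mathbb{R}$ via the concavity argument is acceptable (the paper instead argues through the Nehari dichotomy, using Theorems \ref{tozero} and \ref{blowup}). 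The problem is the rest of step (a), which you yourself call ``the main technical obstacle'' and then dispatch in one sentence.

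The claimed incompatibility is not a proof. From \eqref{obtain}, Lemma \ref{potentialwell} and Poincar\'e's inequality one gets
\begin{equation*}
\|u(t)\|^2=6J(u(t))+2\bigl(u(t),u_t(t)\bigr)_2\le 6J(u_0)+\tfrac{2}{\sqrt{\lambda_1}}\,\|u(t)\|\,\|u_t(t)\|_2\,,
\end{equation*}
hence $\|u(t)\|^2\le C\bigl(1+\|u_t(t)\|_2^2\bigr)$. But finiteness of $\int_0^\infty\|u_t\|_2^2$ only forces the set of late times at which $\|u_t(t)\|_2$ (hence $\|u(t)\|$) is large to have small measure; it does not exclude spikes of $\|u(t)\|$ along a sequence $t_n\to\infty$ concentrated on shorter and shorter time intervals. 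Lemma \ref{cauchy} cannot close this hole either, since it controls increments of the $L^2$-norm only, not of the $W^{2,2}$-norm. To convert ``the bad set of times has small measure'' into ``$\sup_{t}\|u(t)\|<\infty$'' one needs a quantitative bound on how fast $\|u(t)\|$ can grow in short time, and this is exactly the ingredient you never invoke: the stability estimate \eqref{stability}, which says that $\|u_0\|<M$ and $t<\tau(M,\epsilon)$ imply $\|u(t)\|<M+\epsilon$. The paper's proof proceeds precisely this way: it first proves $u\in L^\infty(\mathbb{R}_+;L^2(\Omega))$ (claim \eqref{claim2}, via the dichotomy $\Theta_\pm$ exploiting the monotonicity of $t\mapsto\|u(t)\|_2$ on $\mathcal{N}_-$; your Poincar\'e inequality above would in fact let you bypass this), then derives the pointwise bound \eqref{L4}, and finally combines the shrinking measure of $\Gamma_t=\{s\ge t:\ \|u(s)\|^4\ge c_1+1\}$ with \eqref{stability} to obtain \eqref{eccola}, i.e.\ $u\in L^\infty(\mathbb{R}_+;W^{2,2}_0(\Omega))$. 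Without \eqref{stability} (or an equivalent local growth estimate coming from the parabolic structure, as in Step 2 of the proof of Theorem \ref{existence}), your step (a) remains a genuine gap, and since steps (b) and (c) as well as the Lojasiewicz--Simon argument all presuppose the bounded orbit, the proposal as written does not prove the theorem.
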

\begin{proof}
If $u=u(t)$ is a global solution to \eqref{parabolicpde}, then we know that \eqref{liminf} holds.
We claim that if
\begin{equation}\label{infiniteblup2}
C:=\liminf_{t\to\infty}\|u(t)\|<\limsup_{t\to\infty}\|u(t)\|=+\infty\ ,
\end{equation}
then $J(u(t))\ge d$ for all $t\ge0$ and $C>0$.
By Lemma \ref{potentialwell}, the map $t\mapsto J(u(t))$ admits a limit as $t\to\infty$. If this limit were smaller than $d$
(including $-\infty$), then
we would have $J(u(\overline{t}))<d$ for some $\overline{t}>0$. By \eqref{mpnehari} this implies that either
$u(\overline{t})\in\mathcal{N}_+$ or $u(\overline{t})\in \mathcal{N}_-$.
In the first situation, Theorem \ref{tozero} implies that $\|u(t)\|_{W^{4,2}(\Omega)}\to0$ as $t\to\infty$. In the second situation, Theorem \ref{blowup} implies that
$\|u(t)\|\to\infty$ in finite time. In both cases we contradict \eqref{infiniteblup2}. Hence, if \eq{infiniteblup2} holds then
\begin{equation}\label{limit}
J(u(t))\ge\Upsilon:=\lim_{t\to\infty}J(u(t))\ge d\ .
\end{equation}

If $C=0$ in \eq{infiniteblup2}, then there exists a divergent sequence $\{t_m\}$ such that $\|u(t_m)\|\to0$ so that $J(u(t_m))\to0$,
contradicting \eqref{limit}. By Lemma \ref{potentialwell} we know that
\neweq{AA}
\int_0^\infty\|u_t(t)\|_2^2 =J(u_0)-\Upsilon
\endeq
so that $u_t\in L^2(\mathbb{R}_+;L^2(\Omega))$ and
\begin{equation}\label{claim}
\liminf_{t\to\infty}\ \|u_t(t)\|_2\ =\ 0\ .
\end{equation}

We claim that also
\begin{equation}\label{claim2}
u\in L^\infty(\mathbb{R}_+;L^2(\Omega))\ .
\end{equation}
If $u\in L^\infty(\mathbb{R}_+; W^{2,2}_0(\Omega))$, then the statement follows directly from Poincar\'e inequality. So, assume that $t\mapsto\|u(t)\|$
is not bounded in $\mathbb{R}_+$ so that, by \eqref{liminf}, we know that necessarily \eqref{infiniteblup2} holds.
Let $\Lambda:=\max \{2C,8J(u_0)\}>0$ and consider the two sets
$$\Theta_-:=\{t\ge0;\, \|u(t)\|^2\le\Lambda\}\ ,\qquad\Theta_+:=\{t\ge0;\, \|u(t)\|^2>\Lambda\}\ .$$
We have $\Theta_-\cup\Theta_+=\mathbb{R}_+$ and, in view of \eqref{infiniteblup2}, both $\Theta_+\neq\emptyset$ and $\Theta_-\neq\emptyset$.
Note that for $t\in\Theta_+$ we have $\|u(t)\|^2>8J(u_0)\ge 8J(u(t))$ in view of Lemma \ref{potentialwell} so that $u(t)\in\mathcal{N}_-$
and, by \eqref{obtain}, the map $t\mapsto\|u(t)\|_2$ is strictly increasing in $\Theta_+$.
By \eq{infiniteblup2} we know that $t$ changes infinitely many times between $\Theta_+$ and $\Theta_-$.
As long as $t\in\Theta_-$, by Poincar\'e inequality we have $\lambda_1\|u(t)\|_2^2\le\|u(t)\|^2\le\Lambda$ and therefore $\|u(t)\|_2$ remains uniformly bounded.
Moreover, by the just proved monotonicity, as long as $t\in\Theta_+$ we know that $\|u(t)\|_2^2\le\|u(\overline{t})\|_2^2\le\Lambda/\lambda_1$ where
$\overline{t}$ is the first instant where $t$ exists $\Theta_+$. This proves \eqref{claim2}.\par
Next, note that if $c$ denotes positive constants which may vary from line to line, we may rewrite \eqref{obtain} as
\begin{eqnarray*}
\|u(t)\|^2 &=& 6J(u(t))+2\int_\Omega u(t)u_t(t)\le6J(u_0)+2\int_\Omega|u(t)u_t(t)|\\
&\le& c\Big(1+\|u(t)\|_2\, \|u_t(t)\|_2\Big)\le c\Big(1+\|u_t(t)\|_2\Big)
\end{eqnarray*}
where we also used Lemma \ref{potentialwell} (first inequality), H\"older inequality (second inequality), and \eqref{claim2} (third inequality).
By squaring, we obtain
\begin{equation}\label{L4}
\|u(t)\|^4 \le c_1+c_2\|u_t(t)\|_2^2\ .
\end{equation}
Put $\Gamma_t:=\{s\ge t;\, \|u(s)\|^4\ge c_1+1\}$ where $c_1$ is as in \eqref{L4} and let $|\Gamma_t|$ denote the measure of $\Gamma_t$.
Then $|\Gamma_t|\to0$ as $t\to\infty$ because of \eqref{AA} and \eqref{L4}. Take $M:=\sqrt[4]{c_1+1}$, $\epsilon=1$, and let $\tau>0$ be the number
given by \eqref{stability}. Then take $t$ sufficiently large so that $|\Lambda_t|<\tau$. By \eqref{stability}, for any such $t$ we have
$\|u(t)\|<M+1$, which proves that
\begin{equation}\label{eccola}
u\in L^\infty(\mathbb{R}_+;W^{2,2}_0(\Omega))\ .
\end{equation}

By \eqref{claim} there exists a diverging sequence $\{t_k\}$ such that $u_t(t_k)\to0$ in $L^2(\Omega)$ as $k\to\infty$. By \eqref{eccola},
up to a further subsequence, we have $u(t_k)\weak\overline{u}$ in $W^{2,2}_0(\Omega)$ for some $\overline{u}\in W^{2,2}_0(\Omega)$.
By testing \eqref{parabolicpde} with $\varphi\in C^\infty_0(\Omega)$ and letting $k\to\infty$, we see that $\overline{u}$ solves \eqref{eqtnstat}.
Moreover the analyticity of the nonlinearity implies that any subsequence converges to the same limit $\overline{u}$ solution of the stationary
problem, see \cite{hale,haraux,jendoubi,simon}. Finally, the convergence may be improved to $W^{4,2}(\Omega)$ by arguing as in Theorem \ref{tozero}.\end{proof}

Next, we prove a squeezing property which is typical of dissipative dynamical systems.
Since \eqref{parabolicpde} is indeed dissipative when dealing with global solutions, we restrict our attention to this case. Consider the
sequence of Dirichlet eigenvalues $\{\lambda_m\}$ of the biharmonic operator and denote by $\{e^m\}$ the
sequence of corresponding $W^{2,2}_0(\Omega)$-normalized orthogonal eigenfunctions. It is well-known that
$$v=\sum_{m=1}^\infty\ \left(\int_\Omega\Delta v\Delta e^m\right)\, e^m\qquad\forall v\in W^{2,2}_0(\Omega)$$
where the series converges in the $W^{2,2}_0(\Omega)$-norm. For all $k\ge2$ denote by $P_k$ the projector onto the
space $H_k$ spanned by $\{e^1,...,e^{k-1}\}$ so that
$$P_kv=\sum_{m=1}^{k-1}\ \left(\int_\Omega\Delta v\Delta e^m\right)\, e^m\qquad\forall v\in W^{2,2}_0(\Omega)\ .$$
Finally, we recall the improved Poincar\'e inequality
\begin{equation}\label{poin}
\lambda_k\|v\|_2^2\le\|v\|^2\qquad\forall v\in H_k^\perp
\end{equation}
where $H_k^\perp$ denotes the orthogonal complement of $H_k$, namely the closure of the infinite dimensional space spanned by $\{e^k,e^{k+1},...\}$.
Roughly speaking, the next result states that the asymptotic behavior of the solutions to \eqref{parabolicpde} is determined by a finite number of modes.

\begin{theorem}\label{modes}
Let $u=u(t)$ and $v=v(t)$ be the solutions to \eqref{parabolicpde} corresponding to initial data $u_0\in W^{2,2}_0(\Omega)$ and $v_0\in W^{2,2}_0(\Omega)$,
respectively. Assume that $u$ and $v$ are global solutions to \eqref{parabolicpde}. There exists $k\in\mathbb{N}$,
depending only on $\|u\|_{L^\infty(\mathbb{R}_+;W^{2,2}_0(\Omega))}$ and $\|v\|_{L^\infty(\mathbb{R}_+;W^{2,2}_0(\Omega))}$, such that
if $P_ku(t)=P_kv(t)$ for all $t\ge0$, then
$$\lim_{t\to\infty}\|u(t)-v(t)\|_{W^{s,2}(\Omega)}=0\qquad\mbox{for all }s\in[0,2)\ .$$
\end{theorem}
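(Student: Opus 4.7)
Set $w(t):=u(t)-v(t)$. Since $P_k u(t)=P_k v(t)$, one has $w(t)\in H_k^\perp$ for every $t\ge 0$; by Theorem~\ref{finiteblup} the globality of $u$ and $v$ provides a finite constant $R:=\|u\|_{L^\infty(\mathbb{R}_+;W^{2,2}_0(\Omega))}+\|v\|_{L^\infty(\mathbb{R}_+;W^{2,2}_0(\Omega))}$. Subtracting the two copies of \eqref{parabolicpde} and testing the resulting equation with $w$ gives the basic energy identity
$$\frac{1}{2}\frac{d}{dt}\|w\|_2^2+\|w\|^2=\int_\Omega w\bigl[\det(D^2u)-\det(D^2v)\bigr].$$
The plan is to use the integration-by-parts identity \eqref{density} to rewrite the right-hand side as a trilinear form in $\nabla u$, $\nabla v$, $\nabla w$ and $D^2w$, exploit the planar Gagliardo-Nirenberg and Young inequalities to split the resulting bound into a part absorbed by $\|w\|^2$ and a part controlled by $\|w\|_2^2$, and finally invoke the improved Poincar\'e inequality \eqref{poin} on $H_k^\perp$ to force exponential decay of $\|w(t)\|_2$ as soon as $k$ is large enough.

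Concretely, let $B(v,\psi):=\int_\Omega v_xv_y\psi_{xy}-\tfrac12 v_y^2\psi_{xx}-\tfrac12 v_x^2\psi_{yy}$, so that \eqref{density} reads $\int_\Omega\psi\,\det(D^2v)=B(v,\psi)$ for every $v,\psi\in W^{2,2}_0(\Omega)$. Using the factorizations $u_xu_y-v_xv_y=w_xu_y+v_xw_y$, $u_y^2-v_y^2=w_y(u_y+v_y)$, $u_x^2-v_x^2=w_x(u_x+v_x)$, one obtains
$$B(u,w)-B(v,w)=\int_\Omega (w_xu_y+v_xw_y)w_{xy}-\tfrac12 w_y(u_y+v_y)w_{xx}-\tfrac12 w_x(u_x+v_x)w_{yy}.$$
H\"older in $L^4\cdot L^4\cdot L^2$ and the Sobolev embedding $W^{2,2}_0(\Omega)\hookrightarrow W^{1,4}_0(\Omega)$ then yield
$$\left|\int_\Omega w[\det(D^2u)-\det(D^2v)]\right|\le C(\|\nabla u\|_4+\|\nabla v\|_4)\|\nabla w\|_4\|w\|\le CR\,\|\nabla w\|_4\,\|w\|.$$
Applying the two-dimensional Gagliardo-Nirenberg inequality $\|\nabla w\|_4\le C\|w\|_2^{1/4}\|w\|^{3/4}$ and then Young's inequality with exponents $8/7$ and $8$ to $CR\|w\|_2^{1/4}\|w\|^{7/4}$, one splits the estimate as $\tfrac12\|w\|^2+K\|w\|_2^2$ with some $K=K(R)>0$ independent of $k$.

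Substituting back into the energy identity gives $\tfrac{d}{dt}\|w\|_2^2+\|w\|^2\le 2K\|w\|_2^2$. Since $w(t)\in H_k^\perp$, \eqref{poin} furnishes $\|w\|^2\ge\lambda_k\|w\|_2^2$; choosing $k$ to be the smallest integer with $\lambda_k\ge 2K+1$ (such $k$ exists because $\lambda_k\to\infty$, and it depends only on $R$) produces
$$\frac{d}{dt}\|w(t)\|_2^2+\|w(t)\|_2^2\le 0,$$
so $\|w(t)\|_2\le\|u_0-v_0\|_2\,e^{-t/2}\to 0$. Since $\|w(t)\|\le R$ uniformly in $t$, the real interpolation inequality $\|w\|_{W^{s,2}(\Omega)}\le C\|w\|_2^{1-s/2}\|w\|^{s/2}$ for $0\le s<2$ gives the desired conclusion. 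The main obstacle is the nonlinear bound: a direct $L^2$-estimate of $\det(D^2u)-\det(D^2v)$ would require $D^2u,D^2v\in L^\infty_tL^4$, a regularity not available from the uniform $W^{2,2}_0$-bound alone, and it is precisely \eqref{density} that allows the estimate to be closed using only the $W^{1,4}$-information on $u$ and $v$.
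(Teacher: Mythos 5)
Your proof is correct, and its skeleton is the same as the paper's: the energy identity for $w=u-v$, an estimate of the nonlinear term, Gagliardo--Nirenberg, the improved Poincar\'e inequality \eqref{poin} on $H_k^\perp$ to force exponential decay of $\|w(t)\|_2$ once $\lambda_k$ exceeds a threshold depending only on $R$, and interpolation to reach $W^{s,2}(\Omega)$ for $s\in[0,2)$. The genuine difference is in how the nonlinear term is handled. The paper factorizes $\det(D^2u)-\det(D^2v)$ pointwise at the level of \emph{second} derivatives (e.g.\ $u_{xx}u_{yy}-v_{xx}v_{yy}=u_{xx}w_{yy}+v_{yy}w_{xx}$), obtains the $L^1$ bound $\|\det(D^2u)-\det(D^2v)\|_1\le\mu(\|u\|+\|v\|)\|w\|$, and pairs it against $\|w\|_\infty$ by H\"older, using the Gagliardo--Nirenberg inequality $\|w\|_\infty\le c\|w\|^{1/2}\|w\|_2^{1/2}$; this gives the bound $\mu R\,\|w\|^{3/2}\|w\|_2^{1/2}$, which is then absorbed by a direct sign comparison (no Young inequality): once $\lambda_k^{1/4}>\mu R$, the factor $\mu R\|w\|_2^{1/2}-\|w\|^{1/2}$ is negative, and a second application of \eqref{poin} yields the exponential rate. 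You instead pass through the weak form \eqref{density}, shift both second derivatives onto $w$, factorize at the \emph{gradient} level, and estimate the trilinear form by $L^4\cdot L^4\cdot L^2$ H\"older together with $\|\nabla w\|_4\le C\|w\|_2^{1/4}\|w\|^{3/4}$, obtaining $CR\,\|w\|_2^{1/4}\|w\|^{7/4}$, which you absorb via Young and then Poincar\'e. Both routes close; your threshold ($\lambda_k\gtrsim R^8$) is worse than the paper's ($\lambda_k>(\mu R)^4$), but since $k$ depends only on $R$ in either case this is immaterial. What your version buys is that it never multiplies two second derivatives of the solutions: it uses only the weak formulation of the determinant and the $W^{1,4}$ control of $u,v$, which is arguably the more robust formulation for weak solutions; what the paper's version buys is brevity, dispensing with \eqref{density} and Young. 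One small correction to your closing remark: the paper does close the estimate \emph{without} \eqref{density}, precisely via the $L^1$--$L^\infty$ pairing just described, so \eqref{density} is a convenient rather than indispensable device here --- though your observation that a naive $L^2$ bound on the determinant difference is out of reach with only uniform $W^{2,2}_0$ bounds is accurate.
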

\begin{proof} Since $u,v\in C(\mathbb{R}_+;W^{2,2}_0(\Omega))$, by Theorem \ref{finiteblup} we know that
\begin{equation}\label{bothbounded}
u,v\in L^\infty(\mathbb{R}_+;W^{2,2}_0(\Omega))\ .
\end{equation}
We first claim that there exists $\mu>0$ such that for all $u,v\in W^{2,2}_0(\Omega)$ we have
\begin{equation}\label{picone}
\int_\Omega\Big(\detu-\detv\Big)(u-v)\le\mu\Big(\|u\|+\|v\|\Big)\|u-v\|\, \|u-v\|_\infty\ .
\end{equation}
To see this, let us rewrite
$$
\detu-\detv =
$$
$$
u_{xx}(u_{yy}-v_{yy})+v_{yy}(u_{xx}-v_{xx})+u_{xy}(v_{xy}-u_{xy})+v_{xy}(v_{xy}-u_{xy})
$$
so that, by H\"older inequality,
\begin{eqnarray*}
\|\detu-\detv\|_1 &\le& \|u_{xx}\|_2\, \|u_{yy}-v_{yy}\|_2+\|v_{yy}\|_2\, \|u_{xx}-v_{xx}\|_2\\
\ &\ & +\|u_{xy}\|_2\, \|v_{xy}-u_{xy}\|_2+\|v_{xy}\|_2\, \|v_{xy}-u_{xy}\|_2\\
\ &\le& \mu\Big(\|u\|+\|v\|\Big)\|u-v\|\ .
\end{eqnarray*}
Hence, by applying once more, H\"older inequality we obtain
\begin{eqnarray*}
\int_\Omega\Big(\detu-\detv\Big)(u-v) &\le& \|\detu-\detv\|_1\, \|u-v\|_\infty\\
\ &\le& \mu\Big(\|u\|+\|v\|\Big)\|u-v\|\, \|u-v\|_\infty\ ,
\end{eqnarray*}
which proves \eqref{picone}.\par
By subtracting the two equations relative to $u$ and $v$ we obtain
\begin{equation}\label{w}
w_t+\Delta^2w=\detu-\detv
\end{equation}
where $w(t)=u(t)-v(t)$. Multiply \eqref{w} by $w$ and integrate over $\Omega$ to obtain
$$\frac12 \frac{d}{dt}\|w(t)\|_2^2+\|w(t)\|^2=\int_\Omega\Big(\detut-\detvt\Big)w(t)\ .$$
By \eqref{picone} the latter may be estimated as
$$\frac12 \frac{d}{dt}\|w(t)\|_2^2+\|w(t)\|^2\le\mu\Big(\|u(t)\|+\|v(t)\|\Big)\|w(t)\|\, \|w(t)\|_\infty\ .$$
In turn, by the Gagliardo-Nirenberg inequality $\|w\|_\infty\le c\|w\|^{1/2}\, \|w\|_2^{1/2}$ (see \cite{gagliardo,nirenberg}) we obtain
$$\frac12 \frac{d}{dt}\|w(t)\|_2^2+\|w(t)\|^2\le
$$
$$
\mu\Big(\|u\|_{L^\infty(\mathbb{R}_+;W^{2,2}_0(\Omega))}+\|v\|_{L^\infty(\mathbb{R}_+;W^{2,2}_0(\Omega))}\Big)\|w(t)\|^{3/2}\, \|w(t)\|_2^{1/2}\ .$$
By recalling the assumption that $P_kw(t)=0$, that is $w(t)\in H_k^\perp$, and by using \eqref{poin} we then get
\begin{eqnarray*}
\frac12 \frac{d}{dt}\|w(t)\|_2^2\qquad\qquad\qquad\qquad\qquad\qquad\qquad\qquad\qquad\qquad\qquad\qquad\qquad\ \\
\le\! \bigg[\mu\!\Big[\|u\|_{L^\infty(\mathbb{R}_+;W^{2,2}_0(\Omega))}\!\!+\!\!\|v\|_{L^\infty(\mathbb{R}_+;W^{2,2}_0(\Omega))}\Big]\!
\|w(t)\|_2^{1/2}\!\!-\!\|w(t)\|^{1/2}\bigg]\|w(t)\|^{3/2}\\
\le\! \bigg[\mu\Big[\|u\|_{L^\infty(\mathbb{R}_+;W^{2,2}_0(\Omega))}\!+\!\|v\|_{L^\infty(\mathbb{R}_+;W^{2,2}_0(\Omega))}\Big]\!-\!\lambda_k^{1/4}\bigg]
\|w(t)\|_2^{1/2} \|w(t)\|^{3/2}.
\end{eqnarray*}
Take $k$ large enough so that $\lambda_k\!>\!\mu^4\Big(\|u\|_{L^\infty(\mathbb{R}_+;W^{2,2}_0(\Omega))}\!+\!\|v\|_{L^\infty(\mathbb{R}_+;W^{2,2}_0(\Omega))}\Big)^4$
and put
$$\omega_k:=\lambda_k^{1/4}-\mu\Big(\|u\|_{L^\infty(\mathbb{R}_+;W^{2,2}_0(\Omega))}+\|v\|_{L^\infty(\mathbb{R}_+;W^{2,2}_0(\Omega))}\Big)>0\ ,
$$
$$
\qquad\delta_k:=2\omega_k\lambda_k^{3/4}>0\ .$$
By \eqref{poin} we may finally rewrite the last inequality as
$$\frac{d}{dt}\|w(t)\|_2^2\le-2\omega_k\|w(t)\|_2^{1/2}\|w(t)\|^{3/2}\le-\delta_k\|w(t)\|_2^2$$
which, upon integration, gives
\begin{equation}\label{nouniform}
\|w(t)\|_2^2\le\|w(0)\|_2^2\, e^{-\delta_kt}\qquad\forall t\ge0
\end{equation}
and the statement follows for $s=0$ by letting $t\to\infty$.\par
By interpolation, we know that
$$\|u(t)-v(t)\|_{W^{s,2}(\Omega)}^2\le\|u(t)-v(t)\|_2^{2-s}\|u(t)-v(t)\|^s\qquad\mbox{for all }s\in(0,2)\, ;$$
the statement follows for all such $s$ by combining \eqref{bothbounded} and \eqref{nouniform}.\end{proof}

\section{Further results and open problems}\label{open}

\subsubsection*{Monotonicity of the $L^2$-norm.}

It is clear that the limits in \eqref{blupL2H2} and in \eqref{blupL4W14} do exist due to the fact that they involve increasing functions of $t$.
Less obvious is the existence of the limit in \eqref{blupL2}. The next result gives some monotonicity properties of the map $t\mapsto\|u(t)\|_2$
which guarantee that also the limit in \eqref{blupL2} exists.

\begin{proposition}\label{L2}
Let $\lambda_1$ denote the least eigenvalue of the biharmonic operator under Dirichlet boundary conditions in $\Omega$. Take
$u_0\in W^{2,2}_0(\Omega)$ and let $u=u(t)$ denote the corresponding local solution to \eqref{parabolicpde}.\par
(i) If \eqref{lambda1} holds, then the map $t\mapsto\|u(t)\|_2$ is strictly increasing on $[0,T)$.\par
(ii) If $u_0\in \mathcal{N}_-$ and $J(u_0)<d$, then the map $t\mapsto\|u(t)\|_2$ is strictly increasing on $[0,T)$.\par
(iii) If $u_0\in \mathcal{N}_+$ and $J(u_0)<d$, then the map $t\mapsto\|u(t)\|_2$ is strictly decreasing on $[0,T)$.\par
Moreover, the map $t\mapsto\|u(t)\|_2$ is strictly increasing (resp.\ decreasing) whenever $u(t)\in \mathcal{N}_-$ (resp.\ $u(t)\in \mathcal{N}_+$).\par
Finally, the map $t\mapsto\|u(t)\|_2$ is differentiable and
$$\left(\frac{d}{dt}\|u(t)\|_2\right)^2\le-\frac{d}{dt}J(u(t))\ .$$
\end{proposition}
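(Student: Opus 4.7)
The plan is to assemble the statement as a sequence of applications of identity \eqref{obtain}, together with the already established Lemmas \ref{potentialwell}, \ref{twonehari}, and \ref{cauchy}. The core ingredient is the rewriting of Lemma \ref{ddt} in the form
$$\frac{d}{dt}\|u(t)\|_2^2 = -2\|u(t)\|^2 + 6\int_\Omega u_x(t)\, u_y(t)\, u_{xy}(t),$$
which reads the sign of $\frac{d}{dt}\|u(t)\|_2^2$ directly off the definition of $\mathcal{N}_\pm$.

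First I would handle the \emph{moreover} claim, since it is the engine for (ii) and (iii). The defining inequalities $\|u(t)\|^2 < 3\int_\Omega u_x u_y u_{xy}$ for $\mathcal{N}_-$ and $\|u(t)\|^2 > 3\int_\Omega u_x u_y u_{xy}$ for $\mathcal{N}_+$ make the right-hand side of the display above strictly positive, respectively strictly negative. Since $\|u(t)\|_2 \ge 0$, strict monotonicity of the square transfers to $\|u(t)\|_2$ itself. Then (ii) and (iii) follow at once: under $J(u_0)<d$, Lemma \ref{twonehari} guarantees that the trajectory stays in the same component $\mathcal{N}_-$ (resp.\ $\mathcal{N}_+$) throughout $[0,T)$, so the moreover part applies on the whole interval.

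For (i), I would instead eliminate the cubic term using $\int_\Omega u_x u_y u_{xy}=\tfrac12\|u\|^2-J(u)$, producing
$$\frac{d}{dt}\|u(t)\|_2^2 = \|u(t)\|^2 - 6\, J(u(t)).$$
Combining Lemma \ref{potentialwell} ($J(u(t))\le J(u_0)$) with Poincar\'e's inequality $\|u(t)\|^2\ge\lambda_1\|u(t)\|_2^2$ yields the differential inequality
$$\phi'(t)\ge \lambda_1\phi(t)-6\, J(u_0),\qquad \phi(t):=\|u(t)\|_2^2.$$
Hypothesis \eqref{lambda1} is precisely the statement that the right-hand side is strictly positive at $t=0$. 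The one point demanding some care is to propagate this strict positivity to all $t\in[0,T)$; I would do so by setting $\psi(t):=\lambda_1\phi(t)-6J(u_0)$, observing that $\psi'(t)=\lambda_1\phi'(t)\ge\lambda_1\psi(t)$, and running a Gronwall argument (or a first-crossing contradiction) from $\psi(0)>0$ to conclude $\psi(t)>0$, hence $\phi'(t)>0$, for every $t\in[0,T)$.

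Finally, the differentiability of $t\mapsto\|u(t)\|_2$ and the inequality $\bigl(\tfrac{d}{dt}\|u(t)\|_2\bigr)^2\le -\tfrac{d}{dt}J(u(t))$ are exactly the content of Lemma \ref{cauchy}, so I would simply invoke it. No real obstacle is present; the only delicate spot is the Gronwall propagation in (i), which turns the open condition \eqref{lambda1} at $t=0$ into strict monotonicity throughout the life-span of the solution.
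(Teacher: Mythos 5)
Your proposal is correct and follows essentially the same route as the paper: the \emph{moreover} claim and parts (ii)--(iii) come directly from \eqref{obtain}, the definition of $\mathcal{N}_\pm$, and Lemma \ref{twonehari}; part (i) comes from \eqref{obtain} combined with Poincar\'e's inequality and Lemma \ref{potentialwell}; and the differentiability statement is quoted from Lemma \ref{cauchy}. The only cosmetic difference is in (i), where you substitute $J(u(t))\le J(u_0)$ at the outset and run Gronwall, while the paper keeps $J(u(t))$ and uses a first-crossing contradiction --- precisely the alternative you mention parenthetically, so the two arguments are interchangeable.
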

\begin{proof}
In view of the definition of $\mathcal{N}_\pm$ and Lemma \ref{twonehari}, \eqref{obtain} proves directly statements (ii) and
(iii) and the corresponding strict monotonicity of the map $t\mapsto\|u(t)\|_2$ whenever $u(t)\in \mathcal{N}_\pm$.\par
On the other hand, by Poincar\'e inequality, \eqref{obtain} yields
$$\frac{d}{dt}\|u(t)\|_2^2=-6J(u(t))+\|u(t)\|^2\ge-6J(u(t))+\lambda_1\|u(t)\|_2^2=:\psi(t)\ .$$
By the assumption in (i) we infer that $\psi(0)>0$ so that the map $t\mapsto\|u(t)\|_2$ is initially strictly increasing, say on some
maximal interval $(0,\delta)$ where $\delta>0$ is the first time where $\psi(\delta)=0$. If such $\delta$ exists then, by
Lemma \ref{potentialwell}, also $t\mapsto\psi(t)$ is strictly increasing on $(0,\delta)$ so that $\psi(\delta)>\psi(0)>0$, contradiction.
Therefore $\delta$ does not exist and the maximal interval of strict monotonicity for $t\mapsto\|u(t)\|_2$ coincides with $(0,T)$.\par
Finally, the differentiability of the map $t\mapsto\|u(t)\|_2$ and the estimate of its derivative follows from Lemma \ref{cauchy}.
\end{proof}

\subsubsection*{Finite time blow-up for Navier boundary conditions.}

Consider the initial-boundary value problem
\begin{equation}\label{naviernf}
\left\{\begin{array}{ll}
u_t = \det (D^2 u) - \Delta^2 u \qquad & \text{in }\Omega\times(0,T), \\
u(x,t) = \Delta u(x,t) =0 \qquad & \text{on } \partial \Omega\times(0,T), \\
u(x,0) = u_0(x) \qquad & \text{in }\Omega.
\end{array}\right.\end{equation}
We will prove that the solution to it blows up in finite time provided $u_0$ is large enough in a sense to be specified in the following.
For simplicity we focus on the radial problem set on the unit ball, $\Omega=B_1(0)$, so problem~(\ref{naviernf}) simplifies to
\begin{equation}\label{naviernfr}
\left\{\begin{array}{ll}
u_t = \frac{u_r u_{rr}}{r} - \Delta_r^2 u \qquad & \text{for }r\in[0,1)\, ,\ t>0, \\
u(1,t)=\Delta_r u(1,t) =0 \qquad & \text{for }t>0, \\
u(r,0) = u_0(r) \qquad & \text{for }r\in[0,1)
\end{array}\right.\end{equation}
where $u=u(r,t)$ and $\Delta_r(\cdot) =\frac{1}{r}[r(\cdot)_r]_r$ is the radial Laplacian. Note that
smoothness of the solution implies the symmetry condition $u_r(0,t)=0$ for all $t\ge0$ during the lapse of existence.

\begin{theorem}\label{cease}
Let $u=u(r,t)$ be a smooth solution to~\eqref{naviernfr}. If
$$
\int_0^1 \left( \frac{4}{5}r^5 -\frac{9}{4}r^4 +\frac{5}{2}r^2 \right)(u_0)_r \, dr
$$
is large enough,
then there exists a $T^* <\infty$ such that $u$ ceases to exist when $t \to T^*$.
\end{theorem}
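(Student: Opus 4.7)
The plan is to test the equation against the specific radial polynomial $\psi(r) := 4r^3 - 9r^2 + 5$, chosen so that $\phi'(r) = r\psi(r)$ and $\phi(0) = 0$. A direct computation shows $\psi(1) = 0$ and $\Delta_r\psi(r) = 36(r-1)$, so $\Delta_r\psi(1) = 0$; hence $\psi$ satisfies the Navier boundary conditions on the unit disk. The crucial fortuitous identity is $\Delta_r^2 \psi(r) = 36/r$. Introduce the functional
\[
F(t) := \int_0^1 \phi(r)\, u_r(r,t)\, dr ;
\]
integrating by parts (using $\phi(0) = 0$ and $u(1,t) = 0$) gives $F(t) = -\int_0^1 r\psi(r)\,u(r,t)\,dr$, so $F(0)$ is exactly the integral appearing in the hypothesis.

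Next, differentiate $F$ and substitute the radial equation $u_t = u_r u_{rr}/r - \Delta_r^2 u$ to obtain $F'(t) = -\int_0^1 \psi u_r u_{rr}\,dr + \int_0^1 r\psi\,\Delta_r^2 u\,dr$. Because \emph{both} $u$ and $\psi$ satisfy the Navier boundary conditions, the radial Green identity gives $\int_0^1 r\psi\,\Delta_r^2 u\,dr = \int_0^1 r u\,\Delta_r^2\psi\,dr = 36 \int_0^1 u\,dr$. For the determinant contribution, one integration by parts (the boundary terms vanishing via $\psi(1) = 0$ and smoothness at the origin $u_r(0,t) = 0$) yields $-\int_0^1 \psi u_r u_{rr}\,dr = \frac{1}{2}\int_0^1 \psi'(r) u_r^2\,dr = -3\int_0^1 r(3-2r)\,u_r^2\,dr$. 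Combining produces the key identity
\[
F'(t) = -3\int_0^1 r(3-2r)\, u_r^2(r,t)\,dr + 36 \int_0^1 u(r,t)\,dr.
\]

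Set $I(t) := \int_0^1 r(3-2r)\, u_r^2\,dr \geq 0$. Two applications of Cauchy--Schwarz yield $F(t)^2 \leq C\, I(t)$ with $C := \int_0^1 \phi(r)^2/[r(3-2r)]\,dr$ and, after using $u(1,t)=0$ to rewrite $\int_0^1 u\,dr = -\int_0^1 r u_r\,dr$, also $\bigl|\int_0^1 u\,dr\bigr| \leq \sqrt{D}\sqrt{I}$ with $D := \int_0^1 r/(3-2r)\,dr = (3\ln 3 - 2)/4$. Absorbing the cross term $36\sqrt{D}\sqrt{I}$ into the leading $-3I$ by Young's inequality gives
\[
F'(t) \leq -\frac{3}{2C}\, F(t)^2 + K
\]
for an explicit constant $K > 0$. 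A classical Riccati comparison closes the argument: any sub-solution of $y' = -ay^2 + K$ with $y(0) < -\sqrt{K/a}$ blows up to $-\infty$ in finite time, so if $|F(0)|$ exceeds the explicit threshold $\sqrt{2CK/3}$ on the appropriate side (i.e., the hypothesis integral is sufficiently large in magnitude), then $F(t) \to -\infty$ at some $T^* < \infty$. Since $\phi$ is bounded, divergence of $F$ forces the classical solution to cease to exist at $T^*$.

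The main obstacle is the Young-inequality step: the cross term must be absorbed while leaving a strictly negative coefficient in front of $F^2$, and this works only thanks to the structural coincidence that $\Delta_r^2\psi = 36/r$ places the biharmonic contribution exactly on $\int u\,dr$, whose Cauchy--Schwarz estimate is controlled by the same quadratic form $I(t)$ that dominates $F(t)^2$. This is precisely the reason why $\phi$ must take the specific polynomial form appearing in the statement, and why $\psi = \phi'/r$ must satisfy the Navier boundary conditions on $B_1$.
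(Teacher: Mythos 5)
Your proposal is correct and takes essentially the same route as the paper: the same test polynomial and functional $F(t)=\int_0^1\phi(r)\,u_r\,dr$, the same vanishing boundary terms (your Green-identity form $36\int_0^1 u\,dr$ of the biharmonic contribution equals the paper's $-36\int_0^1 u_r\,r\,dr$ after one integration by parts), the same Cauchy--Schwarz/Young absorption into the negative quadratic term, and the same Riccati inequality $F'\le -aF^2+K$ forcing $F\to-\infty$ in finite time. The only caveat --- shared with the paper's own statement --- is that ``large enough'' must be read with the correct sign ($F(0)$ sufficiently negative), which you rightly flag by saying ``on the appropriate side.''
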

\begin{proof} We begin our proof with the following identity
\begin{equation}\nonumber
\int_0^1 \left( \frac{4}{5}r^5 -\frac{9}{4}r^4 +\frac{5}{2}r^2 \right)u_r \, dr =
- \int_0^1 (4r^4 -9r^3 + 5r)u \, dr,
\end{equation}
where the integration by parts made use of the boundary condition $u(1,t)=0$ and the fact that
one of the roots of the polynomial inside the left hand side integral is located at the origin.
Now, using equation~(\ref{naviernfr}) we get
\begin{eqnarray}\nonumber
-\frac{d}{dt} \int_0^1 (4r^4 -9r^3 + 5r)u \, dr &=& -\int_0^1 (4r^3 -9r^2 + 5) u_r u_{rr} \, dr \\ \nonumber
\ & & +\int_0^1 (4r^3 -9r^2 + 5) [r(\Delta_r u)_r]_r \, dr.
\end{eqnarray}
The first integral on the right hand side can be estimated integrating by parts
\begin{equation}\nonumber
-\int_0^1 (4r^3 -9r^2 + 5) u_r u_{rr} \, dr = \int_0^1 (6r-9) (u_r)^2 \, r \, dr,
\end{equation}
where we have used the symmetry condition $u_r(0,t)=0$ and the fact that the polynomial inside the integral
on the left hand side has one root at $r=1$. We now estimate the integral
\begin{eqnarray}\nonumber
\int_0^1 (4r^3 -9r^2 + 5) [r(\Delta_r u)_r]_r \, dr &=& -\int_0^1 (12r^2 -18r) (\Delta_r u)_r \, r \, dr\\ \nonumber
&=& \int_0^1 (36r^2 -36r) \Delta_r u \, dr\\ \nonumber
&=& -36 \int_0^1 u_r \, r \, dr,
\end{eqnarray}
where the boundary terms vanish due to the presence of roots of the polynomial at the boundary points in the first case,
due to the root of the polynomial at the origin and the boundary condition $\Delta_r u(1,t)=0$ in the second case and
due to the roots of the polynomial and the symmetry condition $u_r(0,t)=0$ in the third case.

Summarizing we have
\begin{equation}\nonumber
\frac{d}{dt} \int_0^1 \left( \frac{4}{5}r^5 -\frac{9}{4}r^4 +\frac{5}{2}r^2 \right)u_r \, dr =
\int_0^1 (6r-9) (u_r)^2 \, r \, dr - 36 \int_0^1 u_r \, r \, dr.
\end{equation}
Therefore
\begin{eqnarray}\nonumber
\frac{d}{dt} \int_0^1 \left( \frac{4}{5}r^5 -\frac{9}{4}r^4 +\frac{5}{2}r^2 \right)u_r \, dr &\le& - \int_0^1 (9-6r) (u_r)^2 \, r \, dr \\ \nonumber
& & +36 \frac{C^2}{2\epsilon} + 36 \frac{\epsilon}{2} \int_0^1 (9-6r) (u_r)^2 \, r \, dr \\ \nonumber
&\le & -C \int_0^1 (9-6r) (u_r)^2 \, r \, dr + C',
\end{eqnarray}
where we have used
\begin{eqnarray}\nonumber
\int_0^1 u_r \, r \, dr &=& \int_0^1 \frac{\sqrt{9-6r}}{\sqrt{9-6r}} u_r \, r \, dr \\ \nonumber
&\le& \left( \int_0^1 \frac{1}{9-6r} r \, dr \right)^{1/2} \left( \int_0^1 (9-6r) (u_r)^2 \, r \, dr \right)^{1/2} \\ \nonumber
&\le& C \left( \int_0^1 (9-6r) (u_r)^2 \, r \, dr \right)^{1/2} \\ \nonumber
&\le& \frac{C^2}{2 \epsilon} + \frac{\epsilon}{2} \left( \int_0^1 (9-6r) (u_r)^2 \, r \, dr \right),
\end{eqnarray}
and here we have employed H\"{o}lder inequality and Young inequality in the first and third inequalities respectively.

We finish our proof with the estimate
\begin{eqnarray}\nonumber
\frac{d}{dt} \int_0^1 \left( \frac{4}{5}r^4 -\frac{9}{4}r^3 +\frac{5}{2}r \right)u_r \, r \, dr \le \\ \nonumber
- C \int_0^1 \left( \frac{4}{5}r^4 -\frac{9}{4}r^3 +\frac{5}{2}r \right)^2 (u_r)^2 \, r \, dr +C' \le \\ \nonumber
- C \left[ \int_0^1 \left( \frac{4}{5}r^4 -\frac{9}{4}r^3 +\frac{5}{2}r \right)u_r \, r \, dr \right]^2 +C',
\end{eqnarray}
where we have used that $9-6r$ is bounded from below by a positive constant and $4r^4/5 - 9r^3/4 + 5r/2$
is non-negative and bounded from above in $[0,1]$ in the first step and Jensen inequality in the second step.
This automatically implies blow-up in finite time
\begin{equation}\label{L1}
\int_0^1 \left( \frac{4}{5}r^4 -\frac{9}{4}r^3 +\frac{5}{2}r \right)u_r \, r \, dr \to - \infty \quad \text{when} \quad t \to T^{**},
\end{equation}
for $T^{**}< \infty$ and a sufficiently large initial condition. In turn, this proves that the solution ceases to exist at some time $T^{*} \le T^{**}$.
\end{proof}

\begin{remark}\label{smoothornot}
A subtle distinction should be made between solutions which ``cease to exist'' and solutions which ``blow up''. The former concerns the existence
of the smooth solution, the latter concerns the unboundedness of some norm; whence the latter implies the former. Theorem \ref{cease} merely states that the
smooth solution ceases to exist, with no statement about blow-up. In particular, if \eq{L1} held then an integration by parts would
show that the $L^1(B_1(0))$ norm of the solution would blow up.
\end{remark}

We conclude this paper with some natural questions and some open problems.\par\medskip\noindent
$\bullet$ {\bf Uniqueness and/or multiplicity of stationary solutions.}\par
By \cite{n1} we know that \eq{eqtnstat} admits the trivial solution $u\equiv0$ and also a mountain pass solution $\widetilde{u}$.
One can then wonder whether \eq{eqtnstat} also admits further solutions. Note first that the functional $J$ in \eq{J} is not even
and, therefore, standard multiplicity results are not available. In particular, $-\widetilde{u}$ is not a solution to \eq{eqtnstat}.
Does the multiplicity of solutions depend on the domain? What about radial solutions in the ball?
In this case, one can refer to some results in \cite{n2,n3}.\par\medskip\noindent
$\bullet$ {\bf Blow up in $L^p$ norms.}\par
{From} Theorems \ref{differentblowup} and \ref{finiteblup} we learn that when blow up occurs, then also the $W^{1,4}_0(\Omega)$-norm blows up.
What about the $L^p(\Omega)$-norms? Is there some critical exponent $q\in(1,\infty)$ such that the blow up in the $L^p(\Omega)$-norm
occurs if and only if $p>q$? Or does the $L^\infty(\Omega)$-norm remain bounded? And, even more interesting, what happens under Navier boundary conditions?
In this respect, see Theorem \ref{cease} and Remark \ref{smoothornot}.\par\medskip\noindent
$\bullet$ {\bf Qualitative properties of solutions.}\par
It is well-known that the biharmonic operator under Dirichlet boundary conditions
does not satisfy the positivity-preserving property in general domains, see e.g.\ \cite{ggs}. Moreover, also the biharmonic heat operator
in $\mathbb{R}^n$ does not preserve positivity and exhibits only {\em eventual local positivity}, see \cite{fgg,gg} where also nonlinear
problems are considered. For there reasons, a full positivity-preserving property for \eqref{parabolicpde} (such as $u_0\ge0$ implies $u(t)\ge0$)
cannot be expected. However, one can wonder whether \eqref{parabolicpde} has some weaker form of positivity-preserving, for instance bounds for
the negative part of $u(t)$ when $u_0\ge0$.\par\medskip\noindent
$\bullet$ {\bf Other boundary conditions.}\par
According to the physical model one wishes to describe, it could be of interest to study \eqref{parabolicpde} with different boundary conditions.
In particular, it could be interesting to consider in more detail the Navier boundary conditions $u=\Delta u=0$ on $\partial\Omega$.
For the stationary problem \eq{eqtnstat}, these conditions were studied in \cite{n2,n3,n1}. It turns out that \eq{eqtnstat} is no longer of
variational type and different techniques (such as fixed point theorems) need to be employed. Therefore, it is not clear whether an energy
functional can be defined and if the same proofs of the present paper may be applied. More generally, one could also consider
the so-called Steklov boundary conditions $u=\Delta u-au_\nu=0$ on $\partial\Omega$,
where $a\in C(\partial\Omega)$ should take into account
the mean curvature of the (smooth) boundary. We refer to \cite{ggs} for the derivation and the physical meaning of these conditions.\par\medskip\noindent
$\bullet$ {\bf Further regularity of the solution.}\par
Using the regularizing effect of the biharmonic heat operator, one could wonder which (maximal) regularity should be expected for solutions to \eqref{parabolicpde}.\par\medskip\noindent
$\bullet$ {\bf High energy initial data.}\par
Except for Theorem \ref{highenergy}, in order to prove global existence or finite time blow-up for (\ref{parabolicpde}) we assumed that $J(u_0)\le d$.
What happens for $J(u_0)>d$? Possible hints may be
found in \cite{gaz,gawe} although the lack of a comparison principle for (\ref{eqtnstat}) certainly creates more difficulties. Can the basin of attraction of the
trivial solution $u\equiv0$ be characterized more explicitly?\par\medskip\noindent
$\bullet$ {\bf Higher space dimensions.}\par
If we set the equation \eqref{parabolicpde} in some $\Omega\subset\mathbb{R}^n$ with $n\ge3$ we lose the physical application but the problem is mathematically challenging.
If $n\le4$ the embedding $W^{2,2}_0(\Omega)\subset W^{1,4}_0(\Omega)$ is still true, although for $n=4$ it becomes a critical embedding which lacks compactness.
Moreover, the embedding $W^{2,2}_0(\Omega)\subset L^\infty(\Omega)$ fails for $n\ge4$. But the most relevant problem concerns the nonlinearity $\detu$ which has the
same degree as the dimension. For instance, if $n=3$ the term $\detu$ is cubic, involving products of three second order derivatives. Since each derivative
merely belongs to $L^2(\Omega)$ (whenever $u\in W^{2,2}_0(\Omega)$), this term may not belong to any $L^p$ space. Hence, no variational approach can be used and a different
notion of solution is needed.

\vfill\eject
\noindent
{\footnotesize
Carlos Escudero\par\noindent
Departamento de Matem\'aticas\par\noindent
Universidad Aut\'onoma de Madrid\par\noindent
{\tt carlos.escudero@uam.es}\par\vskip1mm\noindent
Filippo Gazzola\par\noindent
Dipartimento di Matematica\par\noindent
Politecnico di Milano\par\noindent
{\tt filippo.gazzola@polimi.it}\par\vskip1mm\noindent
Ireneo Peral\par\noindent
Departamento de Matem\'aticas\par\noindent
Universidad Aut\'onoma de Madrid\par\noindent
{\tt ireneo.peral@uam.es}\par\vskip1mm\noindent}

\begin{thebibliography}{99}

\bibitem{ireneo1} B. Abdellaoui, A. Dall'Aglio, I. Peral, {\itshape Some remarks on elliptic problems with critical growth in the gradient},
    J. Diff. Eq. {\bfseries 222} (2006) 21--62.

\bibitem{ireneo2} B. Abdellaoui, A. Dall'Aglio, I. Peral, {\itshape Regularity and nonuniqueness results for parabolic problems arising in
    some physical models, having natural growth in the gradient}, J. Math. Pures Appl. {\bfseries 90} (2008) 242--269.

\bibitem{ar} A. Ambrosetti, P.H. Rabinowitz, \textit{Dual variational methods in critical point theory and applications}, J. Funct. Anal. {\bfseries 14},
 (1973) 349--381.

\bibitem{barabasi} A.L. Barab\'asi, H.E. Stanley, {\itshape Fractal Concepts in Surface Growth}, Cambridge University Press, Cambridge (1995).

\bibitem{BSW} M. Ben-Artzi, P. Souplet, F.B. Weissler, \textit{The local theory for viscous Hamilton–Jacobi equations in Lebesgue spaces},
Journal de Math\'ematiques Pures et Appliqu\'ees {\bf 81} (2002) 343--378.

\bibitem{BMP} L. Boccardo, F. Murat, J.-P. Puel, \textit{Existence results for some quasilinear parabolic equations},
Nonlinear Analysis: Theory, Methods \& Applications {\bf 13} (1989) 373--392.

\bibitem{coifman} R. Coifman, P. L. Lions, Y. Meyer, S. Semmes, {\it Compensated compactness and Hardy spaces}, J. Math. Pures Appl. {\bf 72} (1993) 247--286.

\bibitem{D} Der-Chen Chang, G. Dafni, E.M. Stein,  {\it Hardy Spaces, BMO, and boundary value problems for the Laplacian on a
    smooth domain in $\mathbb{R}^N$}, Trans. Amer. Math. Soc. {\bf 351} (1999)  1605--1661.

\bibitem{escudero} C. Escudero, {\itshape Geometric principles of surface growth}, Phys. Rev. Lett. {\bfseries 101} (2008) 196102.

\bibitem{n2} C. Escudero, R. Hakl, I. Peral, P.J. Torres, {\it On radial stationary solutions to a model of nonequilibrium growth},
Eur. J. Appl. Math. {\bf 24} (2013) 437--453.

\bibitem{n3} C. Escudero, R. Hakl, I. Peral, P.J. Torres, {\it Existence and nonexistence results for a singular boundary value problem arising in the theory of
epitaxial growth},  Math. Methods Appl. Sci.  37  (2014),  no. 6, 793-807.

\bibitem{elka} C. Escudero, E. Korutcheva, {\it Origins of scaling relations in nonequilibrium growth}, J. Phys. A: Math. Theor. \textbf{45} (2012) 125005, 14 pg.

\bibitem{n1} C. Escudero, I. Peral, \textit{Some fourth order nonlinear elliptic problems related to epitaxial growth}, J. Differential Equations {\bf 254} (2013) 2515--2531.

\bibitem{fgg} A. Ferrero, F. Gazzola, H.C. Grunau, {\em Decay and eventual local positivity for biharmonic parabolic
equations}, Disc. Cont. Dynam. Syst. {\bf 21} (2008) 1129--1157.

\bibitem{gagliardo} E. Gagliardo, {\em Propriet\`a di alcune classi di funzioni in pi\`u variabili}, Ricerche Mat. {\bf 7} (1958) 102--137.

\bibitem{gaz} F. Gazzola, {\em Finite time blow-up and global solutions for some nonlinear parabolic equations}, Diff. Int. Eq. {\bf 17} (2004) 983--1012.

\bibitem{gg} F. Gazzola, H.C. Grunau, {\em Eventual local positivity for a biharmonic heat equation in $\mathbb{R}^n$},
Disc. Cont. Dynam. Syst. S {\bf 1} (2008) 83--87.

\bibitem{ggs} F. Gazzola, H.C. Grunau, G. Sweers, {\em Polyharmonic boundary value problems}, LNM 1991 Springer (2010).

\bibitem{gawe} F. Gazzola, T. Weth, {\em Finite time blow-up and global solutions for semilinear parabolic equations
with initial data at high energy level}, Diff. Int. Eq. {\bf 18} (2005) 961--990.

\bibitem{GGK} B.H. Gilding, M. Guedda, R. Kersner,
{\em The Cauchy problem for $u_t = \Delta u+ |\nabla u|^q$}, Journal of Mathematical Analysis and Applications {\bf 284} (2003) 733--755.

\bibitem{grasselli} M. Grasselli, G. Mola, A. Yagi, {\em On the longtime behavior of solutions to a model for epitaxial growth},
Osaka J. Math. {\bf 48} (2011) 987--1004.

\bibitem{hale} J.K. Hale, G. Raugel, \emph{Convergence in gradient-like systems with applications to PDE}, Z. Angew. Math. Phys. {\bf 43} (1992) 63--124.

\bibitem{haraux} A. Haraux, M.A. Jendoubi, \emph{Convergence of bounded weak solutions of the wave equation with dissipation and analytic nonlinearity},
Calc. Var. Partial Differential Equations {\bf 9} (1999) 95--124.

\bibitem{jendoubi} M.A. Jendoubi, {\em A simple unified approach to some convergence theorems of L. Simon}, J. Funct. Anal. {\bf 153} (1998) 187--202.

\bibitem{kpz} M. Kardar, G. Parisi, Y.C. Zhang, {\itshape Dynamic scaling of growing interfaces}, Phys. Rev. Lett. {\bfseries 56} (1986) 889--892.

\bibitem{kohn} R.V. Kohn, T.S. Lo, {\em A new approach to the continuum modeling of epitaxial growth: slope selection, coarsening, and the role of the
uphill current}, Physica D {\bf 161} (2002) 237--257.

\bibitem{kohn1} R.V. Kohn, X. Yan, {\em Upper bound on the coarsening rate for an epitaxial growth model},
Comm. Pure Appl. Math. {\bf LVI} (2003) 1549--1564.

\bibitem{vlds} Z.W. Lai, S. Das Sarma, {\itshape Kinetic growth with surface relaxation: Continuum versus atomistic
    models}, Phys. Rev. Lett. {\bfseries 66} (1991) 2348--2351.

\bibitem{levine} H.A. Levine, {\em Some nonexistence and instability theorems for solutions of formally parabolic
equations of the form $Pu_t=-Au+F(u)$}, Arch. Rational Mech. Anal. {\bf 51} (1973) 371--386.

\bibitem{li} B. Li, J.G. Liu, {\em Epitaxial growth without slope selection: energetics, coarsening, and dynamic scaling}, J. Nonlinear Sci. {\bf 14}
(2004) 429--451.

\bibitem{marsili} M. Marsili, A. Maritan, F. Toigo, J. R. Banavar, {\itshape Stochastic growth equations and
reparametrization invariance}, Rev. Mod. Phys. {\bfseries 68} (1996) 963--983.

\bibitem{muller} S. M\"{u}ller, {\it Det=det. A remark on the distributional determinant}, C.R. Acad. Sci. Paris,
S\'{e}rie I {\bf 311} (1990) 13--17.

\bibitem{nst} W.M. Ni, P.E. Sacks, J. Tavantzis, {\em On the asymptotic behavior
of solutions of certain quasilinear parabolic equations}, J. Diff. Eq. {\bf 54} (1984) 97--120.

\bibitem{nirenberg} L. Nirenberg, {\em On elliptic partial differential equations}, Ann. Scuola Norm. Sup. Pisa {\bf 13} (1959) 115--162.

\bibitem{ps} L.E. Payne, D.H. Sattinger, {\em Saddle points and instability
of nonlinear hyperbolic equations}, Israel J. Math. {\bf 22} (1975) 273--303.

\bibitem{quitsoupl} P. Quittner, P. Souplet, {\em Superlinear parabolic problems. Blow-up, global existence and steady states}.
Birkh\"auser Advanced Texts, Basler Lehrb\"ucher. Basel (2007).

\bibitem{simon} L. Simon, {\em Asymptotics for a class of non-linear evolution
equations, with applications to geometric problems}, Ann. Math. {\bf 118} (1983) 525--571.

\bibitem{sun} T. Sun, H. Guo, M. Grant, {\itshape Dynamics of driven interfaces with a conservation law}, Phys. Rev. A
{\bfseries 40} (1989) R6763--R6766.

\bibitem{tsu} M. Tsutsumi, {\em On solutions of semilinear differential equations
in a Hilbert space}, Math. Japon. {\bf 17} (1972) 173--193.

\bibitem{villain} J. Villain, {\itshape Continuum models of crystal growth from atomic beams with and without desorption}, J.
Phys. I (France) {\bfseries 1} (1991) 19--42.

\bibitem{winkler} M. Winkler, {\itshape Global solutions in higher dimensions to a fourth-order parabolic equation modeling epitaxial thin-film growth}. Z. Angew. Math. Phys.  62  (2011),  no. 4, 575–608.
\end{thebibliography}
\end{document}